\newcommand{\DD}{\mathcal{D}}
\newcommand{\yy}{\mathbf{y}}			%compactification integer symbol
\newcommand{\xx}{\mathbf{x}}
\newcommand{\XX}{\mathbf{X}}
\newcommand{\ii}{\mathbf{i}}
\newcommand{\jj}{\mathbf{j}}
\newcommand{\kk}{\mathbf{k}}
\newcommand{\YY}{\mathbf{Y}}
\newcommand{\zero}{\mathbf{0}}
\newcommand{\Vt}{\mathcal{V}}
\newcommand{\A}{\mathcal{A}}
\newcommand{\Vy}{\mathbf{V}}
\newcommand{\Et}{\mathcal{E}}
\newcommand{\Ey}{\mathbf{E}}
\newcommand{\T}{\mathbb{T}}
\newcommand{\sg}{\gamma^{(1)}}
\newcommand{\sga}{\gamma^{*}}
\newcommand{\sgt}{\gamma^{(2)}}
\newcommand{\wt}{\widetilde}
\newcommand{\wh}{\widehat}
\newcommand{\ol}{\overline}
\newcommand{\hl}{\widehat\lambda}
\newcommand{\hld}{\widehat\lambda^{(d)}}
\newtheorem{theorem}{Theorem}[section]
\newtheorem{corollary}[theorem]{Corollary}
\newtheorem{lemma}[theorem]{Lemma}
\newtheorem{proposition}[theorem]{Proposition}
\theoremstyle{definition}
\newtheorem{remark}[theorem]{Remark}
\let\plainqed\qedsymbol
\newcommand{\claimqed}{$\lrcorner$}
\newcommand{{\LPC}}{\textbf{LPC}}
\newcommand{\x}{\mathbf{x}}
\newcommand{\y}{\mathbf{y}}
\newcommand{\Y}{\mathbf{Y}}
\newcommand{\0}{\mathbf{0}}
\newcommand{\LL}{\mathcal{L}}
\newcommand{\EE}{\mathcal{E}}
\newcommand{\eps}{\varepsilon}
\newcommand{\by}{\overline{y}}
\newcommand{\bY}{\overline{Y}}
\numberwithin{equation}{section}
\begin{document}

\begin{frontmatter}

\title{Rates of convergence to equilibrium for Potlatch and Smoothing processes}
\runtitle{Potlatch and Smoothing Processes}
\author{\fnms{Sayan} \snm{Banerjee}\ead[label=e1]{sayan@email.unc.edu}}
\thanks{SB's research was supported in part by a Junior Faculty Development Award by UNC, Chapel Hill. }
\address{Department of Statistics \\and Operations Research\\
353 Hanes Hall CB \#3260\\
University of North Carolina\\
Chapel Hill, NC 27599\\
\printead{e1}}
\affiliation{University of North Carolina, Chapel Hill}
\and
\author{\fnms{Krzysztof} \snm{Burdzy}\ead[label=e2]{burdzy@uw.edu}}
\thanks{KB's research was supported in part by Simons Foundation Grant 506732. }
\address{Department of Mathematics,\\
Box 354350,\\
University of Washington,\\
Seattle WA 98195\\
\printead{e2}}
\affiliation{University of Washington, Seattle}
\runauthor{Banerjee and Burdzy}
% Activate to display a given date or no date

%\maketitle
\begin{abstract}
We analyze the local and global smoothing rates of the smoothing process and obtain convergence rates to stationarity for the dual process known as the potlatch process. For general finite graphs, we connect the smoothing and convergence rates to the spectral gap of the associated Markov chain. We perform a more detailed analysis of these processes on the torus. Polynomial corrections to the smoothing rates are obtained. They show that local smoothing happens faster than global smoothing. These polynomial rates translate to rates of convergence to stationarity in $L^2$-Wasserstein distance for the potlatch process on $\mathbb{Z}^d$.
\end{abstract}

\begin{keyword}[class=MSC]
\kwd[Primary]{60K35}
\kwd{82C22}
\kwd[; secondary]{37A25}
\kwd{60F25}
\end{keyword}
\begin{keyword}
\kwd{potlatch process}
\kwd{meteor process}
\kwd{smoothing process}
\kwd{Wasserstein distance}
\kwd{convergence rate}
\end{keyword}

\end{frontmatter}

\section{Introduction}
We investigate the rate of convergence to equilibrium for the potlatch and smoothing processes. The  potlatch process can be described as a random mass redistribution process on a set of $n$ sites $\mathcal{I} = \{1,2,\dots,n\}$ ($n$ possibly infinite) where each site is activated according to an independent copy of a Poisson process with unit intensity. When a site activates, it redistributes its mass to all the sites in proportion to the transition kernel of a Markov chain with state space $\mathcal{I}$. For this article, we will assume that the Markov chain is reversible with respect to a probability measure $\pi$ on $\mathcal{I}$, although it is not necessary for the construction of the process. This process, under the name potlatch process, was introduced on the lattice $\mathbb{Z}^d$ ($d \ge 1$) in \cite{liggett1981ergodic} where the Markov transition kernel governing redistribution of mass was that of a homogeneous random walk on $\mathbb{Z}^d$ (see \cite{holley1981generalized} for a natural generalization of the model). The process was constructed and it was shown that there exists a unique translation invariant stationary measure $\nu_{\rho}$ for the dynamics with given expected mass per site $\rho$. It was also shown that the process started from any ergodic initial distribution (with respect to translations on $\mathbb{Z}^d$) with expected mass $\rho$ per site converges weakly to the stationary measure $\nu_{\rho}$. Although $\nu_{\rho}$ is far from explicit, it was shown (see \cite[Thm. 1.9]{liggett1981ergodic}) that the stationary means, variances and covariances for $\nu_{\rho}$ can be explicitly computed. It was observed in \cite{liggett1981ergodic} that the potlatch process has a nice dual representation called the smoothing process. 
For the smoothing process, whenever a site activates, its mass gets updated to a weighted average of the current mass at all the sites, with weights governed by the same transition kernel (see Section \ref{mainres} for a precise description). We note here that the potlatch and smoothing processes fall under the broad class of `linear systems'. See \cite[Ch. IX]{Lig85} for an accessible presentation of this material as a book chapter. Numerous aspects of such systems like localization phenomena \cite{nagahata2009localization}, central limit theorems \cite{nagahata2009central,nagahata2010note} and fixed point analysis \cite{durrett1983fixed}, to name a few, have been subsequently studied.

A related model, called the random average process has been studied in \cite{ferrari1998fluctuations,rajesh2001exact,balazs2006random} (see \cite{aldous2012lecture} for a variant of this model). More recently, a variety of models related to the smoothing process on general graphs have been used to model opinion dynamics on social networks \cite{acemouglu2013opinion,frasca2013gossips,yildiz2011discrete,shi2016evolution,ghaderi2014opinion,shah2009gossip} where the mass at each site (thought of as an agent) corresponds to the current opinion (equivalently, belief or knowledge) of the agent. Whenever the clock rings at a site, the agent interacts with other members in the community and updates her opinion according to a weighted combination of opinions of all the members. The weights model a variety of aspects like stubbornness, geographical proximity, social prominence, etc. We note here that the clock rings associated to opinion dynamics models are usually edge-based (describing times when the agents adjacent to the edge communicate) although mathematically, the edge-based dynamics and existing results on them are very similar (see \cite{acemouglu2013opinion}) to those for the smoothing process. We will exclusively use the site-based clock rings in this article.

In the context of the smoothing process (equivalently, opinion dynamics) on large finite graphs, some natural questions arise: (i) How long does it take for the smoothing process to stabilize (approach a global random equilibrium) and how does it depend on the geometry of the graph? (ii) Does the profile smooth out locally at a faster rate than the convergence rate to global equilibrium? (iii) Does the average opinion converge faster than the collective set of opinions? For the dual potlatch  process, natural questions concern rates of convergence to stationarity.

There are numerous existing results on rates of convergence to equilibrium for related interacting particle systems (both in finite and infinite volume) like the simple exclusion process \cite{ferrari1999rate,nagahata2012lower}, the zero range process \cite{janvresse1999relaxation}, the interchange process \cite{caputo2010proof,forsstrom2017spectrum}, the averaging process \cite{aldous2012lecture}, etc., to name a few. However, most of these models possess an explicit collection of stationary measures and the dynamics is reversible with respect to those. Not much is known about the stationary measure of the potlatch process beyond the first two moments and, moreover, the dynamics is not reversible with respect to it. Moreover, the state space of the mass at each site is non-compact. The dual smoothing process, which turns out to be technically slightly easier to analyze (as convergence issues can be addressed without referring to an unknown stationary measure), is not a conservative process unlike the above systems in the sense that the total mass is not conserved. In fact, the total/average mass converges asymptotically to a random variable whose value depends non-trivially on the order in which the clocks ring at different sites. These aspects make this model non-standard and technically challenging to analyze.

Recently, \cite{BBPS} studied the potlatch process on finite graphs  under the name of meteor process and \cite{burdzymeteor} revisited the lattice case. Among other things, \cite{BBPS} investigated the rate of convergence of the potlatch process on a finite graph in $L^1$-Wasserstein distance. For the discrete $d$-dimensional torus $\T_n^d := (\mathbb{Z}/n\mathbb{Z})^d$ they obtained an $O(n^2)$ bound on the relaxation time (see \cite[Thm. 3.6]{BBPS}). They further conjectured that for general finite graphs, the relaxation time should be related to the mixing time (and hence spectral gap) of the random walk on the graph. In this article, we address this conjecture. We show that for the general smoothing processes constructed from reversible transition kernels described above, the mass profile approaches a random global equilibrium (quantified by the variance functional) at an exponential rate that can be obtained explicitly in terms of the spectral gap of the Markov chain associated to the reversible kernel (Theorem \ref{global}). The proof of Theorem \ref{global} can also be used to give a quantitative bound on the local smoothing rate of the mass profile in terms of an energy functional (Corollary \ref{local}). By duality, Theorem \ref{global} directly translates to rates of convergence for the potlatch process in $L^2$-Wasserstein distance (Theorem \ref{meteorconv}). 

Next, we take a closer look at the smoothing process on the torus (with associated kernel being that of the simple random walk) starting from unit mass at zero and zero mass elsewhere. We use martingale analysis and spectral theory to give `almost matching' upper and lower bounds on the rate of decay of the expectation of the variance and energy functionals which respectively quantify the global and local smoothness of the evolving mass profile (Theorem \ref{wass}). These bounds not only capture the exponential convergence rate, but also the polynomial decay term before the exponential term. These polynomial terms are essentially independent of the size of the torus and capture the smoothing rates before time $t \le n^2$ when the exponential decay rate does not take effect. Moreover, the explicit difference in orders of these polynomial rates directly implies that local smoothing happens faster than global smoothing.  These explicit quantitative bounds rigorously establishing the difference between local and global smoothing rates address an open problem in \cite[Sec. 3.3 (i)]{aldous2012lecture} (which investigated the averaging process) in the context of the smoothing process (see the discussion after Proposition \ref{d19.10}). Theorem \ref{wass} can be used to obtain an upper bound on the rate at which the average mass of all the sites approaches the equilibrium mass (which is necessarily the same across sites) starting from an arbitrary initial mass profile (Theorem \ref{avgquick}). In particular, this bound shows that when the smoothing process starts from positive mass at a site and zero mass elsewhere, the average mass approaches equilibrium at a faster rate than the global convergence rate to equilibrium (see Remark \ref{avgrem}). Using duality, Theorem \ref{wass} can be used to give improved bounds on the rate of convergence to stationarity for the potlatch process started from certain initial configurations (Theorem \ref{metone}). These bounds contain the crucial polynomial terms (independent of $n$) that, by letting the size of the torus go to infinity, can be used to obtain polynomial rates of convergence to equilibrium in $L^2$-Wasserstein distance for local statistics of the potlatch process on $\mathbb{Z}^d$ (Theorem \ref{meteorzd}).

In this article, we introduce a number of new techniques that can potentially be used to address convergence rates to equilibrium for a class of non-reversible stochastic systems with non-compact state space. In particular, the use of appropriately chosen martingales to derive a renewal representation of the derivative of the energy functional as a convolution series (Lemma \ref{mom2delta}) and novel estimates for the associated convolutions (Lemma \ref{enderlem}) are more general than the considered models.

It would be interesting to investigate whether one can obtain lower bounds on rates of convergence to equilibrium for the smoothing and potlatch processes on general graphs in terms of the spectral gap of the associated Markov chain to complement the results in Theorems \ref{global} and \ref{meteorconv}. Another interesting direction of future research would be to obtain quantitative rates of convergence to equilibrium for the potlatch process on $\mathbb{Z}^d$ analogous to Theorem \ref{meteorzd} for more general (not necessarily translation invariant) initial configurations. We note here that results on weak convergence to stationarity (without rates) have been obtained in \cite{roussignol1980processus,cox2000convergence} for more general initial configurations.

Section \ref{mainres} defines the models rigorously and states the main results. 
Section \ref{prelim} contains a review of elementary spectral theory.
Theorems \ref{global} and \ref{meteorconv} are proved in Section \ref{revker}. Theorems \ref{wass}, \ref{avgquick}, \ref{metone} and \ref{meteorzd} are proved in Section \ref{torus}.

\subsection{Acknowledgment}
We are grateful to the anonymous referee for very helpful advice.

\subsection{Acknowledgment of priority} The second author takes this opportunity to acknowledge the priority of results on the potlatch process obtained originally in  \cite{liggett1981ergodic,holley1981generalized}. Specifically,
\cite[Thm. 5.1]{BBPS},  \cite[Thms. 3.1, 3.4, 4.1]{burdzymeteor} and a few less significant results had been proved  earlier in \cite{liggett1981ergodic,holley1981generalized}.
Sections 4, 6 and 7 of \cite{BBPS} and Sections 5 and 6 of \cite{burdzymeteor} contain results that had not been proved earlier, to our best knowledge.

\section{Model definition and main results}\label{mainres}

We start with the definitions of potlatch and smoothing processes. The constructions of these processes can be found, for example, in \cite[Ch. IX]{Lig85}.

Consider a family of ``agents'' or sites $\mathcal{I} = \{1,2,\dots,n\}$, where $n$ can be finite or infinite. In the $n=\infty$ case, the meaning of notation such as  $(X_i(t))_{1\le i \le n}$ should be clear from context.

Consider a transition matrix $P = (p_{ij})_{1\le i,j \le n}$ which is irreducible, aperiodic and reversible with respect to a probability vector $\pi = (\pi_i)_{1 \le i \le n}$, namely $\pi_i p_{ij} = \pi_jp_{ji}$ for all $1 \le i,j \le n$. We define the potlatch process on  $\mathcal{I}$ corresponding to $P$  as a random mass distribution $\{\XX(t) = (X_i(t))_{1\le i \le n}\}_{t \ge 0}$ that evolves in the following way: Start with any (possibly random) initial configuration $\XX(0) \ge 0$. Each agent has a Poisson clock attached to it, independent of $\XX(0)$ and independent of all other clocks. If the Poisson clock at $i$ rings at time $t$, the mass at $i$ gets updated to $X_i(t) = p_{ii}X_i(t-)$ and for $j \neq i$, the mass at $j$ gets updated to $X_j(t) = X_j(t-) + p_{ij}X_i(t-)$. Note that the total mass is conserved in this dynamics.

The dual process which we denote by $\{\Y(t) = (Y_i(t))_{1\le i \le n}\}_{t \ge 0}$, called the smoothing process, can be described as follows: Start from any (possibly random) initial mass distribution $\Y(0) = (Y_j(0))_{1 \le j \le n}$ with $\Y(0) \ge 0$. Attach a Poisson process to each site. 
Assume that all these Poisson processes are jointly independent and they are independent of $\Y(0)$. If the Poisson clock rings at site $k$ at time $t$, the value at site $k$ is updated to $Y_k(t) = \sum_{j=1}^np_{kj}Y_j(t-)$ and the masses at the other sites remain unchanged. For $i \in \mathcal{I}$, we will write $\Y^{(i)}$ for the smoothing process with initial mass
\begin{align}\label{d25.1}
Y^{(i)}_j(0)=\delta_i(j), \qquad j \in \mathcal{I}.
\end{align}

We will write $\DD(\,\cdot\,)$ to denote the distribution of a random object, such as a random variable or stochastic process.

\begin{remark}\label{d26.5}

Let $\mathcal{P}$ denote the collection of Poisson processes (representing clock rings at sites in $\mathcal{I}$) used to construct the potlatch process $\XX$. Use the same collection of Poisson processes $\mathcal{P}$ to construct the family of smoothing processes $\{\Y^{(i)}(\,\cdot\,) : i\in \mathcal{I}\}$ with initial distributions as in \eqref{d25.1}. Let $\wt X_i(t) = \sum_{j=1}^n X_j(0)Y^{(i)}_j(t)$, $t \ge 0$,
$i\in\mathcal{I}$, and $\wt \XX(t) = (\wt X_i(t))_{i\in \mathcal{I}}$.
From the dual representation (see \cite[Thm. 2.3]{holley1981generalized}), it follows that  for each fixed $t \ge 0$,
\begin{align}\label{d11.1}
\DD\left( \XX(t) \right) = \DD\left(\wt \XX(t) \right).
\end{align}
\end{remark}

 Let $\sgt$ denote the  spectral gap of the `two-step' Markov chain with transition matrix $P^2$ (see Section \ref{prelim} for a review of the spectral theory). 
For $\y = (y_1, y_2,\dots, y_n)$, let
\begin{align}
\by &= \sum_{i=1}^n\pi_iy_i,\notag\\
\Vy(\y) &= \sum_{i=1}^n\pi_i(y_i - \by)^2 
= \left(\sum_{i=1}^n \pi_i y_i^2 \right)- \by^2,
\label{n26.1}\\
\Vt(t) &= \Vy(\Y(t)).\label{d2.1}
\end{align}

Our first theorem gives a rate at which the global profile of the smoothing process converges to equilibrium as captured by the variance functional
$\Vt(t)$. 

\begin{theorem}\label{global}
Assume that $n<\infty$.
For any $t \ge s \ge 0$,
$$
\mathbb{E} \Vt(t) \le \mathbb{E} \Vt(s)
\exp\left(-\sgt (t-s)\right).
$$
Consequently, for any $t \ge 0$,
$$
\mathbb{E} \Vt(t) \le \mathbb{E} \Vt(0)\exp\left(-\sgt t\right).
$$
\end{theorem}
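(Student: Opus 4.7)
The plan is to derive the pointwise differential inequality $\LL\Vy(\y) \le -\sgt\, \Vy(\y)$ for the generator $\LL$ of the smoothing process, and then conclude by Dynkin's formula and Gronwall. The statement for arbitrary $s \le t$ reduces to the one starting from $s=0$ via the Markov property.

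To compute $\LL\Vy(\y)$, I would enumerate the effect of a clock ring at each site $k$. Writing $z_k := \sum_j p_{kj} y_j$ and $\y^{(k)}$ for the post-jump configuration (with $y_k$ replaced by $z_k$ and other coordinates unchanged), the new mean is $\overline{y^{(k)}} = \by + \pi_k(z_k - y_k)$, and a direct expansion yields
\begin{equation*}
\Vy(\y^{(k)}) - \Vy(\y) = \pi_k (z_k - y_k)(z_k + y_k - 2\by) - \pi_k^2 (z_k - y_k)^2.
\end{equation*}

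The key step is to recognize the sum of the first terms across $k$ as a Dirichlet-type form built from $P^2$. Setting $u_i := y_i - \by$, so that $\sum_i \pi_i u_i = 0$, and using $\sum_j p_{kj} = 1$, one checks $z_k - y_k = ((P-I)u)_k$ and $z_k + y_k - 2\by = ((P+I)u)_k$. Reversibility of $P$ with respect to $\pi$ amounts to self-adjointness of $P$ in the inner product $\langle f, g\rangle_\pi := \sum_i \pi_i f_i g_i$, so
\begin{equation*}
\sum_k \pi_k \bigl((P-I)u\bigr)_k \bigl((P+I)u\bigr)_k = \bigl\langle (P-I)u,\,(P+I)u\bigr\rangle_\pi = \bigl\langle u,\,(P^2-I)u\bigr\rangle_\pi.
\end{equation*}
Summing the jump contributions over $k$ then gives
\begin{equation*}
\LL\Vy(\y) = -\bigl\langle u,\,(I-P^2)u\bigr\rangle_\pi - \sum_k \pi_k^2 \bigl((P-I)u\bigr)_k^2 \le -\bigl\langle u,\,(I-P^2)u\bigr\rangle_\pi.
\end{equation*}
Since $u$ is orthogonal to constants in $\langle\cdot,\cdot\rangle_\pi$ and $\sgt$ is the spectral gap of the self-adjoint operator $P^2$, the spectral theorem (as reviewed in Section \ref{prelim}) yields $\langle u, (I-P^2)u\rangle_\pi \ge \sgt\, \|u\|_\pi^2 = \sgt\, \Vy(\y)$, hence $\LL\Vy(\y) \le -\sgt\,\Vy(\y)$ pointwise.

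Integrability of $\Vy(\Y(t))$ is not an issue since the updates are convex combinations and each coordinate of $\Y$ stays in $[\min_i Y_i(0),\,\max_i Y_i(0)]$. Dynkin's formula then gives $\tfrac{d}{dt}\mathbb{E}\Vt(t) \le -\sgt\,\mathbb{E}\Vt(t)$, and Gronwall delivers both claimed inequalities. The only content-bearing subtlety is the appearance of $P^2$ rather than $P$, which is forced by the quadratic nature of $\Vy$: the jump contribution factors through $(P-I)(P+I) = P^2 - I$, and this is precisely why the rate is governed by the spectral gap of $P^2$.
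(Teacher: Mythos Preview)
Your argument is correct and follows essentially the same route as the paper: compute $\LL\Vy(\y)$, recognize it as $-\Ey^{(2)}(\y)$ minus a nonnegative term, invoke the variational characterization of the spectral gap of $P^2$, and integrate the resulting differential inequality. The only difference is organizational: the paper splits $\Vy=\Vy_1-\Vy_2$ with $\Vy_1(\y)=\sum_i\pi_iy_i^2$ and $\Vy_2(\y)=\by^2$ and computes $\LL\Vy_1$, $\LL\Vy_2$ separately in coordinates, whereas you compute the per-jump increment $\Vy(\y^{(k)})-\Vy(\y)$ in one stroke and use the factorization $\langle (P-I)u,(P+I)u\rangle_\pi=\langle u,(P^2-I)u\rangle_\pi$ via self-adjointness. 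Your packaging is slightly more economical and makes the appearance of $P^2$ transparent; the paper's version has the advantage of displaying the two-step Dirichlet form $\Ey^{(2)}(\y)=\tfrac12\sum_{j,k}\pi_jp^{(2)}_{jk}(y_j-y_k)^2$ explicitly, which it reuses in Corollary~\ref{local}.
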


Theorem \ref{global}, along with duality, is used to obtain convergence rates to stationarity in $L^2$-Wasserstein distance for the potlatch process in Theorem \ref{meteorconv}. Let $d^{(2)}_W(\mu,\nu)$ denote the $L^2$-Wasserstein distance between the probability measures $\mu$ and $\nu$
(see \cite[Ch. 6]{Vill}). Write $\pi_* := \min _{1 \le j \le n}\pi_j$ and $\pi^* := \max _{1 \le j \le n}\pi_j$. Recall that we assumed that the Markov chain with transition matrix $P$ is irreducible. Hence, if $n<\infty$ then $\pi_* >0$.

\begin{theorem}\label{meteorconv}
Assume that $n<\infty$.
Starting from any (possibly random) initial configuration $\XX(0) = (X_i(0))_{1 \le i \le n}$, the potlatch process  $\XX(t)$ converges weakly to a random vector $\XX(\infty)$ as $t \rightarrow \infty$. Moreover, for any $t\ge 0$,
\begin{align}\label{d11.2}
d^{(2)}_W(\DD(\XX(t)),\DD(\XX(\infty))) \le \sqrt{\frac{2\pi^*}{\pi_*}n \ \mathbb{E}\left[\left(\sum_{j=1}^nX_j(0)\right)^2\right]}\exp\left(-\sgt t/2\right).
\end{align}
\end{theorem}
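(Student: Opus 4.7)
The plan is to combine the duality coupling from Remark~\ref{d26.5} with the exponential decay of Theorem~\ref{global}. Realizing the family $\{\Y^{(i)}\}_{i\in\mathcal{I}}$ from the same Poisson clocks (independent of $\XX(0)$), I work with $\wt X_i(t) = \sum_j X_j(0) Y^{(i)}_j(t)$. To identify the weak limit, I first observe that the reversibility of $P$ with respect to $\pi$ makes $\bar Y^{(i)}(t) := \sum_j \pi_j Y^{(i)}_j(t)$ a martingale of mean $\pi_i$. A direct jump-variance computation, using $(\sum_k p_{jk} y_k - y_j)^2 \le \sum_k p_{jk}(y_k - y_j)^2$ and reversibility, yields the pointwise bound $d\langle \bar Y^{(i)}\rangle_t/dt \le 4\pi^*\, \Vy(\Y^{(i)}(t))$, whose time integral is finite because Theorem~\ref{global} gives $\mathbb{E}\Vy(\Y^{(i)}(t)) \le \pi_i e^{-\sgt t}$. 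So $\bar Y^{(i)}(t)$ converges a.s.\ and in $L^2$ to some $W_i$, and combining this with $\Vy(\Y^{(i)}(t)) \to 0$ forces $Y^{(i)}_j(t) \to W_i$ in $L^2$ for every $j$. Consequently $\wt X_i(t) \to S W_i$ in $L^2$ with $S := \sum_j X_j(0)$, which gives the announced weak convergence of $\XX(t)$ to $\XX(\infty) \stackrel{d}{=} (S W_i)_{i\in\mathcal{I}}$.

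For the Wasserstein estimate, the coupling above yields
\begin{align*}
d^{(2)}_W(\DD(\XX(t)),\DD(\XX(\infty)))^2 \le \sum_i \mathbb{E}\bigl[(\wt X_i(t) - S W_i)^2\bigr].
\end{align*}
Using $\sum_j(X_j(0) - S\pi_j) = 0$, I split
\begin{align*}
\wt X_i(t) - S W_i = \sum_j (X_j(0) - S\pi_j)\bigl(Y^{(i)}_j(t) - \bar Y^{(i)}(t)\bigr) + S\bigl(\bar Y^{(i)}(t) - W_i\bigr).
\end{align*}
Cauchy--Schwarz with weights $\pi_j$, together with the computation $\sum_j(X_j(0) - S\pi_j)^2/\pi_j \le S^2/\pi_*$, bounds the square of the first summand by $(S^2/\pi_*)\,\Vy(\Y^{(i)}(t))$. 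Taking expectation and applying Theorem~\ref{global} with $\Vy(\Y^{(i)}(0)) = \pi_i(1 - \pi_i)$ yields a per-$i$ contribution of order $(\pi^*/\pi_*)\mathbb{E}[S^2] e^{-\sgt t}$. The second summand is controlled by the bracket estimate from step one, integrated from $t$ to $\infty$. Summing the per-$i$ bounds over the $n$ sites produces the $n$-factor, and tracking the $\pi^*, \pi_*$ constants delivers the prefactor $\sqrt{(2\pi^*/\pi_*)\, n\, \mathbb{E}[S^2]}$ and decay rate $e^{-\sgt t/2}$.

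The hardest step is handling the second summand cleanly: a naive bracket bound gives $\mathbb{E}[(\bar Y^{(i)}(t) - W_i)^2] \le 4\pi^*\pi_i e^{-\sgt t}/\sgt$, which introduces a spurious $1/\sgt$ factor absent from the theorem. Eliminating it will require either combining the two summands before estimating (exploiting the identity $\sum_i Y^{(i)}_j(t) \equiv 1$, which forces $\sum_i(\bar Y^{(i)}(t) - W_i) = 0$) or bounding $\sum_j \pi_j (Y^{(i)}_j(t) - W_i)^2$ directly via a Gronwall-type argument, rather than separating off the martingale component.
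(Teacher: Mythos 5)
Your approach is structurally different from the paper's, and the final step of your argument has a genuine gap that you yourself flag but do not close. The decomposition
$\wt X_i(t) - S W_i = \sum_j (X_j(0) - S\pi_j)\bigl(Y^{(i)}_j(t) - \bar Y^{(i)}(t)\bigr) + S\bigl(\bar Y^{(i)}(t) - W_i\bigr)$
is sound, and the first summand is controlled correctly by Cauchy--Schwarz and Theorem~\ref{global}. But the second summand really does introduce a factor $1/\sgt$: the only handle your setup provides on $\mathbb{E}\bigl[(\bar Y^{(i)}(t) - W_i)^2\bigr]$ is the bracket identity $\mathbb{E}\bigl[(\bar Y^{(i)}(t) - W_i)^2\bigr] = \mathbb{E}\bigl[\langle \bar Y^{(i)}\rangle_\infty - \langle \bar Y^{(i)}\rangle_t\bigr]$, and integrating the variance bound over $[t,\infty)$ unavoidably produces $e^{-\sgt t}/\sgt$. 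Since $\sgt$ can be as small as $O(n^{-2})$ (e.g.\ on the torus), this factor is not absorbable and your argument does not deliver \eqref{d11.2}. Your two proposed fixes are not clearly sufficient: the identity $\sum_i (\bar Y^{(i)}(t) - W_i) = 0$ is a linear constraint and does not control the sum of squares $\sum_i \mathbb{E}\bigl[(\bar Y^{(i)}(t) - W_i)^2\bigr]$, and a Gronwall argument on $\sum_j\pi_j(Y^{(i)}_j(t) - W_i)^2$ faces the same obstacle, namely that $W_i$ depends on the entire future of the process.

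The idea you are missing is pathwise monotonicity. Writing $M^{(i)}(t) = \max_j Y^{(i)}_j(t)$ and $m^{(i)}(t) = \min_j Y^{(i)}_j(t)$, the smoothing dynamics (each update replaces one coordinate by a $p_{k\cdot}$-average) makes $M^{(i)}$ non-increasing and $m^{(i)}$ non-decreasing, so $m^{(i)}(t) \le Y^{(i)}(\infty) \le M^{(i)}(t)$ for every $t$, pathwise. This gives $|Y^{(i)}_j(t) - Y^{(i)}(\infty)| \le M^{(i)}(t) - m^{(i)}(t)$, a quantity controlled by the variance functional \emph{at time $t$} (via $\mathbb{E}\bigl[(M^{(i)}(t)-m^{(i)}(t))^2\bigr] \le 2\pi_*^{-1}\mathbb{E}\Vy(\Y^{(i)}(t))$) rather than by its tail integral. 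That is what eliminates the $1/\sgt$ factor. The monotone sandwich replaces your martingale-tail estimate entirely; with it, the rest of your computation (Cauchy--Schwarz, $\Vy(\Y^{(i)}(0)) = \pi_i(1-\pi_i)\le\pi^*$, independence of $\XX(0)$ from the clocks, summing the $n$ per-site contributions) goes through and gives exactly \eqref{d11.2}.
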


\begin{remark}
Clearly, the distribution of $\XX(\infty)$ is the unique (by \eqref{d11.2}) stationary distribution for the potlatch process $\XX(t)$.
The first assertion of Theorem \ref{meteorconv} was also proved in  \cite[Thm. 3.2]{BBPS}. It was shown that  the stationary distribution is unique for a fixed total initial mass $\sum_{j=1}^nX_i(0)$. However, the only result concerning rates of convergence to stationarity for these processes on general finite graphs obtained in \cite{BBPS} is Theorem 3.4 which connects the $L^1$-Wasserstein distance to stationarity to the meeting time of independent continuous time random walks on the graph. The available estimates on these meeting times on general graphs are much weaker (see \cite[Remark 3.5]{BBPS}) than the rates of convergence obtained in Theorem \ref{meteorconv} via the duality approach. Moreover, \cite{BBPS} asked whether the convergence rates to stationarity of the potlatch process considered there can be connected to the mixing time and spectral gap of the associated random walk on the graph (see the Conjecture and discussion preceding \cite[Prop. 3.8]{BBPS}). Theorem \ref{meteorconv} provides the first result in this direction. Convergence rates for the potlatch process with $P$ corresponding to random walk on the torus, which were obtained in detail in \cite[Thm. 3.6]{BBPS}, are improved in Section \ref{torus} below for certain initial mass distributions. 
\end{remark}

Next, we present a detailed analysis of the local and global smoothing properties of the smoothing process on the torus $\T_n^d := (\mathbb{Z}/n\mathbb{Z})^d$ for odd $n$ with $P$ taken as the transition matrix of the simple random walk on this graph. 
We define the
energy functional by
\begin{align}\label{d19.8}
\Et(t) & = \frac{1}{4dn^d}\sum_{\ii \sim \jj}(Y_{\ii}(t) - Y_{\jj}(t))^2,
\end{align}
where  $\sum_{\ii \sim \jj}$ denotes sum over all ordered pairs $(\ii,\jj)$, $\ii, \jj \in \T_n^d$ such that $\ii$ and $\jj$ are neighboring vertices on the torus. The following proposition shows that, almost surely, the energy functional is non-increasing in time and thus, the smoothing process indeed ``smooths out" the mass profile in time.
\begin{proposition}\label{d19.10}
Almost surely, $t \mapsto \EE(t)$ is non-increasing in time $t$.
\end{proposition}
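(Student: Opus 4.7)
The plan is to exploit the fact that $\mathcal{E}(t)$ changes only at the (discrete, almost surely isolated) times at which some Poisson clock rings, and to show that at each such ring the energy can only weakly decrease. Between rings the coordinates $Y_\ii(t)$ are constant, so $\mathcal{E}(t)$ is constant, and it suffices to prove that each update of the smoothing process is an energy-non-increasing operation.

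Suppose the clock at site $\kk\in\T_n^d$ rings at time $t$, and write $y_\ii := Y_\ii(t-)$ and $y'_\ii := Y_\ii(t)$. Only the coordinate at $\kk$ is modified, and for $P$ equal to the transition matrix of simple random walk on $\T_n^d$ we have
\begin{equation*}
y'_\kk \;=\; \sum_{\jj} p_{\kk\jj}\, y_\jj \;=\; \frac{1}{2d}\sum_{\jj\sim\kk} y_\jj,
\qquad y'_\ii = y_\ii \text{ for } \ii\neq\kk.
\end{equation*}
Only ordered pairs $(\ii,\jj)$ with $\ii=\kk$ or $\jj=\kk$ contribute to the change of $\mathcal{E}$. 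Since each unordered edge incident to $\kk$ appears as two ordered pairs producing the same squared difference, it is enough to compare
\begin{equation*}
S(a) \;:=\; \sum_{\jj\sim\kk}(a-y_\jj)^2
\end{equation*}
at $a=y_\kk$ and at $a=y'_\kk$.

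The key (elementary) fact is that $a\mapsto S(a)$ is a quadratic in $a$ minimized precisely at the arithmetic mean $\frac{1}{2d}\sum_{\jj\sim\kk}y_\jj$, which by the update rule equals $y'_\kk$. Hence $S(y'_\kk)\le S(y_\kk)$, so
\begin{equation*}
\mathcal{E}(t)-\mathcal{E}(t-) \;=\; \frac{2}{4dn^d}\bigl(S(y'_\kk)-S(y_\kk)\bigr) \;\le\; 0.
\end{equation*}
The same argument applies to every subsequent clock ring, and, almost surely, on any bounded time interval there are only finitely many rings at vertices neighboring any given vertex, so the monotonicity passes to all $t\ge 0$.

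I do not expect a serious obstacle here; the proof amounts to recognizing that one local averaging step at a vertex is exactly the variance-minimizing projection among the choices of the coordinate at that vertex, and that the smoothing process is a concatenation of such updates interleaved with time intervals on which $\mathcal{E}$ is manifestly constant. The only minor point to be careful about is bookkeeping of ordered versus unordered pairs in the definition \eqref{d19.8} and the factor $\frac{1}{2d}$ from the transition kernel, both of which drop out cleanly.
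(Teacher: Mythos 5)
Your proof is correct and takes essentially the same route as the paper's: both compute the change in $\EE$ at a clock ring and show it is non-positive, you by invoking the variational characterization of the arithmetic mean as the minimizer of $a \mapsto \sum_{\jj\sim\kk}(a-y_\jj)^2$, the paper by completing the square directly (which yields the exact expression $-\frac{1}{2n^d}\Delta_{\kk}^2(t-)$ rather than just $\le 0$). One small nit: your closing sentence about "finitely many rings at vertices neighboring any given vertex" is not the relevant observation; the cleaner point is simply that on a finite torus there are a.s.\ finitely many clock rings in any bounded interval, so $\EE$ is piecewise constant with nonpositive jumps.
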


In the context of the averaging process, \cite[Prop. 4]{aldous2012lecture} contains an implicit bound on the expectation of the energy functional which heuristically suggested that the mass profile becomes locally smooth at a faster rate than the rate of global smoothing (we obtain an analogous bound for the smoothing process with general reversible kernels in Corollary \ref{local}). An open problem proposed in \cite[Sec. 3.3 (i)]{aldous2012lecture} was to obtain explicit bounds on the energy functional and mathematically justifying this heuristic. We provide the first mathematical justification for this heuristic in the context of the smoothing process on the torus in Theorem \ref{wass} below when started from unit mass at the origin and zero mass elsewhere. The theorem quantifies the rates of global and local smoothing of the mass profile in terms of the variance functional defined in \eqref{d2.1} and energy functional  defined in \eqref{d19.8}. The difference in orders of the polynomial decay term for the expected energy and variance functionals obtained in Theorem \ref{wass} shows that local smoothing indeed happens faster than global smoothing.

Let $\sg_1 := 1 - \cos(2\pi /n)$ denote the spectral gap of the simple random walk on $\T_n^1$ (see Section \ref{prelim} for a discussion of spectral properties of this process).

\begin{theorem}\label{wass}
Consider the smoothing process $\YY = \YY^{(\zero)}$ on $\T_n^d$. There exist constants $\alpha_1,\alpha_2,\alpha_3 >0$, $n_0 \ge 3, t_0 >0$ (all depending on $d$) such that for all $n \ge n_0$, $t \ge t_0$,
\begin{align}\label{d12.1}
\frac{\alpha_1 n^{-d}}{t^{\frac{d}{2} + 1} \wedge n^{d+2}} 
\exp\left(-2\sg_1 d^{-1} t\right) 
&\le \mathbb{E}\Et(t)\\
&\le \frac{\alpha_2 n^{-d}}{t^{\frac{d}{2} + 1} \wedge n^{d+2}} 
\exp\left(-\left(2\sg_1 d^{-1} - \frac{\alpha_3}{n^{d+4}}\right)  t\right),\notag
\end{align}
\begin{align}\label{d12.2}
\frac{\alpha_1 n^{-d}}{t^{\frac{d}{2}} \wedge n^{d}} 
\exp\left(-\left(2\sg_1 d^{-1} + \frac{\alpha_3}{n^{d+4}}\right) t\right) 
&\le \mathbb{E}\Vt(t)\\
&\le \frac{\alpha_2 n^{-d}}{t^{\frac{d}{2}} \wedge n^{d}} 
\exp\left(-\left(2\sg_1 d^{-1} - \frac{\alpha_3}{n^{d+4}}\right) t\right).\notag
\end{align}
\end{theorem}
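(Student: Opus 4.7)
The plan is to analyze both functionals via Fourier decomposition on the torus. With $\hat Y_\kk(t) = n^{-d/2}\sum_\ii e^{-2\pi i \kk\cdot\ii/n}Y_\ii(t)$ and the eigenvalues $\lambda_\kk = d^{-1}\sum_m \cos(2\pi k_m/n)$ of $P$, Parseval gives $\Vt(t) = n^{-d}\sum_{\kk\ne\zero}|\hat Y_\kk(t)|^2$ and $\Et(t) = n^{-d}\sum_\kk(1-\lambda_\kk)|\hat Y_\kk(t)|^2$. The mean $\mu(t) = \mathbb{E}\YY(t)$ satisfies $\dot\mu = (P-I)\mu$ with initial datum $\delta_\zero$, so $\hat\mu_\kk(t) = n^{-d/2}e^{-(1-\lambda_\kk)t}$. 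Applying Jensen pointwise, $\mathbb{E}(Y_\ii-\bar Y)^2 \ge (\mu_\ii-\bar\mu)^2$ and $\mathbb{E}(Y_\ii-Y_\jj)^2\ge(\mu_\ii-\mu_\jj)^2$, so
\begin{align*}
\mathbb{E}\Vt(t) &\ge V(\mu(t)) = n^{-2d}\sum_{\kk\ne\zero}e^{-2(1-\lambda_\kk)t},\\
\mathbb{E}\Et(t) &\ge \Et(\mu(t)) = n^{-2d}\sum_{\kk\ne\zero}(1-\lambda_\kk)e^{-2(1-\lambda_\kk)t}.
\end{align*}
Standard heat-kernel estimates — integral approximation over $|\kk| \lesssim n/\sqrt{t}$ for $1 \le t \le n^2$ (using $1-\lambda_\kk \approx 2\pi^2|\kk|^2/(dn^2)$), and isolating the $2d$ first-shell modes with $1-\lambda_\kk = \sg_1/d$ for $t \ge n^2$ — deliver the stated lower bounds, with the exponent correction $\alpha_3/n^{d+4}$ in the $\Vt$ bound absorbing the slack between the two regimes.

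For the upper bounds, the key is a closed renewal-type convolution equation for $A(t) := -\tfrac{d}{dt}\mathbb{E}\Et(t)$. Since each clock ring at $k$ replaces $Y_k$ by $(PY)_k$, a direct computation shows that $e^{(1-\lambda_\kk)t}\hat Y_\kk(t)$ is a complex martingale; its predictable quadratic variation (equivalently, the generator action on $|\hat Y_\kk|^2$) gives the coupled linear ODE
\begin{equation*}
\tfrac{d}{dt}\mathbb{E}|\hat Y_\kk(t)|^2 = -2(1-\lambda_\kk)\mathbb{E}|\hat Y_\kk(t)|^2 + A(t),
\end{equation*}
in which the forcing $A(t) = n^{-d}\sum_{\kk'}(1-\lambda_{\kk'})^2\mathbb{E}|\hat Y_{\kk'}(t)|^2$ is \emph{independent of} $\kk$. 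Solving by Duhamel with $|\hat Y_\kk(0)|^2 = n^{-d}$ and resubstituting yields the scalar Volterra equation $A(t) = n^{-d}K(t) + \int_0^t K(t-s)A(s)\,ds$ with kernel $K(u) = n^{-d}\sum_\kk(1-\lambda_\kk)^2 e^{-2(1-\lambda_\kk)u}$; this should be the renewal representation anticipated in Lemma \ref{mom2delta}. Iterating gives $A(t) = n^{-d}\sum_{m\ge 1}K^{*m}(t)$, and the same Duhamel calculation summed over $\kk\ne\zero$ gives $\mathbb{E}\Vt(t) = V(\mu(t)) + n^{-d}\int_0^t A(s)G(t-s)\,ds$ with $G(u) = \sum_{\kk\ne\zero}e^{-2(1-\lambda_\kk)u}$.

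The technical core is estimating $K^{*m}$ pointwise (Lemma \ref{enderlem}). A direct Fourier calculation gives the normalization $\int_0^\infty K = 1/2$, while scaling the Fourier sum shows $K(u) \asymp u^{-d/2-2}$ for $1 \le u \ll n^2$ and $K(u) \sim c\, n^{-d-4}e^{-2\sg_1 u/d}$ for $u \gtrsim n^2$. The heavy-tail one-big-jump principle then gives $K^{*m}(t) \lesssim m(1/2)^{m-1}K(t)$, so $A(t) \lesssim n^{-d}K(t)$; integrating produces $\mathbb{E}\Et(t) \lesssim n^{-d}/(t^{d/2+1}\wedge n^{d+2})$ times the exponential factor. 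For $t \ge n^2$ one bootstraps via Theorem \ref{global}: $\mathbb{E}\Et(t) \le \mathbb{E}\Et(n^2)\exp(-\sgt(t-n^2))$, noting $\sgt n^2 = O(1)$ and $\sgt = 2\sg_1/d + O(n^{-d-4})$ from the spectrum of $P^2$. The analogous upper bound for $\mathbb{E}\Vt(t)$ follows by estimating the $G$-convolution integral with the same split.

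The main obstacle is the rigorous one-big-jump control of $K^{*m}$: the kernel changes regime from polynomial to exponential decay near $u \asymp n^2$, so a naive uniform bound is far too lossy. One must split each $m$-fold convolution into regions where one factor lies in the polynomial tail and the others contribute only their total mass, then sum over $m$ while controlling the combinatorial prefactor so as to produce the multiplicative $n^{-d}$ in $A$. Tracking the sharp exponential rate $2\sg_1/d$ with only an $O(n^{-d-4})$ correction further requires isolating the contribution of the first-shell Fourier modes cleanly from the remaining modes in both the heat-kernel and the convolution estimates, and this bookkeeping is what produces the characteristic polynomial factors $t^{-d/2}$ and $t^{-d/2-1}$ in the two bounds.
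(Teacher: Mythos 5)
Your Fourier derivation of the Volterra equation $A(t) = n^{-d}K(t) + \int_0^t K(t-s)A(s)\,ds$ is correct and is exactly the paper's Lemma~\ref{mom2delta} in a different language: by Parseval your kernel $K$ equals the paper's $G$ from \eqref{l2grad}, and $A=n^{-d}H$ for the paper's $H(t)=\mathbb{E}\sum_{\ii}\Delta_{\ii}^2(t)$. Your Jensen lower bounds via the mean profile $\mu(t)=p(t,\cdot)$ are also valid, and offer a cleaner alternative to the paper's route, which obtains the $\mathbb{E}\Et$ lower bound from the first term of the convolution series and then the $\mathbb{E}\Vt$ lower bound from the Nash-type Cauchy--Schwarz step \eqref{vlb}.

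The upper bound, however, has a genuine gap at its core. The asserted one-big-jump estimate $K^{*m}(t)\lesssim m(1/2)^{m-1}K(t)$ fails once $t\gg n^{d+4}$. The Laplace transform $\widehat K(\theta)$ equals $1/2$ at $\theta=0$ and diverges at $\theta = 2\sg_1 d^{-1}$; the renewal function $\sum_m K^{*m}$ therefore decays at the critical tilt $\theta^*$ where $\widehat K(\theta^*)=1$, and $\theta^*$ is \emph{strictly less than} $2\sg_1 d^{-1}$. Concretely, for $t\gg n^{d+4}$ each additional convolution factor in $K^{*m}$ contributes an extra factor of order $n^{-d-4}t$ (the flat tails overlap along an interval of length $\sim t$), so $K^{*m}(t)\asymp K(t)(c\,n^{-d-4}t)^{m-1}$ and the putative geometric bound breaks. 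This is precisely the reason the theorem carries the correction $-\alpha_3/n^{d+4}$ in the exponent. The paper handles it by tilting: one shows $\int_0^\infty e^{(2\sg_1 d^{-1}-4\beta_2/n^{d+4})s}G(s)\,ds \le 15/16 < 1$ (Lemma~\ref{laplem}) and then proves the pointwise convolution bound \eqref{convbd} by a delicate induction that tracks where the one big jump occurs; you cannot dispense with this tilting. Second, the proposed bootstrap for $t\ge n^2$ via Theorem~\ref{global} rests on the claim that $\sgt = 2\sg_1 d^{-1} + O(n^{-d-4})$, which is false: by Remark~\ref{d12.3}, $2\sg_1 d^{-1} - \sgt_d = \frac{4-d}{d}\sin^2(\pi/n) = O(n^{-2})$ for $d\le 3$ and $= (\sg_1)^2/d^2 = O(n^{-4})$ for $d\ge 4$, both much larger than $n^{-d-4}$. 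So a bootstrap off $\sgt_d$ would yield an exponential rate strictly worse than the one in \eqref{d12.1}--\eqref{d12.2}; the sharp rate $2\sg_1 d^{-1}-\alpha_3/n^{d+4}$ must come out of the renewal-sum analysis itself.
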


\begin{remark}\label{expopt}
(i)
The exponents in the upper and lower bounds  in \eqref{d12.1}-\eqref{d12.2} are different. 
However, the ratio of the upper and lower bounds  is bounded above by a constant depending only on $d$ as long as $t\leq n^{d+4}$ (note that for large $n$, $n^{d+4}$ is much larger than the relaxation time for the simple random walk on $\T^d_n$ which is of order $n^2$; see Section \ref{prelim}).
 
 (ii) For the smoothing process associated to simple random walk on $\T^d_n$, apart from capturing the polynomial decay terms that dictate convergence rates for $t \le n^2$, Theorem \ref{wass} also furnishes improved exponential terms in the bounds, relative to Theorem \ref{global}. To see this, note that if one directly applies Theorem \ref{global} to the smoothing process (associated to simple random walk) on $\T^d_n$, one obtains an upper bound on the expected variance which decays like $\exp\left(-\sgt_d t\right)$, where $\sgt_d$ is the spectral gap of the two-step simple random walk on $\T^d_n$ (see Section \ref{prelim}). Remark \ref{d12.3} shows that for $d \le 3$, the quantities $2\sg_1/d$, $\sgt_d$ and $2\sg_1/d - \sgt_d$ are all positive and  $O(n^{-2})$ for large $n$, whereas for $d \ge 4$, $2\sg_1/d$ and $\sgt_d$ are  $O(n^{-2})$ while their difference is $O(n^{-4})$ for large $n$. Thus, the exponential decay term in the upper bound in \eqref{d12.2} decays faster than the bound in Theorem \ref{global} (for large $n$) for $t > n^2$ in the case $d \le 3$ and for $t > n^4$ in the case $d \ge 4$.
\end{remark}

Theorem \ref{wass} can be used to obtain an upper bound on the rate at which the average mass of all the sites converges to equilibrium starting from an arbitrary initial configuration. Let
\begin{align}\label{d25.2}
\overline{Y}(t) = n^{-d}\sum_{\ii \in \T_n^d}Y_\ii(t).
\end{align}
It is elementary to prove  that for the smoothing process on a finite state space, the limit 
$\overline{Y}(\infty) := \lim_{t \rightarrow \infty} \overline{Y}(t)$  exists almost surely and, moreover, the mass at each site converges almost surely to $\overline{Y}(\infty)$.

\begin{theorem}\label{avgquick}
Suppose that $\{\YY(t) : t \ge 0\}$ is a smoothing process with  (possibly random) starting configuration $Y_{\ii}(0) = y_{\ii} \in \mathbb{R}$, $\ii \in \T_n^d$. Denote the median of the values $\{y_{\ii}: \ii \in \T_n^d\}$ by $y^*$. 
Recall the constants $\alpha_2, \alpha_3$ from Theorem \ref{wass}.
There exist  constants $ n_0 \ge 3$ and $ t_0 >0$ (depending on $d$) such that for all $n \ge n_0$, $t \ge t_0$,
\begin{align}\label{d8.1}
&\mathbb{E}\left(\overline{Y}(t) - \overline{Y}(\infty)\right)^2\\
 &\le
\frac{\alpha_2}{t^{\frac{d}{2} + 1} \wedge n^{d + 2}} 
\mathbb{E}
\left[\left(n^{-d}\sum_{\ii \in \T_n^d}|y_{\ii} - y^*|\right)^2\right] 
\exp\left(-\left(2\sg_1 d^{-1} - \frac{\alpha_3}{n^{d+4}}\right) t\right)\nonumber.
\end{align}
\end{theorem}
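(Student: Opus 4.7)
The plan is to reduce the problem to a bound on $\mathbb{E}\mathcal{E}(t)$ via a clean identity, and then bound this quantity for arbitrary initial data by comparison with the single-source case treated in Theorem \ref{wass}.

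First I would establish the exact identity
$$
\mathbb{E}(\overline{Y}(t) - \overline{Y}(\infty))^2 = \frac{\mathbb{E}\mathcal{E}(t)}{n^d},
$$
valid for any (possibly random) initial configuration with finite second moments. The derivation tracks the effect of a single Poisson ring at site $m$: replacing $Y_m$ by $(PY)_m$ changes $\overline{Y}$ by $n^{-d}((PY)_m - Y_m)$, and a brief bookkeeping in the double sum defining $\mathcal{E}$ shows that $\mathcal{E}$ decreases deterministically by exactly $((PY)_m - Y_m)^2/n^d$. Since $P$ is doubly stochastic on $\T_n^d$ one has $\sum_m((PY)_m - Y_m) = 0$, so the first-order drift of $\overline{Y}^2$ vanishes (confirming the martingale property of $\overline{Y}$), and comparing the compensators gives
$$
\frac{d}{dt}\mathbb{E}\overline{Y}(t)^2 = \frac{1}{n^{2d}}\sum_m\mathbb{E}((PY(t))_m - Y_m(t))^2 = -\frac{1}{n^d}\frac{d}{dt}\mathbb{E}\mathcal{E}(t).
$$
Integrating from $t$ to $\infty$ and using $\mathcal{E}(\infty) = 0$ almost surely (since $Y_\ii(\infty) = \overline{Y}(\infty)$ for all $\ii$) together with the martingale identity $\mathbb{E}\overline{Y}(\infty)^2 - \mathbb{E}\overline{Y}(t)^2 = \mathbb{E}(\overline{Y}(t) - \overline{Y}(\infty))^2$ produces the desired identity.

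Next I would reduce $\mathbb{E}\mathcal{E}(t)$ for the arbitrary initial condition to the single-source case. By linearity of the dynamics, $Y_\jj(t) = \sum_\ii y_\ii Y^{(\ii)}_\jj(t)$, where $\{\YY^{(\ii)}\}$ are the smoothing processes started from $\delta_\ii$ and built on the common Poisson clocks. The constant-one configuration is fixed by the smoothing dynamics, so $\sum_\ii Y^{(\ii)}_\jj(t) \equiv 1$, hence $\sum_\ii(Y^{(\ii)}_\jj(t) - Y^{(\ii)}_\kk(t)) = 0$ and
$$
Y_\jj(t) - Y_\kk(t) = \sum_\ii (y_\ii - y^*)\bigl(Y^{(\ii)}_\jj(t) - Y^{(\ii)}_\kk(t)\bigr).
$$
Weighted Cauchy--Schwarz with weights $|y_\ii - y^*|$, summation over ordered neighbor pairs, and division by $4dn^d$ yield
$$
\mathcal{E}(t) \le S\sum_\ii|y_\ii - y^*|\mathcal{E}^{(\ii)}(t),\qquad S := \sum_\ii|y_\ii - y^*|.
$$
Taking expectation conditional on the initial configuration (which is independent of the driving Poisson clocks) and using translation invariance of simple random walk on $\T_n^d$ to identify $\mathbb{E}\mathcal{E}^{(\ii)}(t) = \mathbb{E}\mathcal{E}^{(\zero)}(t)$ gives $\mathbb{E}\mathcal{E}(t) \le \mathbb{E}[S^2]\,\mathbb{E}\mathcal{E}^{(\zero)}(t)$.

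To conclude, I would insert the upper bound on $\mathbb{E}\mathcal{E}^{(\zero)}(t)$ from Theorem \ref{wass}, divide by $n^d$ via the identity of the first step, and observe that $\mathbb{E}[S^2] = n^{2d}\mathbb{E}[(n^{-d}\sum_\ii|y_\ii - y^*|)^2]$, so the $n^{2d}$ and the two factors of $n^{-d}$ cancel and \eqref{d8.1} follows with the same constants $\alpha_2, \alpha_3, n_0, t_0$. The main obstacle is the identity in the first step: precisely matching the deterministic decrement of $\mathcal{E}$ at each ring with $n^d$ times the quadratic jump of $\overline{Y}^2$ is what unlocks the polynomial factor $t^{-d/2-1}$ rather than the $t^{-d/2}$ that would result from the cruder bound $d\langle\overline{Y}\rangle/ds \le 2n^{-d}\mathcal{E}(s)$ and integration.
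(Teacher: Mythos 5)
Your proof is correct, and it organizes the same ingredients in a genuinely different way than the paper does. The paper works directly with $\overline{Y}$: writing $\overline{Y}(\cdot) = y^* + n^{-d}\sum_{\ii}(y_{\ii}-y^*)Z^{(\ii)}(\cdot)$ with $Z^{(\ii)}=\sum_{\jj}Y^{(\ii)}_{\jj}$, it applies the weighted Cauchy--Schwarz inequality to the differences $Z^{(\ii)}(t)-Z^{(\ii)}(\infty)$, invokes translation invariance to collapse to $Z^{(\zero)}$, and then cites Theorem \ref{locsmooth}(i) (itself just the single-source case of your first-step identity, combined with \eqref{d12.1}) to finish. You instead first prove the identity $\mathbb{E}(\overline{Y}(t)-\overline{Y}(\infty))^2 = n^{-d}\mathbb{E}\mathcal{E}(t)$ for arbitrary initial data by matching the quadratic variation of the martingale $\overline{Y}$ with the derivative of $\mathbb{E}\mathcal{E}(t)$; this is exactly the content of Lemma \ref{heatmart} with $f\equiv 1$ combined with Lemma \ref{energen}, so it does hold in the stated generality. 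You then apply the weighted Cauchy--Schwarz inequality at the level of the energy functional, i.e.\ to the local differences $Y_{\jj}-Y_{\kk}=\sum_{\ii}(y_{\ii}-y^*)(Y^{(\ii)}_{\jj}-Y^{(\ii)}_{\kk})$, obtaining $\mathcal{E}(t)\le S\sum_{\ii}|y_{\ii}-y^*|\mathcal{E}^{(\ii)}(t)$, and reduce to $\mathcal{E}^{(\zero)}$ by translation invariance. Both routes yield the same constants because your identity is precisely $n^{-2d}$ times the single-source identity used by the paper, and the two Cauchy--Schwarz applications (global vs.\ local) commute with it. The paper's route is slightly more economical given that Theorem \ref{locsmooth}(i) is already in hand; your route isolates a clean general identity and an energy comparison, each reusable in its own right. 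One small observation in your favor: your computed decrement of $\mathcal{E}$ at a ring, namely $\Delta_m^2/n^d$, is the correct one (it agrees with $\LL\mathcal{E}=-n^{-d}\sum_{\ii}\Delta_{\ii}^2$ from \eqref{engeneq}); the final displayed line in the paper's proof of Proposition \ref{d19.10} carries a spurious extra factor of $\tfrac12$ because the ordered-pair count was dropped, though the sign conclusion there is unaffected and the correct generator formula is what the paper uses elsewhere.
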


\begin{remark}\label{avgrem}
The bound in \eqref{d8.1} is essentially sharp when the initial mass is non-zero at the origin and zero elsewhere, in the sense that we obtain an `almost' matching lower bound  (see Theorem \ref{locsmooth} (i) below). In particular, with such a starting configuration, \eqref{d12.2} and \eqref{d8.1} show that for large $n$, the quantity $\mathbb{E}\left(\overline{Y}(t) - \overline{Y}(\infty)\right)^2$ reaches a fraction of its initial value faster than the time taken by $\mathbb{E}\Vt(t)$ to reach the same fraction. In the context of opinion dynamics, this can be restated as ``the average opinion stabilizes faster than the global opinion.''
\end{remark}

Using duality, we obtain in Theorem \ref{metone} an upper bound on the rate of convergence of the potlatch process to stationarity in $L^2$-Wasserstein distance for a class of starting configurations. It improves the bound in  \cite[Thm. 3.6]{BBPS} for these starting configurations and obtains the polynomial correction to the exponential convergence rate which essentially governs convergence to stationarity till time $n^2$ before the exponential convergence rate sets in.
\begin{theorem}\label{metone}
Recall the constants $\alpha_2, \alpha_3$ from Theorem \ref{wass}.

(i) Consider the potlatch process $\{\XX(t): t \ge 0\}$ on $\T_n^d$ with starting configuration $X_{\jj}(0) = 1$ for all $\jj \in \T_n^d$. Then there exists $n_0 \ge 3$ such that for any $n \ge n_0$, $\ii \in \T_n^d$ and $t \ge 0$,
\begin{align}\label{d26.1}
d^{(2)}_W(\DD(X_{\ii}(t)),\DD(X_{\ii}(\infty))) \le 
\sqrt{\frac{\alpha_2}{t^{\frac{d}{2} + 1} \wedge n^{d+2}}}\exp\left(-\left(\sg_1 d^{-1} - \frac{\alpha_3}{2n^{d+4}}\right) t\right).
\end{align}
Moreover, 
\begin{align}\label{d26.2}
d^{(2)}_W(\DD(\XX(t)),\DD(\XX(\infty))) \le 
\sqrt{\frac{\alpha_2n^d}{t^{\frac{d}{2} + 1} \wedge n^{d+2}}}\exp\left(-\left(\sg_1 d^{-1} - \frac{\alpha_3}{2n^{d+4}}\right) t\right).
\end{align}
(ii) Consider the potlatch process $\{\XX(t): t \ge 0\}$ on $\T_n^d$ with starting configuration $X_{\jj}(0) = X_{\jj}$ for all $\jj \in \T_n^d$, where $\{X_{\jj} : \jj \in \mathbb{Z}^d\}$ are non-negative random variables for which $\operatorname{Cov}\left(X_{\jj}, X_{\kk}\right) = 0$ for $\jj \neq \kk$, $\mathbb{E}(X_{\jj}) = \mu < \infty$ for all $\jj \in \mathbb{Z}^d$ and $\zeta := \sup_{\jj \in \mathbb{Z}^d}\mathbb{E}(X_{\jj}^2) < \infty$. Then there exist $\eta>0$ and $n_0 \ge 3$ (depending on $\mu$, $\zeta$ and $d$)  such that for any $n \ge n_0$, $\ii \in \T_n^d$ and $t \ge 0$,
\begin{align}\label{d26.3}
d^{(2)}_W(\DD(X_{\ii}(t)),\DD(X_{\ii}(\infty))) \le \sqrt{\frac{\eta}{t^{\frac{d}{2}} \wedge n^{d}}}\exp\left(-\left(\sg_1 d^{-1} - \frac{\alpha_3}{2n^{d+4}}\right) t\right).
\end{align}
Moreover, 
\begin{align}\label{d26.4}
d^{(2)}_W(\DD(\XX(t)),\DD(\XX(\infty))) \le \sqrt{\frac{\eta n^d}{t^{\frac{d}{2}} \wedge n^{d}}}\exp\left(-\left(\sg_1 d^{-1} - \frac{\alpha_3}{2n^{d+4}}\right) t\right).
\end{align}
\end{theorem}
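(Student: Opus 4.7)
The plan is to reduce both parts to the smoothing-process bounds of Theorems \ref{wass} and \ref{avgquick} by means of the duality coupling of Remark \ref{d26.5}. Fix the Poisson clocks $\mathcal{P}$ and construct, simultaneously, the potlatch process $\XX$ and the family $\{\YY^{(\ii)} : \ii \in \T_n^d\}$ started from point masses. Set $\wt X_\ii(t) = \sum_{\jj} X_\jj(0)\, Y^{(\ii)}_\jj(t)$. Because each coordinate $Y^{(\ii)}_\jj(t)$ converges almost surely to the common limit $\overline{Y}^{(\ii)}(\infty)$, the quantity $\wt X_\ii(\infty) := \lim_{t \to \infty} \wt X_\ii(t) = \overline{Y}^{(\ii)}(\infty)\sum_\jj X_\jj(0)$ is well-defined a.s., and passing \eqref{d11.1} to the limit yields $\DD(\XX(\infty)) = \DD(\wt\XX(\infty))$. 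Using $(\wt\XX(t),\wt\XX(\infty))$ as a coupling of $\DD(\XX(t))$ and $\DD(\XX(\infty))$, one obtains
\begin{equation*}
d^{(2)}_W(\DD(X_\ii(t)),\DD(X_\ii(\infty)))^2 \le \mathbb{E}\bigl(\wt X_\ii(t) - \wt X_\ii(\infty)\bigr)^2,
\end{equation*}
and summing over $\ii$ yields the analogous bound for the full vector, which explains the extra $\sqrt{n^d}$ factor in \eqref{d26.2} and \eqref{d26.4}.

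For part (i), the assumption $X_\jj(0) \equiv 1$ collapses the dual expression to $\wt X_\ii(t) - \wt X_\ii(\infty) = n^d\bigl(\overline{Y}^{(\ii)}(t) - \overline{Y}^{(\ii)}(\infty)\bigr)$. The initial configuration of $\YY^{(\ii)}$ is $\delta_\ii$, whose median is $y^* = 0$ and for which $n^{-d}\sum_\jj |y_\jj - y^*| = n^{-d}$. Plugging into Theorem \ref{avgquick} and multiplying by $n^{2d}$ gives exactly $\alpha_2 / (t^{d/2+1}\wedge n^{d+2})$ times the required exponential factor, and taking square roots gives \eqref{d26.1}; \eqref{d26.2} follows by summing over the $n^d$ sites before extracting the square root.

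For part (ii), the new ingredient is the independence of $\{X_\jj\}$ from the Poisson clocks. Writing $A_\jj := Y^{(\ii)}_\jj(t) - Y^{(\ii)}_\jj(\infty)$ and conditioning on $\mathcal{P}$,
\begin{equation*}
\mathbb{E}\bigl(\wt X_\ii(t) - \wt X_\ii(\infty)\bigr)^2
 = \sum_\jj \operatorname{Var}(X_\jj)\,\mathbb{E} A_\jj^2
 + \mu^2\, \mathbb{E}\Bigl(\sum_\jj A_\jj\Bigr)^2,
\end{equation*}
since $\operatorname{Cov}(X_\jj, X_\kk) = 0$ for $\jj \ne \kk$. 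The ``mean'' contribution $\mu^2\mathbb{E}(\sum_\jj A_\jj)^2 = \mu^2 n^{2d}\mathbb{E}(\overline{Y}^{(\ii)}(t) - \overline{Y}^{(\ii)}(\infty))^2$ is handled exactly as in part (i) via Theorem \ref{avgquick} and contributes an $O(1/(t^{d/2+1} \wedge n^{d+2}))$ term. For the ``variance'' contribution, use $Y^{(\ii)}_\jj(\infty) = \overline{Y}^{(\ii)}(\infty)$ for every $\jj$ and the orthogonality $\sum_\jj (Y^{(\ii)}_\jj(t) - \overline{Y}^{(\ii)}(t)) = 0$ to decompose
\begin{equation*}
\sum_\jj A_\jj^2 \;=\; n^d \Vt^{(\ii)}(t) \;+\; n^d\bigl(\overline{Y}^{(\ii)}(t) - \overline{Y}^{(\ii)}(\infty)\bigr)^2,
\end{equation*}
where $\Vt^{(\ii)}$ is the variance functional of $\YY^{(\ii)}$. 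Applying the upper bound \eqref{d12.2} of Theorem \ref{wass} to the first summand and Theorem \ref{avgquick} to the second, one sees that the slower-decaying term $n^d \mathbb{E}\Vt^{(\ii)}(t) \lesssim \alpha_2/(t^{d/2}\wedge n^d)$ dominates, and using $\operatorname{Var}(X_\jj) \le \zeta$ gives a bound of the desired form $\eta/(t^{d/2}\wedge n^d)$ times the exponential factor, with $\eta = \eta(\mu,\zeta,d)$ absorbing $\alpha_2$ and the polynomial losses.

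The main obstacle, and the reason the proof is not a one-line application of Theorem \ref{global}, is the interplay between the two scales of decay in part (ii): the $t^{-d/2-1}$ rate coming from the average and the $t^{-d/2}$ rate coming from the variance. The decomposition of $\sum_\jj A_\jj^2$ into a pure-variance piece and a pure-average piece is what keeps these two scales cleanly separated, and one has to be slightly careful that the diagonal and off-diagonal second-moment identities for $X_\jj$ correctly route the $\mu^2$ mass to the (faster) averaging bound and the $\zeta$ mass to the (slower) variance bound. Once this is done, the extra factor of $n^d$ in \eqref{d26.4} comes, as in part (i), from summing single-coordinate bounds over all sites of $\T_n^d$.
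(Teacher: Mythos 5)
Your argument is correct and follows essentially the same route as the paper: you bound the Wasserstein distance via the duality coupling of Remark \ref{d26.5}, then decompose $\mathbb{E}(\wt X_\ii(t) - \wt X_\ii(\infty))^2$ into a variance piece (handled by \eqref{d12.2}) and an average piece (handled by the martingale $\sum_\jj Y^{(\ii)}_\jj$, equivalently Theorem \ref{avgquick}). The only organizational difference is that the paper factors these two moment computations through an auxiliary result (Theorem \ref{locsmooth} and Remark \ref{vardif}) and then quotes it, whereas you carry out the same algebra inline; the underlying decomposition and the lemmas invoked are identical.
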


By taking a limit as $n\rightarrow \infty$ in an appropriate way, Theorem \ref{metone} can be used to obtain rates of convergence to local equilibrium for the potlatch process on $\mathbb{Z}^d$. 

For $j \ge 3$ odd, let
\begin{align}\label{d27.1}
\mathbb{B}^d_j =\{(k_1,\dots,k_d) \in \mathbb{Z}^d: -(j-1)/2 \le k_i \le (j-1)/2 \text{ for } 1 \le i \le d\}.
\end{align}
We will call a function $\phi: \mathbb{R}^{\mathbb{Z}^d} \rightarrow \mathbb{R}$ Lipschitz and of bounded support if there exists odd $j \ge 3$ such that $\phi$ depends only on the coordinates in $\mathbb{B}^d_j $  and there exists $\rho>0$ such that for any $\xi_1, \xi_2 \in \mathbb{R}^{\mathbb{Z}^d}$, 
\begin{align}\label{d28.8}
|\phi(\xi_1) - \phi(\xi_2)| \le \rho\left(\sum_{\ii \in \mathbb{B}_j^d}\left(\xi_1(\ii) - \xi_2(\ii)\right)^2\right)^{1/2}.
\end{align}
 
\begin{theorem}\label{meteorzd}
Consider the potlatch process $\{\XX(t): t \ge 0\}$ on $\mathbb{Z}^d$ with starting configuration $X_{\jj}(0) = X_{\jj}$ for all $\jj \in \mathbb{Z}^d$, where $\{X_{\jj} : \jj \in \mathbb{Z}^d\}$ are non-negative random variables satisfying $\operatorname{Cov}\left(X_{\jj}, X_{\kk}\right) = 0$ for $\jj \neq \kk$, $\mathbb{E}(X_{\jj}) = \mu < \infty$ for all $\jj$, and $\sup_{\jj \in \mathbb{Z}^d}\mathbb{E}(X_{\jj}^2) < \infty$. Then $\XX(t)$ converges weakly to a stationary random mass distribution $\XX(\infty)$ as $t \rightarrow \infty$. Moreover, for any function $\phi: \mathbb{R}^{\mathbb{Z}^d} \rightarrow \mathbb{R}$ which is Lipschitz and of bounded support, there exists a positive constant $C_{\phi}$ such that
$$
d^{(2)}_W(\DD(\phi(\XX(t))),\DD(\phi(\XX(\infty)))) \le C_{\phi}t^{-d/4}, \quad t \ge 1.
$$
The bound can be improved to $C_{\phi}t^{-(d+2)/4}$ if $X_{\jj}(0) = 1$ for all $\jj \in \mathbb{Z}^d$.
\end{theorem}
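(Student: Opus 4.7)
The strategy is to transfer the torus bound of Theorem \ref{metone} to $\mathbb{Z}^d$ via a triangle-inequality argument with the torus size $n=n(t)$ tuned to $t$. The existence of $\XX(\infty)$ under the stated moment assumptions is classical \cite{liggett1981ergodic,holley1981generalized,cox2000convergence}; weak convergence can alternatively be deduced from the quantitative bound together with tightness, which follows from the dual representation and $\sup_{\ii,t}\mathbb{E}[X_\ii(t)^2]<\infty$.

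Fix $t\ge 1$ and $\phi$ Lipschitz with constant $\rho$ and support in $\mathbb{B}^d_j$. Choose $n=n(t)$ odd with $n\sim t^{3/4}$, identify $\T^d_n$ with the fundamental domain $F_n:=\{-(n-1)/2,\ldots,(n-1)/2\}^d\supset \mathbb{B}^d_j$, and let $\XX^{(n)}$ denote the potlatch process on $\T^d_n$ with initial values $\{X_\jj\}_{\jj\in F_n}$ and Poisson clocks shared with $\XX^{\mathbb{Z}^d}$ at sites in $F_n$. The triangle inequality decomposes the quantity to bound into a torus Wasserstein term (comparing $\XX^{(n)}(t)$ with $\XX^{(n)}(\infty)$) and two coupling-error terms (comparing $\XX^{\mathbb{Z}^d}(t)$ with $\XX^{(n)}(t)$, and $\XX^{\mathbb{Z}^d}(\infty)$ with $\XX^{(n)}(\infty)$).

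For the torus term, the Lipschitz bound $\mathbb{E}|\phi(\XX^{(n)}(t))-\phi(\XX^{(n)}(\infty))|^2 \le \rho^2 \mathbb{E}\sum_{\ii\in\mathbb{B}^d_j}(X^{(n)}_\ii(t)-X^{(n)}_\ii(\infty))^2$, applied under the coupling built in the proof of Theorem \ref{metone}(ii) that achieves \eqref{d26.3} uniformly in $\ii$, gives each summand of size $O(t^{-d/2})$ once $n\sim t^{3/4}$ (so $t^{d/2}\wedge n^d = t^{d/2}$ and $t/n^{d+4}\to 0$, rendering the exponential factor $O(1)$). Summing over the finite set $\mathbb{B}^d_j$ yields a torus contribution of $O(t^{-d/4})$. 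For the coupling errors, the dual representation $X^{\mathbb{Z}^d}_\ii(t)=\sum_\jj X_\jj Y^{(\ii),\mathbb{Z}^d}_\jj(t)$ and its torus analogue reduce matters to bounding $\mathbb{E}|X^{\mathbb{Z}^d}_\ii(t)-X^{(n)}_\ii(t)|^2$, which upon using zero covariance and the uniform second-moment bound becomes
\[
\zeta\sum_{\jj\notin F_n}\mathbb{E}\bigl[(Y^{(\ii),\mathbb{Z}^d}_\jj(t))^2\bigr] + \mu^2\,\mathbb{E}\Bigl[\Bigl(\sum_{\jj\notin F_n}Y^{(\ii),\mathbb{Z}^d}_\jj(t)\Bigr)^2\Bigr]
\]
plus a comparable boundary-kernel term. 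Gaussian decay at scale $\sqrt{t}$ of the heat kernel $\mathbb{E}[Y^{(\ii),\mathbb{Z}^d}_\jj(t)]$ (the continuous-time simple random walk transition probability), together with matching $O(t^{-d/2})$ second-moment bounds for $Y^{(\ii),\mathbb{Z}^d}$ obtained by passing $n\to\infty$ in the energy and variance estimates of Theorem \ref{wass}, show each such piece is $O(t^{-d/2})$ when $n\sim t^{3/4}$. Combining, the Wasserstein distance is $O(t^{-d/4})$. When $X_\jj(0)\equiv 1$, the initial conditions agree exactly between $\XX^{\mathbb{Z}^d}$ and $\XX^{(n)}$, the coupling errors vanish, and one invokes \eqref{d26.1} of Theorem \ref{metone}(i) in place of \eqref{d26.3}, improving the polynomial correction to $t^{-(d+2)/2}$ and hence the Wasserstein rate to $t^{-(d+2)/4}$.

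\textbf{Main obstacle.} The main technical hurdle is establishing Gaussian-scale second-moment estimates for the smoothing process on $\mathbb{Z}^d$ required in the coupling-error analysis; while first-moment bounds follow from classical random walk heat kernel estimates, controlling $\mathbb{E}[(Y^{(\ii),\mathbb{Z}^d}_\jj(t))^2]$ and $\mathbb{E}[(\sum_{\jj\notin F_n}Y^{(\ii),\mathbb{Z}^d}_\jj(t))^2]$ requires a careful $n\to\infty$ limit of the torus energy and variance bounds of Theorem \ref{wass}. A secondary subtlety is the joint-versus-single-site Wasserstein step on the torus: producing a valid joint coupling of $\XX^{(n)}(t)$ and $\XX^{(n)}(\infty)$ whose marginals uniformly attain the single-site bound \eqref{d26.3} requires either translation-averaging in the translation-invariant setting or a direct extraction from the coupling built in the proof of Theorem \ref{metone}(ii).
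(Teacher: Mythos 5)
Your approach—truncating to a torus of size $n(t)\sim t^{3/4}$ and applying the triangle inequality—is genuinely different from the paper's, and unfortunately it runs into two real obstacles that the paper's argument is specifically designed to sidestep. First, the triangle inequality produces the term comparing $\phi(\XX^{(n)}(\infty))$ with $\phi(\XX^{\mathbb{Z}^d}(\infty))$, and you give no mechanism to bound this. The difficulty is intrinsic: under the shared-clock coupling, the torus and lattice dual processes $\Y^{(\ii),n}$ and $\Y^{(\ii),\mathbb{Z}^d}$ track each other only up to times $\ll n^2$ (before significant mass wraps around the torus), while reaching the stationary regime on $\T^d_n$ requires times $\gtrsim n^2$. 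There is therefore no time window in which a three-term triangle inequality through an intermediate time $T$ makes all three pieces small simultaneously, so a quantitative comparison of the two stationary laws would need an entirely separate argument. Second, your coupling-error term at time $t$ requires a spatially localized second-moment bound $\sum_{\jj\notin F_n}\mathbb{E}[(Y^{(\ii),\mathbb{Z}^d}_\jj(t))^2]\to 0$ rapidly for $n\sim t^{3/4}$, i.e.\ Gaussian-scale decay of the second moments of the lattice smoothing process. Theorem~\ref{wass}, even after letting $n\to\infty$, only controls the total $\ell^2$-mass $\sum_{\jj}\mathbb{E}[(Y_\jj(t))^2]$; it says nothing about where that mass sits. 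Deriving tail decay would be a substantial new piece of analysis (martingale/heat-kernel comparison arguments), which you flag as the ``main obstacle'' but leave unresolved.

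The paper avoids both issues by never comparing the torus and $\mathbb{Z}^d$ stationary laws and never needing spatial tail estimates. Instead it bounds the temporal increment $\mathbb{E}\bigl[\bigl(\sum_{\jj}X_\jj(Y_\jj(t)-Y_\jj(s))\bigr)^2\bigr]$ uniformly for $t>s$ by using the finite speed of propagation ($Y^n_\jj(t)\to Y_\jj(t)$ a.s.\ as $n\to\infty$), Fatou's lemma, and the $n$-uniform form of the torus variance and energy bounds; this gives $O(s^{-d/2})$, hence $L^2$-Cauchyness of $\wt X_{\0}(t)$. The $L^2$ limit is identified with $X_{\0}(\infty)$ via the known weak convergence, and the Wasserstein rate drops out by sending the later time to infinity in the Cauchy estimate. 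In other words, the rate arrives from a Cauchy-type increment bound rather than from a coupling with the stationary configuration, which is why no tuning $n=n(t)$ and no tail estimates are needed. If you want to rescue the truncation strategy, the two gaps above are exactly what you would have to fill.
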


\section{Preliminaries}\label{prelim}

This section is devoted to a review of some results from the spectral theory for Markov chains.

Consider a Markov chain with a finite state space $\mathcal{I}$ and transition matrix $P$. Recall that we assume that the Markov chain is irreducible, aperiodic and  reversible with respect to $\pi$. 
Let 
\begin{align*}
1 = \lambda_0 > \lambda _1 \geq \dots \geq \lambda_{n-1} > -1
\end{align*}
be the eigenvalues of $P$ and note that  $\lambda_{n-1} > -1$ because we have assumed that the Markov chain is aperiodic (see  \cite[Lemma 12.1]{Levin2009}). 
Let $\sga$ denote the absolute spectral gap of this Markov chain, i.e., 
$\sga = 1- \lambda_*$, 
where $\lambda_* =\max\{|\lambda_i| : \lambda_i < 1\}$ (see  \cite[Sect. 12.2]{Levin2009}).
Note that $\sga>0$ since $\lambda_{n-1} > -1$. 

Since $\{\lambda_i\}_{0 \le i \le n-1}$ are the eigenvalues of $P$,  $\{\lambda_i^2\}_{0 \le i \le n-1}$ are the eigenvalues of $P^2$. This follows, for example, from the spectral representation \cite[(12.2)]{Levin2009}.
The (absolute) spectral gap $\sgt$ of the ``two-step'' Markov chain is equal to
$ 1- \lambda^2_*$ because $\lambda^2_* =\max\{|\lambda^2_i| : \lambda^2_i < 1\}$. Hence
\begin{align}\label{d22.7}
\sgt = 1- \lambda^2_* = 1 - (1- \sga)^2 = 2\sga - (\sga)^2.
\end{align}

Consider the simple random walk on $\T_n^1$ and assume that $n$ is odd. Then the process is aperiodic and its eigenvalues are $\cos(2\pi k/n)$, $k=0,1,\dots, n-1$ (see \cite[Sec. 12.3.1]{Levin2009}).

The simple random walk on $\T_n^d$ can be expressed as a product chain of $d$ simple random walk chains on $\T_n^1$ (see  \cite[Sec. 12.4]{Levin2009}).
We will write $\sg_d$ for the spectral gap of the simple random walk on $\T_n^d$ and we will denote the spectral gap of the ``two-step'' random walk on $\T_n^d$ by $\sgt_d$. We have 
\begin{align}\label{d12.4}
\sg_1 = 1 - \cos(2\pi /n).
\end{align}

 By  \cite[Lemma 12.11]{Levin2009}, the $n^d$ eigenvalues of the transition matrix for the product chain (counting multiplicities) are given by the set $\mathcal{S} =\{d^{-1}\sum_{j=1}^{d}\cos(2\pi k_j/n) : 0 \le k_1, \dots, k_d \le n-1\}$. 
It follows that $\sg_d = \sg_1/d$.
 
 As $n \ge 3$ is odd, $\min\{\cos(2\pi k/n) : 0 \le k \le n-1\} = -\cos(\pi/n)$. Using this,
$$
\max\{|\lambda| : \lambda \in \mathcal{S}, \lambda \neq 1\} = \max\left\lbrace\frac{d-1 + \cos(2\pi /n)}{d}, \cos(\pi/n)\right\rbrace.
$$
Thus,  in view of \eqref{d22.7},
\begin{align*}
\sgt_d &= 1 - \left(\max\left\lbrace\frac{d-1 + \cos(2\pi /n)}{d}, \cos(\pi/n)\right\rbrace\right)^2 \\
&= \min\left\lbrace\frac{2\sg_1}{d} - \frac{(\sg_1)^2}{d^2}, 1 - \cos^2(\pi/n)\right\rbrace.
\end{align*}
The following  can be easily checked, 
\begin{align*}
\frac{d-1 + \cos(2\pi /n)}{d} &\leq \cos(\pi/n) \qquad \text{if  }  d\leq 3, n\geq 3, n  \text{ odd},\\
\frac{d-1 + \cos(2\pi /n)}{d} &> \cos(\pi/n) \qquad \text{if  } d\geq 4, n\geq 3, n  \text{ odd}.
\end{align*}
Hence,
\begin{align}\label{d12.5}
\sgt _d
&= 1 - \cos^2(\pi/n) \qquad \text{if  }  d\leq 3, n\geq 3,  n  \text{ odd},\\
\sgt_d 
&= \frac{2\sg_1}{d} - \frac{(\sg_1)^2}{d^2} \qquad \text{if  }  d\geq 4, n\geq 3,  n  \text{ odd}.\label{d12.6}
\end{align}

\begin{remark}\label{d12.3}
It follows from \eqref{d12.4}-\eqref{d12.6} that for $d\leq 3$,
\begin{align*}
0 &\leq \frac{2\sg_1}{d} -\sgt_d = 2(1 - \cos(2\pi/n))/d -( 1 - \cos^2(\pi/n)) = \frac{4-d}{d} \sin ^2\left(\pi /n\right).
\end{align*}
We see that $2\sg_1/d$, $\sgt_d$ and their difference are all $O(n^{-2})$, for $d\leq 3$.

For $d\geq 4$,
\begin{align}\label{d10.1}
0 \leq \frac{2\sg_1}{d} -\sgt_d = \frac{(\sg_1)^2}{d^2}.
\end{align}
Both $2\sg_1/d$ and $\sgt_d$ are $O(n^{-2})$ while $(\sg_1)^2/d^2$ is $O(n^{-4})$ for large $n$. 
\end{remark}

\begin{remark}\label{d19.4}
Next we present the spectral representation of the  heat kernel on $\T_n^d$.
We will sometimes identify $\T_n^1$ with $\{0,1, \dots, n-1\}$ in our notation.

Let $p: \mathbb{R}_+ \times \T_n^d \rightarrow \mathbb{R}$ be the heat kernel of the continuous time simple random walk on $\T_n^d$ starting from $\zero$. 

Let
$\hl_{\x}^{(d)} = d^{-1}\sum_{j=1}^d(1-\cos(2\pi x_j/n))$ for $ \x =(x_1, x_2, \dots,x_n) \in \T_n^d$. For $d=1$ we will write $\hl_j = \hl^{(1)}_\x$, where $\x=\{j\}$. In other words, $\hl_j = 1-\lambda_j$ (see \cite[Lemma 1.3.3]{SC} for the relationship between the eigenvalues of the discrete time Markov chain and its continuous time version). By \eqref{d12.4}, $\hl_1 = \sg_1$.

Using the bounds  
\begin{align}\label{d16.1}
\frac{x^2}{2} - \frac{x^4}{4!} \le 1 - \cos (x) \le \frac{x^2}{2},
\qquad x \geq 0,
\end{align}
one obtains constants $C_1, C_2>0$ such that
\begin{align}\label{d11.5}
C_1\frac{j^2}{n^2} \le \hl_j \le C_2\frac{j^2}{n^2}, \qquad 1 \le j \le \lfloor n/2\rfloor, \ n\geq 3.
\end{align}
Hence,
\begin{align}\label{eigbd}
0 <
\inf_{n \ge 3}n^2\hl_1  
=\inf_{n \ge 3}n^2\sg_1  
\leq  \sup_{n \ge 3}n^2\hl_1  
= \sup_{n \ge 3}n^2\sg_1   < \infty.
\end{align}

%\textcolor{purple}{SB: There was a repetition in the above bounds as $\hl_1 = \sg_1$. I added this equality and changed the references which cited d23.2 to eigbd.}
%\textcolor{blue}{KB: I like to have  explicit formulas so I edited eigbd.}

Let $\psi_\x(\,\cdot\,)$ be an eigenfunction corresponding to $\hl^{(d)}_\x$ and assume that $\{\psi_{\x}(\,\cdot\,), \x \in \T_n^d\}$ form an orthonormal basis for $L^2(\T_n^d)$ relative to the counting measure.
In other words, for $\x\ne\y$,
\begin{align}\label{d19.5}
\sum_{\ii \in \T_n^d} \psi_{\x}(\ii)^2 =1, \qquad
\sum_{\ii \in \T_n^d} \psi_{\x}(\ii)\psi_{\y}(\ii) =0.
\end{align}
By \cite[Lemma 1.3.3]{SC},
\begin{equation}\label{d19.1}
p(t,\ii) = \frac{1}{\sqrt{n^{d}}}\sum_{\x \in \T_n^d}e^{-\hl_{\x}^{(d)}t} \psi_{\x}(\ii), \qquad t\geq 0, \ii \in \T_n^d.
\end{equation}

For $d=1$, we have the following form of the spectral representation,
\begin{equation}\label{d19.2}
p(t,\ii) = \frac{1}{\sqrt{n}}\psi_0(\ii) + \sqrt{\frac{2}{n}}\sum_{j=1}^{\lfloor n/2\rfloor}e^{-\hl_jt}\psi_j(\ii), \qquad t\geq 0, \ii \in \T_n^1,
\end{equation}
where $\psi_0(\ii) = 1/\sqrt{n}$, $\hl_j = 1-\lambda_j = 1 - \cos(2\pi j/n)$,
$\psi_j(\ii) = \sqrt{2/n}\cos(2\pi j\ii/n)$, for $ \ii \in \T_n^1$ and $1 \le j \le \lfloor n/2\rfloor$. 
\end{remark}

\section{Convergence rates for general reversible kernels}\label{revker}
In this section we prove Theorem \ref{global} and Theorem \ref{meteorconv} using spectral theory for reversible Markov chains. 
We adapt the methods in \cite{aldous2012lecture}, which addresses convergence rates for the averaging process, to the smoothing process. However, note that unlike the averaging process, the smoothing process does not conserve mass. 

We note here that the generator of the smoothing process, which we write as $\mathcal{L}$, acts on any function $f: \mathbb{R}^n \rightarrow \mathbb{R}$ by
\begin{align}\label{d19.7}
\mathcal{L} f(\y) = \sum_{i = 1}^n \left[f(\mathbf{p}^{(i)}\y) - f(\y)\right],
\end{align}
where 
\[(\mathbf{p}^{(i)}\y)_k  = \left\{ \begin{array}{ll}
		y_k & \mbox{ if } k \neq i,\\ 
		\sum_{j=1}^np_{ij}y_j      & \mbox{ if } k = i.
	\end{array} \right.\]

\begin{proof}[Proof of Theorem \ref{global}]
Recall \eqref{n26.1} and write $\Vy(\y) = \Vy_1(\y) - \Vy_2(\y)$,
where $\Vy_1(\y) = \sum_{i=1}^n \pi_i y_i^2$ and $\Vy_2(\y) = \by^2$.  Note that, using reversibility of $P$ with respect to $\pi$,
\begin{align}\label{lyap1}
\LL \Vy_1(\y) 
&=  \sum_{i = 1}^n \left[\Vy_1(\mathbf{p}^{(i)}\y) - \Vy_1(\y)\right]\\
&= \sum_{i = 1}^n \left[
\sum_{j\ne i} \pi_j y_j^2
+  \pi_i \left(\sum_{j=1}^np_{ij}y_j\right)^2
 - \sum_{j=1}^n \pi_j y_j^2\right]\notag\\
&= \sum_{i=1}^n \pi_i\left(\left(\sum_{j=1}^np_{ij}y_j\right)^2 - y_i^2\right) = \sum_{i=1}^n\sum_{j,k=1}^n\pi_i p_{ij}p_{ik}y_jy_k - \sum_{i=1}^n\pi_iy_i^2\notag\\
&= \sum_{j,k=1}^n\sum_{i=1}^n\pi_j p_{ji}p_{ik}y_jy_k - \sum_{i=1}^n\pi_iy_i^2 = \sum_{j,k=1}^n\pi_j p^{(2)}_{jk}y_jy_k - \sum_{i=1}^n\pi_iy_i^2,\notag
\end{align}
where $p^{(2)}_{jk}$ is the transition probability from $j$ to $k$ of the two-step Markov chain with transition matrix $P^2$. Reversibility  implies $\pi_jp^{(2)}_{jk} = \pi_kp^{(2)}_{kj}$ for any $1 \le j,k \le n$. Thus, 
$$
\sum_{i=1}^n\pi_iy_i^2 = \sum_{i,j=1}^n \pi_i p^{(2)}_{ij}y_i^2 = \sum_{i,j=1}^n\pi_j p^{(2)}_{ji}y_i^2
$$
which, in turn, gives us
\begin{equation}\label{lyap2}
\sum_{i=1}^n\pi_iy_i^2 = \frac{1}{2}\left(\sum_{j,k=1}^n \pi_j p^{(2)}_{jk}y_j^2 + \sum_{j,k=1}^n \pi_j p^{(2)}_{jk}y_k^2\right).
\end{equation}
Using \eqref{lyap2} in \eqref{lyap1}, we obtain
\begin{align}\label{lyap3}
\LL \Vy_1(\y) &= \sum_{j,k=1}^n\pi_j p^{(2)}_{jk}y_jy_k - \frac{1}{2}\left(\sum_{j,k=1}^n \pi_j p^{(2)}_{jk}y_j^2 + \sum_{j,k=1}^n \pi_j p^{(2)}_{jk}y_k^2\right)\\
& = -\frac{1}{2}\sum_{j,k=1}^n\pi_jp^{(2)}_{jk}\left(y_j-y_k\right)^2. \notag
\end{align}
Write $Q = (q_{ij})_{1 \le i,j\le n}$ for the  matrix  $Q := P - I$. We obtain
\begin{align}\label{lyap4}
\LL \Vy_2(\y)
&=  \sum_{i = 1}^n \left[\Vy_2(\mathbf{p}^{(i)}\y) - \Vy_2(\y)\right] \\
&= \sum_{i=1}^n\left(\left(\sum_{j\neq i}\pi_jy_j + \pi_i\sum_{k=1}^np_{ik}y_k\right)^2 - \left(\sum_{j=1}^n\pi_jy_j\right)^2\right)\notag\\
 &= \sum_{i=1}^n\left(\left(\sum_{j=1}^n\pi_jy_j + \pi_i\sum_{k=1}^nq_{ik}y_k\right)^2 - \left(\sum_{j=1}^n\pi_jy_j\right)^2\right) \notag \\
  &= \sum_{i=1}^n\left(2\left(\sum_{j=1}^n\pi_jy_j\right)\left(\pi_i\sum_{k=1}^nq_{ik}y_k\right) + \left(\pi_i\sum_{k=1}^nq_{ik}y_k\right)^2\right) \notag \\
  & = \sum_{i=1}^n\left(\pi_i\sum_{k=1}^nq_{ik}y_k\right)^2,\notag
\end{align}
where the last equality is a consequence of the fact that
\begin{align*}
\sum_{i=1}^n\pi_i\sum_{k=1}^nq_{ik}y_k &= \sum_{k=1}^n\sum_{i=1}^n\pi_iq_{ik}y_k = \sum_{k=1}^n\sum_{i=1}^n\pi_i(p_{ik} - \delta_{ik})y_k \\
&= \sum_{k=1}^n\left(\sum_{i=1}^n\pi_kp_{ki}y_k - \pi_ky_k\right) = 0.
\end{align*}
Combining \eqref{lyap3} and \eqref{lyap4}, we obtain
\begin{equation}\label{lyap5}
\LL \Vy(\y) = \LL \Vy_1(\y) - \LL \Vy_2(\y) = -\frac{1}{2}\sum_{j,k=1}^n\pi_jp^{(2)}_{jk}\left(y_j-y_k\right)^2 - \sum_{i=1}^n\left(\pi_i\sum_{k=1}^nq_{ik}y_k\right)^2.
\end{equation}
Define the `two-step' energy functional 
\begin{align}\label{d10.3}
\Ey^{(2)}(\y) = \frac{1}{2}\sum_{j,k=1}^n\pi_jp^{(2)}_{jk}\left(y_j-y_k\right)^2.
\end{align}
The variational representation for the spectral gap $\sgt$ of the `two-step' Markov chain on $\mathcal{I}$ with transition matrix $P^2$ (\cite[Remark 13.13]{Levin2009}) implies that
$$
\sgt \leq \inf_{\y \neq 0} \frac{\Ey^{(2)}(\y)}{\Vy(\y)}.
$$
Using this in \eqref{lyap5},
\begin{equation}\label{locref}
\LL \Vy(\y) \le -\Ey^{(2)}(\y) \le -\sgt \Vy(\y).
\end{equation}
Hence,
$$
\frac{d}{dt}\mathbb{E} \Vt(t) \le -\sgt \mathbb{E} \Vt(t),
$$
which gives for any $t \ge s \ge 0$,
$$
\mathbb{E} \Vt(t) \le e^{-\sgt (t-s)}\mathbb{E} \Vt(s).
$$
\end{proof}

Corollary \ref{local} given below shows that
the proof of Theorem \ref{global} yields an implicit bound on the rate at which the mass profile becomes ``locally smooth.'' This is expressed in terms of the `two-step' energy functional \eqref{d10.3}. 
Let
\begin{align*}
\Ey^*(\y) &= \sum_{i=1}^n\left(\pi_i\sum_{k=1}^nq_{ik}y_k\right)^2,\qquad
\Et^{(2)}(t) = \Ey^{(2)}(\Y(t)), \qquad 
\Et^{*}(t) = \Ey^{*}(\Y(t)).
\end{align*}

\begin{corollary}\label{local}
We have
$$
\int_0^{\infty}\mathbb{E}\left(\Et^{(2)}(t) + \Et^{*}(t)\right)dt = \mathbb{E} \Vt(0).
$$
\end{corollary}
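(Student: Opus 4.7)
The plan is to extract from the proof of Theorem \ref{global} the pointwise identity that was implicit in equation \eqref{lyap5}, namely
$$
\LL \Vy(\y) \;=\; -\Ey^{(2)}(\y) - \Ey^*(\y),
$$
which holds for every $\y \in \mathbb{R}^n$ by the two displayed calculations \eqref{lyap3} and \eqref{lyap4}. Once this is in hand, Corollary \ref{local} amounts to an application of Dynkin's formula followed by taking $t \to \infty$, with Theorem \ref{global} providing the needed control on the boundary term.

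In detail, I would first argue that $\Vt(t)$ is uniformly bounded in $t$. Because every update in the smoothing process replaces some coordinate $Y_k$ by the convex combination $\sum_j p_{kj}Y_j$, we have $\max_i Y_i(t) \leq \max_i Y_i(0)$ almost surely for all $t \geq 0$, hence $\Vt(t) \leq (\max_i Y_i(0))^2$ uniformly in $t$. Together with the finiteness of $n$, this places us in the standard regime where Dynkin's formula applies: the process
$$
M_t := \Vt(t) - \Vt(0) - \int_0^t \LL\Vy(\Y(s))\, ds \;=\; \Vt(t) - \Vt(0) + \int_0^t \bigl(\Et^{(2)}(s) + \Et^*(s)\bigr)\, ds
$$
is a martingale. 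Taking expectations yields
$$
\int_0^t \mathbb{E}\bigl(\Et^{(2)}(s) + \Et^*(s)\bigr)\, ds \;=\; \mathbb{E}\Vt(0) - \mathbb{E}\Vt(t).
$$

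To finish, I would let $t \to \infty$. By Theorem \ref{global}, $\mathbb{E}\Vt(t) \leq e^{-\sgt t}\mathbb{E}\Vt(0) \to 0$, so the right-hand side converges to $\mathbb{E}\Vt(0)$. The integrand on the left is nonnegative, so monotone convergence gives
$$
\int_0^\infty \mathbb{E}\bigl(\Et^{(2)}(s) + \Et^*(s)\bigr)\, ds \;=\; \mathbb{E}\Vt(0),
$$
which is the claim.

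No step here is genuinely an obstacle; the only place that warrants a careful word is the justification that $t \mapsto \Vt(t) - \int_0^t \LL\Vy(\Y(s))\,ds$ is a true martingale (not just a local one), but the uniform bound $\Vt(t) \leq (\max_i Y_i(0))^2$ combined with the finite jump rate $n$ of the underlying Poisson clocks makes this standard.
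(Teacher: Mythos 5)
Your proof is correct and takes essentially the same route as the paper: both rest on the identity $\LL\Vy(\y) = -\Ey^{(2)}(\y) - \Ey^{*}(\y)$ extracted from \eqref{lyap5}, then integrate (you via Dynkin, the paper via $\frac{d}{dt}\mathbb{E}\Vt(t) = -\mathbb{E}(\Et^{(2)}(t)+\Et^{*}(t))$) and send $t\to\infty$ using Theorem \ref{global}. Your added justification that the Dynkin process is a true martingale, via the a.s.\ bound $\Vt(t)\le(\max_i Y_i(0))^2$ and finite jump rate, is a reasonable precaution the paper leaves implicit.
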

\begin{proof}
From \eqref{lyap5},
$$
\frac{d}{dt}\mathbb{E} \Vt(t) = - \mathbb{E}\left(\Et^{(2)}(t) + \Et^{*}(t)\right).
$$
This, along with the observation $\lim_{t \rightarrow \infty} \mathbb{E} \Vt(t) = 0$, which immediately follows from Theorem \ref{global}, implies the result.
\end{proof}
\begin{remark}
The above bound suggests at a heuristic level that local smoothing happens at a faster rate than global smoothing. This was also observed in \cite[Sect. 2.3]{aldous2012lecture}. Theorem \ref{wass} verifies this heuristic for the smoothing process on the torus.
\end{remark}
Using Theorem \ref{global} and duality, we now prove Theorem \ref{meteorconv}.
\begin{proof}[Proof of Theorem \ref{meteorconv}]
Recall the process $\Y^{(i)}$ defined in \eqref{d25.1}. Writing $M^{(i)}(t) = \max_{1 \le j \le n}Y^{(i)}_j(t)$ and $m^{(i)}(t) = \min_{1 \le j \le n}Y^{(i)}_j(t)$, note that the dynamics of the smoothing process implies that $M^{(i)}(t)$ is non-increasing in time and $m^{(i)}(t)$ is non-decreasing in time. Recall the function $\Vy$ from  \eqref{n26.1} and let $\bY^{(i)}(t) = \sum_{j=1}^n\pi_jY^{(i)}_j(t)$, $t \ge 0$. 
Note that $\overline{Y}^{(i)}(0) = \pi_i$ and, therefore,
$$
\Vy(\Y^{(i)}(0)) = \pi_i(1-\pi_i)^2 + \sum_{j \neq i} \pi_j(-\pi_i)^2 = \pi_i(1-\pi_i)^2 + (1-\pi_i)\pi_i^2 = \pi_i(1-\pi_i)\le \pi^*.
$$
We apply this formula and Theorem \ref{global} to see that,
\begin{align}\label{mc1}
\mathbb{E}&\left[\left(M^{(i)}(t) - m^{(i)}(t)\right)^2 \right]\\
&\le 2\left(\mathbb{E}\left[\left(M^{(i)}(t) - \bY^{(i)}(t)\right)^2\right] + \mathbb{E}\left[\left(m^{(i)}(t) - \bY^{(i)}(t)\right)^2\right]\right)\notag\\
&\le 2\mathbb{E}\sum_{j=1}^n\left(Y^{(i)}_j(t)- \bY^{(i)}(t)\right)^2 
\le 2\pi_{*}^{-1}\mathbb{E}\sum_{j=1}^n\pi_{j}\left(Y^{(i)}_j(t)- \bY^{(i)}(t)\right)^2\notag\\
 &= 2\pi_{*}^{-1}\mathbb{E}\Vy(\Y^{(i)}(t))
 \le 2\pi_{*}^{-1}e^{-\sgt t}\mathbb{E}\Vy(\Y^{(i)}(0)) \le 2\frac{\pi^*}{\pi_*}e^{-\sgt t}.\notag
\end{align}

Thus, as $t \rightarrow \infty$, $\Y^{(i)}(t)$ converges almost surely to the vector $Y^{(i)}(\infty)\mathbbm{1}$ for some non-negative random variable $Y^{(i)}(\infty)$. Recall $\{\Y^{(i)}(\,\cdot\,) : 1 \le i \le n\}$ and $\wt \XX(t)$ from Remark \ref{d26.5}.
Let
$\wt X_i(\infty)=Y^{(i)}(\infty)\sum_{j=1}^n X_j(0)$ for $1 \le i \le n$. From the almost sure convergence of $\Y^{(i)}(\,\cdot\,)$ we obtain $\wt X_i(t) \rightarrow \wt X_i(\infty)$ as $t \rightarrow \infty$,
for  $1 \le i \le n$, a.s. By duality \eqref{d11.1}, for any $t \ge 0$, $\wt \XX(t)$ has the same law as $\XX(t)$. This implies the weak convergence of $\XX(t)$ to $\XX(\infty):=\wt \XX(\infty):= (\wt X_i(\infty))_{1 \le i \le n}$ proving the first claim in the theorem.
 Using the observation that for each $i$, $m^{(i)}(t) \le Y^{(i)}(\infty) \le M^{(i)}(t)$ for all $t \ge 0$, we obtain
\begin{align}\label{d25.3}
&\left[d^{(2)}_W(\DD(\XX(t)),\DD(\XX(\infty)))\right]^2
= \left[d^{(2)}_W(\DD(\wt\XX(t)),\DD(\wt\XX(\infty)))\right]^2\\
& \le \mathbb{E}\sum_{i=1}^{n}\left| \sum_{j=1}^n X_j(0)Y^{(i)}_j(t) - Y^{(i)}(\infty)\sum_{j=1}^n X_j(0)\right|^2\notag\\
& \le \mathbb{E}\sum_{i=1}^{n}\left(\sum_{j=1}^nX_j(0)\left|Y^{(i)}_j(t) - Y^{(i)}(\infty)\right| \right)^2\notag\\
& \le \mathbb{E}\sum_{i=1}^{n}\left(\sum_{j=1}^nX_j(0)\left|M^{(i)}(t) - m^{(i)}(t)\right|\right)^2\notag\\
&= \mathbb{E}\left[\left(\sum_{j=1}^nX_j(0)\right)^2\right]
\mathbb{E}\left(\sum_{i=1}^{n}\left(M^{(i)}(t) - m^{(i)}(t)\right)^2\right)
\notag\\
&  \le \frac{2\pi^*}{\pi_*}n\  \mathbb{E}\left[\left(\sum_{j=1}^nX_j(0)\right)^2 \right] \exp\left(-\sgt t\right),\notag
\end{align}
where we have used the independence between  $\{X_j(0)\}_{1 \le j \le n}$ and the smoothing process $\Y^{(i)}$ for the last equality, and \eqref{mc1} in the last inequality. 
\end{proof}

\section{Convergence rates for the smoothing and potlatch processes on the torus and lattice}\label{torus}
In this section, we investigate the smoothing process on the torus $\T_n^d := (\mathbb{Z}/n\mathbb{Z})^d$ for odd $n$ with $P$ taken as the transition matrix of the simple random walk on this graph. 
In the following, $\jj \sim \ii$ will indicate that $\ii$ and $\jj$ are neighbors in $\T_n^d$. 

\begin{proof}[Proof of Proposition \ref{d19.10}]
Recall that, almost surely, at most one Poisson clock  rings at any time. Suppose the clock at the $\ii$-th vertex rings at (random) time $t$. Then
\begin{align*}
&\EE(t) - \EE(t-) = \frac{1}{4dn^d}\sum_{\jj :\jj \sim \ii}\left[\left(\frac{1}{2d}\sum_{\kk : \kk \sim \ii} Y_{\kk}(t-) - Y_{\jj}(t-)\right)^2 - (Y_{\ii}(t-) - Y_{\jj}(t-))^2 \right]\\
&= \frac{1}{4dn^d}\sum_{\jj :\jj \sim \ii}\left[\left(\frac{1}{2d}\sum_{\kk : \kk \sim \ii} (Y_{\kk}(t-) - Y_{\ii}(t-)) + (Y_{\ii}(t-) - Y_{\jj}(t-))\right)^2\right.\\
& \qquad \qquad \qquad  \left. - (Y_{\ii}(t-) - Y_{\jj}(t-))^2 \right]\\
&=\frac{1}{2dn^d}\sum_{\jj :\jj \sim \ii}\left(\frac{1}{2d}\sum_{\kk : \kk \sim \ii} (Y_{\kk}(t-) - Y_{\ii}(t-))\right)(Y_{\ii}(t-) - Y_{\jj}(t-))\\
&\qquad \qquad \qquad + \frac{1}{4dn^d}\sum_{\jj :\jj \sim \ii}\left(\frac{1}{2d}\sum_{\kk : \kk \sim \ii} Y_{\kk}(t-) - Y_{\ii}(t-)\right)^2\\
& = -\frac{1}{n^d} \left(\frac{1}{2d}\sum_{\kk : \kk \sim \ii} Y_{\kk}(t-) - Y_{\ii}(t-)\right)^2 
+ \frac{1}{2n^d} \left(\frac{1}{2d}\sum_{\kk : \kk \sim \ii} Y_{\kk}(t-) - Y_{\ii}(t-)\right)^2\\
&= -\frac{1}{2n^d} \left(\frac{1}{2d}\sum_{\kk : \kk \sim \ii} Y_{\kk}(t-) - Y_{\ii}(t-)\right)^2 \le 0.
\end{align*}
This implies the result.
\end{proof}

Define the discrete Laplacian of $F: \mathbb{R}_+ \times \T_n^d \rightarrow \mathbb{R}$ by 
$$
(\Delta F)(t,\ii) = \frac{1}{2d}\sum_{\jj \sim \ii}F(t,\jj) - F(t,\ii), \quad  t \in \mathbb{R}_+, \ \ii \in \T_n^d.
$$
We will use the following notation, $\Delta_{\ii}(t) = (\Delta \Y)(t,\ii)$, i.e.,
\begin{align}\label{d19.9}
\Delta_{\ii}(t) =  \frac{1}{2d}\sum_{\jj \sim \ii}Y_{\jj}(t) - Y_{\ii}(t).
\end{align}
By the discrete integration by parts formula, for any function $f: \mathbb{R}_+ \times \T_n^d \rightarrow \mathbb{R}$, $u \in \mathbb{R}_+$, and $s \in [0,u]$,
\begin{align}\label{d11.4}
\sum_{\ii \in \T_n^d}f(u-s, \ii)\Delta_{\ii}(s) = \sum_{\ii \in \T_n^d}\Delta f(u-s, \ii)Y_{\ii}(s).
\end{align}

\begin{lemma}\label{heatmart}
For any solution $f: \mathbb{R}_+ \times \T_n^d \rightarrow \mathbb{R}$ to the heat equation $\partial_tf = \Delta f$ and any $u>0$, the process
$$
M_f(t) := \sum_{\ii \in \T_n^d}f(u-t, \ii)Y_{\ii}(t), \quad 0 \le t \le u,
$$
is a continuous time martingale with predictable quadratic variation
$$
\langle M \rangle_f(t) = \int_0^t \sum_{\ii \in \T_n^d}f^2(u-s, \ii)\Delta^2_{\ii}(s)ds.
$$
\end{lemma}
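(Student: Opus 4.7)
The plan is to apply Dynkin's formula for time-dependent functions to
$g(t, \y) := \sum_{\ii \in \T_n^d} f(u-t, \ii)\, y_{\ii}$,
using the generator $\LL$ of the smoothing process recorded in \eqref{d19.7}. Standard Markov jump process theory then gives that
$M_f(t) - M_f(0) - \int_0^t (\partial_s g + \LL g)(s, \YY(s))\,ds$
is a local martingale; I would first verify the drift identity $\partial_t g + \LL g \equiv 0$ so that $M_f$ itself is a local martingale, and then extract the predictable quadratic variation via the carré du champ operator.

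For the drift cancellation, the heat equation $\partial_s f = \Delta f$ applied to the time variable of $f(u-t, \ii)$ gives
$$\partial_t g(t, \y) = -\sum_{\ii \in \T_n^d} (\Delta f)(u-t, \ii)\, y_{\ii}.$$
On the other hand, since $\mathbf{p}^{(\ii)}\y$ differs from $\y$ only in the $\ii$-th coordinate, where it takes the value $(2d)^{-1}\sum_{\jj \sim \ii} y_\jj$, linearity of $g$ in $\y$ yields
$$\LL g(t,\y) = \sum_{\ii \in \T_n^d} f(u-t,\ii)\bigl[(\mathbf{p}^{(\ii)}\y)_{\ii} - y_{\ii}\bigr] = \sum_{\ii \in \T_n^d} f(u-t,\ii)\, \Delta_{\ii}(\y),$$
with $\Delta_{\ii}(\y) := (2d)^{-1}\sum_{\jj\sim\ii} y_\jj - y_{\ii}$. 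The discrete integration by parts formula \eqref{d11.4}, which is a purely algebraic identity in $\y$, then shows $\LL g(t, \y) = \sum_{\ii} (\Delta f)(u-t, \ii)\, y_{\ii}$, so $\partial_t g + \LL g = 0$. To upgrade from local to true martingale, I would invoke the standard fact that $\max_\ii Y_\ii(t)$ is non-increasing and $\min_\ii Y_\ii(t)$ is non-decreasing under the smoothing dynamics, so that $\YY(t)$ is bounded on $[0,u]$ whenever $\YY(0)$ is (with a truncation argument otherwise); combined with continuity of $f$ on the compact $[0,u]$ this yields boundedness of $M_f$ on $[0,u]$.

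For the predictable quadratic variation, I would compute the carré du champ
$\Gamma(g)(t, \y) := \LL(g^2)(t, \y) - 2 g(t, \y)\, \LL g(t, \y)$.
Direct expansion of the square $g^2(\mathbf{p}^{(\ii)}\y) - g^2(\y) = (g(\mathbf{p}^{(\ii)}\y) - g(\y))^2 + 2 g(\y)(g(\mathbf{p}^{(\ii)}\y) - g(\y))$ in the pure-jump generator \eqref{d19.7} gives $\Gamma(g)(t,\y) = \sum_{\ii} (g(t, \mathbf{p}^{(\ii)}\y) - g(t, \y))^2$; since only the $\ii$-th coordinate is altered by $\mathbf{p}^{(\ii)}$, each summand equals $f^2(u-t, \ii)\, \Delta_{\ii}^2(\y)$, so
$$\Gamma(g)(t,\y) = \sum_{\ii \in \T_n^d} f^2(u-t, \ii)\, \Delta_{\ii}^2(\y).$$
Applying Dynkin's formula to $g^2$ (using $\partial_s g = -\LL g$ to cancel a $-2g\LL g$ term) then shows that $M_f(t)^2 - \int_0^t \Gamma(g)(s, \YY(s))\,ds$ is a local martingale; equivalently, via $d(M_f^2) = 2 M_{f-}\,dM_f + d[M_f]$, the predictable compensator of $[M_f]$ is precisely this integral, which is the claimed formula for $\langle M_f \rangle$.

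The main conceptual content sits in the drift cancellation $\partial_t g + \LL g = 0$, which encodes the duality between the heat semigroup and the smoothing dynamics through discrete integration by parts; once this is in hand, the quadratic variation formula follows from the pure-jump structure of $\LL$ via a routine carré du champ computation.
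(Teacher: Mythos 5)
Your proof is correct, and both your computations (the drift cancellation via discrete integration by parts \eqref{d11.4} and the carr\'e du champ $\Gamma(g)(t,\y)=\sum_{\ii}f^2(u-t,\ii)\Delta_{\ii}^2(\y)$) check out. But you take a genuinely different route from the paper. The paper works pathwise: it writes the smoothing dynamics as integrals against independent Poisson processes $N_\ii$, applies the deterministic integration by parts formula to $M_f(t)=\sum_\ii f(u-t,\ii)Y_\ii(t)$, and uses the heat equation plus \eqref{d11.4} to collapse everything into a single stochastic integral $\int_0^t \sum_\ii f(u-s,\ii)\Delta_\ii(s-)\,\widehat N_\ii(ds)$ against the compensated processes $\widehat N_\ii$; the martingale property and the predictable quadratic variation then come for free from the stochastic-integral representation (via \cite[Thm.\ I.4.40(d)]{JSh}). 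You instead take the generator-theoretic route: Dynkin's formula for the space--time process, the identity $\partial_t g+\LL g=0$, and the carr\'e du champ for the pure-jump generator \eqref{d19.7}, followed by a localization/boundedness step to upgrade from local to true martingale. The pathwise approach is slightly more self-contained and directly yields an explicit stochastic-integral representation of $M_f$ (which makes the bracket formula immediate and avoids the need to discuss localization separately); your generator approach makes the heat-equation/duality structure ($\partial_t g+\LL g=0$) more transparent and is perhaps more portable to settings where an explicit Poisson-driven representation is unavailable. Both hinge on exactly the same algebraic identity \eqref{d11.4}, so the conceptual core is shared.
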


\begin{proof}
Consider independent Poisson processes $\{N_{\ii}(\,\cdot\,)\}_{\ii \in \T_n^d}$ on $\mathbb{R}_+$ with unit intensity. The evolution equations for the smoothing process can be expressed in terms of these processes as
$$
Y_{\ii}(t) = \int_0^t\Delta_{\ii}(s-)N_{\ii}(ds), \quad \ii \in \T_n^d.
$$
Let
$\wh{N}_{\ii}$ denote the compensated Poisson process $N_{\ii}$.
By the pathwise integration by parts formula, we obtain for any $t \in [0,u)$,
\begin{align*}
M_f(t) &= -\int_0^t\sum_{\ii \in \T_n^d}\partial_sf(u-s, \ii)Y_{\ii}(s)ds + \int_0^t\sum_{\ii \in \T_n^d}f(u-s, \ii)dY_{\ii}(s)\\
&= -\int_0^t\sum_{\ii \in \T_n^d}\partial_sf(u-s, \ii)Y_{\ii}(s)ds + \int_0^t\sum_{\ii \in \T_n^d}f(u-s, \ii)\Delta_{\ii}(s-)N_{\ii}(ds)\\
&= -\int_0^t\sum_{\ii \in \T_n^d}\partial_sf(u-s, \ii)Y_{\ii}(s)ds + \int_0^t\sum_{\ii \in \T_n^d}f(u-s, \ii)\Delta_{\ii}(s)ds\\
&\qquad\qquad + \int_0^t\sum_{\ii \in \T_n^d}f(u-s, \ii)\Delta_{\ii}(s-)\wh{N}_{\ii}(ds)\\
&= \int_0^t\sum_{\ii \in \T_n^d}\left(-\partial_sf(u-s, \ii) + \Delta f(u-s, \ii)\right)Y_{\ii}(s)ds \\
&\qquad \qquad+ \int_0^t\sum_{\ii \in \T_n^d}f(u-s, \ii)\Delta_{\ii}(s-)\wh{N}_{\ii}(ds)\\
&= \int_0^t\sum_{\ii \in \T_n^d}f(u-s, \ii)\Delta_{\ii}(s-)\wh{N}_{\ii}(ds),
\end{align*}
where the fourth equality follows 
from \eqref{d11.4} 
and the last equality follows as $f$ is a solution to the heat equation. This proves that $\{M_f(t) : 0 \le t \le u\}$ is a martingale. The formula for the predictable quadratic variation follows from the above representation and \cite[Thm. I.4.40(d), p. 48]{JSh}.
\end{proof}

For functions $f,g : \mathbb{R}_+ \rightarrow \mathbb{R}$, we denote their convolution by $f\star g$. The $k$-fold convolution of $f$ with itself is denoted by $f^{\star k}$. Further, for $\ii, \jj \in \T_n^d$, $\ii + \jj$ denotes addition modulo $n$.

Recall that $p(t,\ii)$ denotes the heat kernel of the continuous time simple random walk on $\T_n^d$ starting from $\zero$. Note that $\Delta p$ is the solution to the heat equation with initial condition $\Delta p(0,\ii) = \frac{1}{2d}\sum_{\jj \sim \ii}\delta_{\zero}(\jj) - \delta_{\zero}(\ii), \ii \in \T_n^d$. Define $G : \mathbb{R}_+ \rightarrow \mathbb{R}$ by
\begin{equation}\label{l2grad}
G(t) = \sum_{\ii \in \T_n^d} (\Delta p(t,\ii))^2, \quad t \ge 0.
\end{equation}

\begin{lemma}\label{mom2delta}
 For any $u \ge 0$,
$$
\mathbb{E}\sum_{\ii \in \T_n^d}\Delta_{\ii}^2(u) = \sum_{k=1}^{\infty}G^{\star k}(u).
$$
\end{lemma}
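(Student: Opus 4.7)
The plan is to derive a renewal equation for $H(u) := \mathbb{E}\sum_{\ii \in \T_n^d}\Delta_{\ii}^2(u)$ by applying Lemma \ref{heatmart} to a carefully chosen test function, and then solve the equation by iteration to recognise the convolution series.

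For each fixed $\jj \in \T_n^d$, define $f_\jj(s,\ii) := (\Delta p)(s, \jj - \ii)$. Since $p(s,\jj - \ii)$ solves the heat equation in $(s,\ii)$ and the discrete Laplacian commutes with $\partial_s - \Delta$, the function $f_\jj$ is itself a solution of $\partial_s f_\jj = \Delta f_\jj$. Applying Lemma \ref{heatmart} with $u$ fixed, the process
\begin{equation*}
M^{(\jj)}(t) := \sum_{\ii \in \T_n^d} (\Delta p)(u-t, \jj - \ii)\, Y_{\ii}(t), \qquad 0 \le t \le u,
\end{equation*}
is a martingale with predictable quadratic variation $\int_0^t \sum_{\ii}(\Delta p)^2(u-s,\jj-\ii)\,\Delta_{\ii}^2(s)\,ds$. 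Using the initial condition $Y_{\ii}(0) = \delta_{\zero}(\ii)$, one has $M^{(\jj)}(0) = (\Delta p)(u,\jj)$, while the identity $(\Delta p)(0,\jj - \ii) = \tfrac{1}{2d}\mathbf{1}_{\ii \sim \jj} - \delta_{\jj}(\ii)$ yields $M^{(\jj)}(u) = \Delta_{\jj}(u)$.

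Combining the martingale identity $\mathbb{E}[(M^{(\jj)}(u)-M^{(\jj)}(0))^2] = \mathbb{E}\langle M^{(\jj)}\rangle(u)$ with these boundary values,
\begin{equation*}
\mathbb{E}\Delta_{\jj}^2(u) = (\Delta p)^2(u,\jj) + \int_0^u \sum_{\ii}(\Delta p)^2(u-s,\jj - \ii)\,\mathbb{E}\Delta_{\ii}^2(s)\,ds.
\end{equation*}
Summing over $\jj$ and using translation invariance on the torus, $\sum_{\jj}(\Delta p)^2(u-s,\jj - \ii) = G(u-s)$ independently of $\ii$, so
\begin{equation*}
H(u) = G(u) + \int_0^u G(u-s)\, H(s)\,ds = G(u) + (G\star H)(u).
\end{equation*}
Iterating this identity $N$ times gives $H = \sum_{k=1}^{N} G^{\star k} + G^{\star N}\star H$, and it remains to show the remainder vanishes as $N\to\infty$, after which the claimed identity follows.

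The main obstacle is the convergence of the remainder term. Since $p(t,\ii)\to n^{-d}$ exponentially fast on the finite torus (with rate governed by $\sg_d$), $\Delta p(t,\ii)$ decays exponentially in $t$, so $\|G\|_{L^1([0,\infty))}$ is finite; combined with the obvious boundedness of $H$ on compact time intervals (the total mass is conserved so $|\Delta_{\ii}(t)|\le 1$, giving $H(t)\le n^d$), a standard bound such as $\|G^{\star N}\star H\|_\infty \le n^d \|G\|_{L^1}^N / N!$-type estimate (or a direct Gronwall argument) forces $G^{\star N}\star H \to 0$ uniformly, yielding the absolutely convergent series representation $H(u) = \sum_{k=1}^{\infty} G^{\star k}(u)$.
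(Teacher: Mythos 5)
Your argument is correct and mirrors the paper's proof: the same martingale construction via Lemma \ref{heatmart} with the translated test function $\Delta p(\cdot,\jj-\ii)$, the same identification of the boundary values $M^{(\jj)}(0)=\Delta p(u,\jj)$ and $M^{(\jj)}(u)=\Delta_\jj(u)$, the same renewal equation $H=G+G\star H$, and the same convolution-series conclusion; the only difference is that the paper cites Feller's uniqueness theorem for the renewal equation, while you give the underlying Picard iteration directly, which is equivalent. Two small points to tidy: the smoothing process does \emph{not} conserve total mass (the paper stresses this), so that is not the right reason for $|\Delta_\ii(t)|\le 1$ --- the correct justification is that $\max_\ii Y_\ii$ is non-increasing and $\min_\ii Y_\ii$ is non-decreasing under the dynamics, so starting from $\delta_\zero$ all $Y_\ii(t)$ remain in $[0,1]$; and the tail estimate $n^d\|G\|_{L^1}^N/N!$ mixes two bounds, whereas either $\|G^{\star N}\star H\|_{L^\infty}\le \|H\|_\infty\|G\|_{L^1}^N$ (using $\|G\|_1=1/2<1$ from Lemma \ref{d22.2}) or the finite-interval estimate $\|G^{\star N}\star H\|_{L^\infty([0,u])}\le\|H\|_\infty\|G\|_\infty^N u^N/N!$ would cleanly kill the remainder.
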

\begin{proof}
Fix $u \ge 0$. For any $\ii \in \T_n^d$, using Lemma \ref{heatmart} with $f_{\ii}(t,\jj) = \Delta p(t, \ii + \jj)$ (which is the solution to the heat equation with initial condition $f_{\ii}(0,\jj) = \frac{1}{2d}\sum_{\kk \sim \jj}\delta_{\ii}(\kk) - \delta_{\ii}(\jj)$, $\jj \in \T_n^d$) in place of $f$, we obtain for any $0 \le t \le u$,
\begin{equation}\label{mom21}
\mathbb{E}\left(M^2_{f_{\ii}}(t)\right) - \mathbb{E}\left(M^2_{f_{\ii}}(0)\right) = \int_0^t \sum_{\jj \in \T_n^d}(\Delta p(u-s, \ii + \jj))^2\mathbb{E}(\Delta^2_{\jj}(s))ds.
\end{equation}
Consider the smoothing process $\YY = \YY^{(\zero)}$ on $\T_n^d$.
Note that from the definition, 
\begin{align*}
M_{f_{\ii}}(u) &= \sum_{\jj \in \T_n^d}\Delta p(0, \ii + \jj)Y_{\jj}(u) = \sum_{\jj \in \T_n^d}\left(\frac{1}{2d}\sum_{\kk \sim \jj}\delta_{\ii}(\kk) - \delta_{\ii}(\jj)\right)Y_{\jj}(u)\\
&= \frac{1}{2d}\sum_{\jj \sim \ii}Y_{\jj}(u) - Y_{\ii}(u) = \Delta_{\ii}(u),
\end{align*}
and
$$
M_{f_{\ii}}(0) = \sum_{\jj \in \T_n^d}\Delta p(u, \ii + \jj)Y_{\jj}(0) = \sum_{\jj \in \T_n^d}\Delta p(u, \ii + \jj)\delta_{\zero}(\jj) = \Delta p(u, \ii).
$$
Using these observations and taking $t=u$ in \eqref{mom21},
\begin{equation}\label{mom22}
\mathbb{E}\left(\Delta^2_{\ii}(u)\right) - \left(\Delta p(u, \ii)\right)^2 = \int_0^u \sum_{\jj \in \T_n^d}(\Delta p(u-s, \ii + \jj))^2\mathbb{E}(\Delta^2_{\jj}(s))ds.
\end{equation}
Now, summing over $\ii$ in \eqref{mom22}, we obtain
\begin{align*}
\mathbb{E}&\sum_{\ii \in \T_n^d}\Delta^2_{\ii}(u) - G(u) = \sum_{\ii \in \T_n^d}\int_0^u \sum_{\jj \in \T_n^d}(\Delta p(u-s, \ii + \jj))^2\mathbb{E}(\Delta^2_{\jj}(s))ds\\
&= \int_0^u \sum_{\jj \in \T_n^d}\sum_{\ii \in \T_n^d}(\Delta p(u-s, \ii + \jj))^2\mathbb{E}(\Delta^2_{\jj}(s))ds = \int_0^u \sum_{\jj \in \T_n^d}G(u-s)\mathbb{E}(\Delta^2_{\jj}(s))ds\\
& = \int_0^u G(u-s)\mathbb{E}\left(\sum_{\jj \in \T_n^d}\Delta^2_{\jj}(s)\right)ds.
\end{align*}
Note that the above holds for every $u \ge 0$. Thus the function $H(u) = \mathbb{E}\sum_{\jj \in \T_n^d}\Delta^2_{\jj}(u)$, $u \ge 0,$ solves the renewal equation
\begin{equation*}
H(u) = G(u) + \int_0^u G(u-s)H(s)ds.
\end{equation*}
As $G(\,\cdot\,)$ is non-negative and $\sup_{t \ge 0}G(t) < \infty$, by \cite[Thm. 2]{Feller}, the above equation has a unique non-negative solution which is bounded on every finite time interval, which thus equals $H$. The function $H$ can be expressed in the series form (see \cite[(7.1)]{Feller}) as
$$
H(u) = \sum_{k=1}^{\infty}G^{\star k}(u), u \ge 0,
$$
proving the lemma.
\end{proof}

In view of the above lemma, we need to study the asymptotic properties of the function $G$. 

\begin{lemma}\label{kernel}
There exist positive constants $\beta_1,\beta_2$ and $n_0 \ge 3$ (all depending on $d$) such that for all $n \ge n_0$, $t \ge 4d$,
\begin{align}\label{d16.3}
\frac{\beta_1}{t^{\frac{d}{2} + 2} \wedge n^{d+4}} 
\exp\left(-2\sg_1 d^{-1} t\right) \le G(t) 
\le \frac{\beta_2}{t^{\frac{d}{2} + 2} \wedge n^{d+4}} 
\exp\left(-2\sg_1 d^{-1} t\right).
\end{align}
\end{lemma}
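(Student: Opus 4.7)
The plan is to expand $\Delta p(t, \cdot)$ in the orthonormal eigenbasis of $P$, reduce $G(t)$ to a pure eigenvalue sum via Parseval, and then estimate that sum by combining a Riemann-sum-to-integral comparison (for $t \le n^2$) with an isolation of the slowest modes (for $t \ge n^2$).

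First, using the spectral identities $\Delta \psi_{\x} = -\hl_{\x}^{(d)} \psi_{\x}$ and \eqref{d19.1}, I write
$$\Delta p(t,\ii) = -\frac{1}{\sqrt{n^d}} \sum_{\x \in \T_n^d} \hl_{\x}^{(d)} \, e^{-\hl_{\x}^{(d)} t}\, \psi_{\x}(\ii),$$
and the orthonormality \eqref{d19.5} gives
$$G(t) = \frac{1}{n^d} \sum_{\x \in \T_n^d,\, \x \ne \zero} \bigl(\hl_{\x}^{(d)}\bigr)^2 e^{-2 \hl_{\x}^{(d)} t},$$
where the $\x = \zero$ mode drops because $\hl_{\zero}^{(d)} = 0$. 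Since $n$ is odd, the minimum of $\hl_{\x}^{(d)}$ over $\x \ne \zero$ equals $\sg_1/d$ and is attained exactly at the $2d$ unit-vector modes $\pm \mathbf{e}_j$. Factoring out the corresponding exponential,
$$n^d e^{2\sg_1 t/d} G(t) \;=\; 2d \cdot (\sg_1/d)^2 \;+\; R(t), \qquad R(t) := \sum_{\x \ne \zero,\, \pm \mathbf{e}_j} \bigl(\hl_{\x}^{(d)}\bigr)^2 e^{-2(\hl_{\x}^{(d)} - \sg_1/d) t}.$$
The constant term is of order $n^{-4}$ by \eqref{eigbd} and accounts for the $n^{-(d+4)} e^{-2\sg_1 t/d}$ portion of the target bound; the remaining work is to show $R(t) \asymp n^d/t^{d/2+2}$ for $t \le n^2$ and $R(t) = O(n^{-4})$ for $t \ge n^2$.

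Next, identifying each coordinate of $\x$ with a representative in $\{-(n-1)/2, \dots, (n-1)/2\}$ and applying \eqref{d11.5} coordinatewise, I obtain constants $c_1, c_2 > 0$ with $c_1 |\x|^2/n^2 \le \hl_{\x}^{(d)} \le c_2 |\x|^2/n^2$. A short separate check of the finitely many near-minimum modes (those $\x$ with $|\x|$ a small constant) produces a constant $c_3 > 0$ and a set $A$ of size $O(1)$ (at most the $d = 1$ mode $|\x| = 2$ and the $d \ge 2$ modes with $|\x| = \sqrt{2}$) such that, outside $A \cup \{\zero, \pm \mathbf{e}_j\}$, one has $\hl_{\x}^{(d)} - \sg_1/d \ge c_3 |\x|^2/n^2$; the contribution of $A$ to $R(t)$ is controlled trivially. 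I then split $R(t)$ according to whether $\x$ is in $A$ or not and bound the non-$A$ part by comparing the lattice sum to the integral $\int_{\mathbb{R}^d} (|\mathbf{u}|^2/n^2)^2 e^{-2c_3 |\mathbf{u}|^2 t/n^2} d\mathbf{u}$; each unit lattice cell can be sandwiched between two integrals since the integrand is radial and monotone on matched regions, yielding two-sided Riemann bounds. A change of variables $\mathbf{u} = n \mathbf{v}/\sqrt{t}$ evaluates the integral as $C_d n^d/t^{d/2+2}$, giving $R(t) \le \beta_2' n^d/t^{d/2+2}$ for all $t \ge 4d$, and, by restricting the sum to $1 \le |\x| \le n/\sqrt{C_* t}$ (which is a non-empty lattice range as long as $t \le C_*^{-1} n^2$ with $n$ sufficiently large) and using $\hl_{\x}^{(d)} \ge c_1 |\x|^2/n^2$, an analogous lower bound $R(t) \ge \beta_1' n^d/t^{d/2+2}$.

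Finally, I combine the two regimes. For $t \le n^2$, both $2d(\sg_1/d)^2$ and $R(t)$ are dominated by $C n^d/t^{d/2+2}$ (using $n^{-4} \le C n^d/t^{d/2+2}$ on this range by \eqref{eigbd}), the lower bound from $R(t)$ alone supplies the matching lower estimate, and $e^{-2\sg_1 t/d}$ is bounded between two positive constants since $\sg_1 t/d = O(1)$; this produces \eqref{d16.3} with $t^{-d/2-2}$. For $t \ge n^2$, the exponential factors in $R(t)$ now give strict decay beyond the minimum modes so $R(t) \le C n^{-4}$, while $2d(\sg_1/d)^2 \asymp n^{-4}$ dominates from below; the overall bound becomes $G(t) \asymp n^{-(d+4)} e^{-2\sg_1 t/d}$. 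The main technical hurdle is keeping all constants in the Riemann-sum comparison independent of $n$ and ensuring that the lower-bound lattice ball captures sufficiently many points in the crossover regime $t \asymp n^2$; once the finitely many near-minimum modes are handled explicitly, the rest is a clean Gaussian-moment computation.
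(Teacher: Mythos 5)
Your proposal is correct, but it takes a genuinely different route from the paper. The paper first proves the estimate for $d=1$ (by the Riemann-sum argument for $4\le t\le\eps n^2$ and by isolating the lowest eigenvalue for $t\ge\eps n^2$), proves an auxiliary two-sided bound on $\sum_{\ii}p^{(1)}(t,\ii)^2$, and then lifts to $d\ge 2$ through the tensor structure $p(t,\x)=\prod_k p^{(1)}(t/d,x_k)$: writing $\partial_t p$ as $d^{-1}\sum_k g_k$, the upper bound follows by Cauchy--Schwarz applied to the sum over $k$, and the lower bound follows because the cross terms $\sum_{\x}g_j g_k$ are nonnegative. This reduces everything to one-dimensional sums. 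You instead work directly with the full $d$-dimensional eigenvalue expansion $G(t)=n^{-d}\sum_{\x\ne\zero}(\hl_\x^{(d)})^2 e^{-2\hl_\x^{(d)}t}$, peel off the $2d$ slowest modes (which correctly gives the $n^{-(d+4)}$ floor since $\sg_1\asymp n^{-2}$), and compare the remainder $R(t)$ to a Gaussian-moment integral. Both approaches are sound and produce the same bound. What your version saves is the detour through the auxiliary estimate on $\sum p^2$ and the Cauchy--Schwarz/positivity bookkeeping; what it costs is that the $d$-dimensional Riemann comparison deserves more care than ``unit cells are sandwiched by monotone-region integrals,'' since the radial profile $u^4 e^{-c u^2 t/n^2}$ is not monotone. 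This is easily repaired, though: the exponential factor is separable across coordinates, so one can expand $|\x|^4=\sum_{j,k}x_j^2 x_k^2$ and reduce the lattice sum to products of one-dimensional sums $\sum_{x}x^{2m}e^{-c x^2 t/n^2}$ ($m=0,1,2$), each of which is $\asymp (n^2/t)^{m+1/2}$ for $t\le n^2$; this makes the $a^{-d/2-2}$ scaling (and its $n$-independent constants) transparent. You should also be a bit more explicit that in the crossover window $\eps n^2\lesssim t\lesssim n^2$ the lower bound is supplied by the slowest-mode constant $2d(\sg_1/d)^2\asymp n^{-4}$ rather than by $R(t)$ (the lattice ball $1\le|\x|\le n/\sqrt{C_* t}$ may fail to contain any admissible $\x$ there), but you already flag this issue yourself and it does not affect the final two-sided estimate.
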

\begin{proof}
We will first study the case $d=1$. 
Recall the notation and spectral representation from Remark \ref{d19.4}.
Note that $\sg_1 = \hl_1$ so we will prove \eqref{d16.3} with $\hl_1$ in place of $\sg_1$.

It follows from \eqref{d19.5}, \eqref{d19.2}
and \eqref{l2grad} that for $t \ge 0$,
\begin{equation}\label{Gform}
G(t) = \sum_{\ii \in \T_n^1} (\Delta p(t,\ii))^2 = \sum_{\ii \in \T_n^1} (\partial_t p(t,\ii))^2 = \frac{2}{n}\sum_{j=1}^{\lfloor\frac{n}{2}\rfloor}\hl_j^2e^{-2\hl_j t}.
\end{equation}

Using \eqref{d11.5} in \eqref{Gform}, we get
\begin{align}\label{n28.1}
G(t) &\ge \frac{2C_1^2}{n}\sum_{j=1}^{\lfloor\frac{n}{2}\rfloor}\frac{j^4}{n^4}e^{-2C_2j^2 t / n^2}= 2C_1^2t^{-5/2}\frac{1}{nt^{-1/2}}\sum_{j=1}^{\lfloor\frac{n}{2}\rfloor}\frac{j^4}{(nt^{-1/2})^4}e^{-2C_2j^2  / (nt^{-1/2})^2},\\
G(t) &\le \frac{2C_2^2}{n}\sum_{j=1}^{\lfloor\frac{n}{2}\rfloor}\frac{j^4}{n^4}e^{-2C_1j^2 t / n^2} = 2C_2^2t^{-5/2}\frac{1}{nt^{-1/2}}\sum_{j=1}^{\lfloor\frac{n}{2}\rfloor}\frac{j^4}{(nt^{-1/2})^4}e^{-2C_1j^2  / (nt^{-1/2})^2}.\label{n28.2}
\end{align}
Fix some $\eps \in (0,1)$. If $4 \le t \le \eps n^2$ then $1/ (nt^{-1/2})\leq \eps^{1/2}$ and the Riemann sum approximation with ``$\Delta x= 1/ (nt^{-1/2})$''  yields for large $n$
\begin{align*}
\frac{1}{nt^{-1/2}}\sum_{j=1}^{\lfloor\frac{n}{2}\rfloor}\frac{j^4}{(nt^{-1/2})^4}e^{-2C_1j^2  / (nt^{-1/2})^2}
\approx
\int_0^{\sqrt{t}/2} x^4 e^{-2C_1 x ^2}dx,
\end{align*}
and similarly for the analogous expression in \eqref{n28.2}.
This observation and \eqref{n28.1}-\eqref{n28.2} imply that there exist  positive constants $C_3, C_4$  such that for all $n,t$ satisfying $4 \le t \le \eps n^2$,
\begin{equation}\label{Gest1}
C_3 t^{-5/2} \le G(t) \le C_4 t^{-5/2}.
\end{equation}
This and \eqref{eigbd} establish the lemma for $d=1$ when $4 \le t \le \eps n^2$.

To address the region $t \ge \eps n^2$, observe that \eqref{Gform} yields
\begin{equation}\label{Gest2}
2n^{-5}(n^2\hl_1)^2e^{-2\hl_1t} \le G(t) 
\le \frac{2\hl_1^2e^{-2\hl_1 t} + 2\hl_2^2e^{-2\hl_2 t}}{n} 
+ \frac{2e^{-2\hl_1t}}{n}\sum_{j=3}^{\lfloor\frac{n}{2}\rfloor}\hl_j^2e^{-2(\hl_j -\hl_1)t}
\end{equation}
where the sum above is taken to be zero if $n=3,4,5$. 
Since $\hl_1 \le \hl_2$ for $n \ge 3$, \eqref{d11.5}  implies that
\begin{align}\label{d16.2}
\frac{2\hl_1^2e^{-2\hl_1 t} + 2\hl_2^2e^{-2\hl_2 t}}{n}
\leq \frac{2(C_2  /n^2)^2e^{-2\hl_1 t} + 2(C_2 2^2 /n^2)^2e^{-2\hl_1 t}}{n}
=\frac{34C_2^2e^{-2\hl_1 t}}{n^5}.
\end{align}
The explicit form of $\hl_j$ and 
\eqref{d16.1} show that for all $ 3 \le j \le n/2$,
\begin{align*}
\hl_j - \hl_1 &= (1-\cos(2 \pi j/n)) - (1- \cos(2\pi/n))\\
&\geq \left(\frac 12\cdot \frac {4 \pi^2 j^2}{n^2}  
-\frac 1 {24} \cdot \frac {16 \pi^4 j^4}{n^4}\right)
-  \frac 12\cdot \frac {4 \pi^2 }{n^2}  
=  \frac {2 \pi^2 j^2}{n^2}\left(1 - \frac 1 {j^2}  
-\frac { \pi^2 j^2}{3n^2}\right)\\
& \geq  \frac {2 \pi^2 j^2}{n^2}\left(1 - \frac 1 {3^2}  
-\frac { \pi^2 (n/2)^2}{3n^2}\right)
> 0.06 \frac {2 \pi^2 j^2}{n^2} =: \delta j^2/n^2.
\end{align*}
It follows from this, \eqref{d11.5}, \eqref{Gest2} and \eqref{d16.2} that if $t \ge \eps n^2$ then
\begin{equation}\label{Gest3}
\frac{(2 C_1)^2e^{-2\hl_1t}}{n^5} \le
\frac{(2n^2\hl_1)^2e^{-2\hl_1t}}{n^5} \le G(t) \le \frac{34C_2^2e^{-2\hl_1 t}}{n^5} + \frac{2C_2^2e^{-2\hl_1t}}{n^5}\sum_{j=3}^{\infty}j^4e^{-2\delta j^2 \eps},
\end{equation}
which establishes the lemma for $d=1$ when $t \ge \eps n^2$.
Thus, \eqref{Gest1} and \eqref{Gest3} together prove the lemma for $d=1$.

Next we will show that there exist positive constants $C_5, C_6$ and $n_1 \ge 3$ such that for all $n \ge n_1$, $t \ge 4$,
\begin{equation}\label{l2fn}
\frac{C_5}{t^{1/2} \wedge n} \le \sum_{\ii \in \T_n^1} p(t,\ii)^2 \le \frac{C_6}{t^{1/2} \wedge n}.
\end{equation}
Note from \eqref{d19.2} that
\begin{align*}
\sum_{\ii \in \T_n^1} \left(p(t,\ii) - \frac{1}{n}\right)^2 
&= \frac{2}{n}\sum_{j=1}^{\lfloor\frac{n}{2}\rfloor}e^{-2\hl_j t },
\end{align*}
so \eqref{d11.5} yields
\begin{align}\label{n28.3}
\sum_{\ii \in \T_n^1} \left(p(t,\ii) - \frac{1}{n}\right)^2 &\ge \frac{2}{n}\sum_{j=1}^{\lfloor\frac{n}{2}\rfloor}e^{-2C_2j^2 t / n^2}= t^{-1/2}\frac{2}{nt^{-1/2}}\sum_{j=1}^{\lfloor\frac{n}{2}\rfloor}e^{-2C_2j^2  / (nt^{-1/2})^2},\\
\sum_{\ii \in \T_n^1} \left(p(t,\ii) - \frac{1}{n}\right)^2 &\le \frac{2}{n}\sum_{j=1}^{\lfloor\frac{n}{2}\rfloor}e^{-2C_1j^2 t / n^2}= t^{-1/2}\frac{2}{nt^{-1/2}}\sum_{j=1}^{\lfloor\frac{n}{2}\rfloor}e^{-2C_1j^2  / (nt^{-1/2})^2}.\label{heatsq}
\end{align}
Fix $\eps \in (0,1)$. If $4 \le t \le \eps n^2$ then $1/ (nt^{-1/2})\leq \eps^{1/2}$ and the Riemann sum approximation yields
\begin{align}\label{n28.4}
\frac{2}{nt^{-1/2}}\sum_{j=1}^{\lfloor\frac{n}{2}\rfloor}e^{-2C_2j^2  / (nt^{-1/2})^2}
\geq C_7
2\int_0^{t/2}  e^{-2C_2 x ^2}dx
\geq C_7
2\int_0^{2}  e^{-2C_2 x ^2}dx.
\end{align}
A similar upper bound holds for the analogous expression in \eqref{heatsq}.
The estimates \eqref{n28.3}-\eqref{n28.4} and the following formula 
\begin{equation}\label{kerbreak}
\sum_{\ii \in \T_n^1} p(t,\ii)^2 = \frac{1}{n} + \sum_{\ii \in \T_n^1} \left(p(t,\ii) - \frac{1}{n}\right)^2,
\end{equation}
imply that for some positive constants $C_8, C_9$ and for all $n,t$ satisfying $4 \le t \le \eps n^2$,
\begin{equation}\label{l2fn1}
C_8 t^{-1/2} \le \sum_{\ii \in \T_n^1} p(t,\ii)^2 \le C_9 t^{-1/2}.
\end{equation}
For $t \ge \eps n^2$, using \eqref{kerbreak} and the first inequality in \eqref{heatsq}, we obtain
\begin{equation}\label{l2fn2}
\frac{1}{n} \le \sum_{\ii \in \T_n^1} p(t,\ii)^2 \le \frac{1}{n} + \frac{2}{n}\sum_{j=1}^{\infty} e^{-2C_1\eps j^2}.
\end{equation}
The bounds \eqref{l2fn} now follow from \eqref{l2fn1} and \eqref{l2fn2}.

Consider $d \ge 2$. In the rest of the proof, we will use $p(t, \x)$ to denote the heat kernel for $\T_n^d$ and  $p^{(1)}(t,\ii)$ to denote the heat kernel for $\T_n^1$. Observe that 
$
p(t,\x) = \Pi_{k=1}^d p^{(1)}(td^{-1}, x_k)
$
for $t \ge 0$ and  $\x = (x_1,\dots, x_d) \in \T_n^d$.
Hence, we can write 
$$
\partial_tp(t,\x) = d^{-1}\sum_{k=1}^d g_k(t,\x)
$$
where 
$$
g_k(t,\x) = \partial_tp^{(1)}(td^{-1}, x_k)\prod_{j\neq k} p^{(1)}(td^{-1}, x_j).
$$
To obtain the upper bound, observe that, by the Cauchy-Schwarz inequality,
\begin{align}\label{lbd}
\sum_{\x \in \T_n^d}(\partial_tp(t,\x))^2 
&\leq
\sum_{\x \in \T_n^d}d^{-1} \sum_{k=1}^d g_k(t,\x)^2 \\
&=
d^{-1}\sum_{\x \in \T_n^d} \sum_{k=1}^d 
\left(
\left(\partial_tp^{(1)}(td^{-1}, x_k)\right)^2
\prod_{j\neq k} p^{(1)}(td^{-1}, x_j)^2
\right) \notag\\
&= \left(\sum_{\ii \in \T_n^1}(\partial_tp^{(1)}(td^{-1}, \ii))^2\right)\left(\sum_{\ii \in \T_n^1}p^{(1)}(td^{-1}, \ii)^2\right)^{d-1}.\notag
\end{align}
By \eqref{Gform} and the upper bound in \eqref{d16.3} for $d=1$,
\begin{align}\label{d16.4}
\sum_{\ii \in \T_n^1}(\partial_tp^{(1)}(td^{-1}, \ii))^2\leq
\frac{\beta_2}{(td^{-1})^{\frac{1}{2} + 2} \wedge n^{5}} 
\exp\left(-2\hl_1  td^{-1}\right).
\end{align}
Note that the first two equalities in  \eqref{Gform} hold for  $\T_n^d$ with any $d\geq 1$. Hence, it follows from \eqref{lbd}, \eqref{d16.4} and the upper bound in \eqref{l2fn}  that there exist $C_{10}>0, n_2\ge 3$ such that for all $n \ge n_2$, $t \ge 4d$,
\begin{align}\label{d1}
G(t)& =
\sum_{\x \in \T_n^d}(\partial_tp(t,\x))^2 
\leq 
\frac{\beta_2}{(td^{-1})^{\frac{1}{2} + 2} \wedge n^{5}} 
\exp\left(-2\hl_1 d^{-1} t\right)
\left(  \frac{C_6}{(td^{-1})^{1/2} \wedge n}\right)^{d-1}\\
&\le \frac{C_{10}}{t^{\frac{d}{2}+2} \wedge n^{d+4}}\exp\left(-2\hl_1 d^{-1} t\right).
\notag
\end{align}
This proves the upper bound in \eqref{d16.3} for $d\geq 2$.

To obtain the lower bound, note that
$$
(\partial_tp(t,\x))^2 = d^{-2}\sum_{k=1}^dg_k(t,\x)^2 + d^{-2}\sum_{k\neq j}g_k(t,\x)g_j(t,\x).
$$
For any $k \neq j$,
\begin{multline*}
\sum_{\x \in \T_n^d}g_k(t,\x)g_j(t,\x)\\
= \left(\sum_{\ii \in \T_n^1}\left(\partial_tp^{(1)}(td^{-1}, \ii)\right)p^{(1)}(td^{-1}, \ii)\right)^2\left(\sum_{\ii \in \T_n^1}p^{(1)}(td^{-1}, \ii)^2\right)^{d-2} \ge 0,
\end{multline*}
so
\begin{align}\label{d17.1}
(\partial_tp(t,\x))^2 \geq d^{-2}\sum_{k=1}^dg_k(t,\x)^2 .
\end{align}

By \eqref{Gform} and the lower bound in \eqref{d16.3} for $d=1$,
\begin{align}\label{d17.2}
\sum_{\ii \in \T_n^1}(\partial_tp^{(1)}(td^{-1}, \ii))^2\geq
\frac{\beta_1}{(td^{-1})^{\frac{1}{2} + 2} \wedge n^{5}} 
\exp\left(-2\hl_1  td^{-1}\right).
\end{align}
It follows from the last two equalities of \eqref{lbd}, along with \eqref{d17.1}, \eqref{d17.2} and the lower bound in \eqref{l2fn}  that there exist $C_{11}>0, n_3\ge 3$ such that for all $n \ge n_3$, $t \ge 4d$,
\begin{align}\label{d1}
G(t)& 
\geq \sum_{\x \in \T_n^d} d^{-2}\sum_{k=1}^dg_k(t,\x)^2\\
&\geq 
\frac 1d\cdot
\frac{\beta_1}{(td^{-1})^{\frac{1}{2} + 2} \wedge n^{5}} 
\exp\left(-2\hl_1 d^{-1} t\right)
\left(  \frac{C_5}{(td^{-1})^{1/2} \wedge n}\right)^{d-1}\nonumber\\
&\geq \frac{C_{11}}{t^{\frac{d}{2}+2} \wedge n^{d+4}}\exp\left(-2\hl_1 d^{-1} t\right).
\notag
\end{align}
This proves the lower bound in \eqref{d16.3} for $d\geq 2$.
\end{proof}

\begin{lemma}\label{d22.2}
$\int_0^{\infty}G(t)dt = 1/2$.
\end{lemma}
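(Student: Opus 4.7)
The plan is to reduce the computation to a spectral sum on $\T_n^d$. By the heat equation for the continuous-time simple random walk, $\Delta p(t,\ii) = \partial_t p(t,\ii)$, so $G(t) = \sum_{\ii\in\T_n^d}(\partial_t p(t,\ii))^2$. The spectral representation \eqref{d19.1} gives
\begin{equation*}
\partial_t p(t,\ii) = -\frac{1}{\sqrt{n^d}}\sum_{\x\in\T_n^d}\hl_{\x}^{(d)}e^{-\hl_{\x}^{(d)}t}\psi_{\x}(\ii).
\end{equation*}
Squaring, summing over $\ii$, and invoking the orthonormality relations \eqref{d19.5}, all cross terms vanish and we obtain
\begin{equation*}
G(t) = \frac{1}{n^d}\sum_{\x\in\T_n^d}\left(\hl_{\x}^{(d)}\right)^2 e^{-2\hl_{\x}^{(d)}t}.
\end{equation*}

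Next I would integrate term by term, which is justified by Tonelli's theorem since every summand is non-negative. The $\x = \zero$ term vanishes identically (because $\hl_{\zero}^{(d)} = 0$), and for $\x\ne\zero$ we have $\int_0^\infty (\hl_{\x}^{(d)})^2 e^{-2\hl_{\x}^{(d)}t}\,dt = \hl_{\x}^{(d)}/2$. Hence
\begin{equation*}
\int_0^{\infty}G(t)\,dt = \frac{1}{2n^d}\sum_{\x\in\T_n^d}\hl_{\x}^{(d)}.
\end{equation*}

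The final step is to compute $\sum_{\x}\hl_{\x}^{(d)}$ using the definition $\hl_{\x}^{(d)} = d^{-1}\sum_{j=1}^d (1 - \cos(2\pi x_j/n))$ from Remark \ref{d19.4}. Summing over $\x=(x_1,\dots,x_d) \in \T_n^d$, each coordinate sum contributes independently, so
\begin{equation*}
\sum_{\x\in\T_n^d}\hl_{\x}^{(d)} = d^{-1}\sum_{j=1}^d n^{d-1}\sum_{x=0}^{n-1}\left(1-\cos(2\pi x/n)\right) = n^{d-1}\cdot n = n^d,
\end{equation*}
where I used $\sum_{x=0}^{n-1}\cos(2\pi x/n) = 0$ (real part of the geometric sum of $n$-th roots of unity). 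Substituting back gives $\int_0^\infty G(t)\,dt = 1/2$, as claimed. There is no real obstacle here; the only thing to be careful about is handling the zero eigenvalue separately and justifying the interchange of sum and integral, both of which are routine.
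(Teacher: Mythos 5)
Your proof is correct and follows essentially the same route as the paper: express $G$ via the spectral representation and orthonormality, integrate termwise, and reduce to the sum $\sum_{\x}\hl_{\x}^{(d)} = n^d$. The only cosmetic difference is in the last step, where you use $\sum_{x=0}^{n-1}\cos(2\pi x/n)=0$ directly (a slightly cleaner route), while the paper first rewrites $1-\cos\theta = 2\sin^2(\theta/2)$ and invokes the identity $\sum_{k=1}^{n-1}\sin^2(\pi k/n)=n/2$.
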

\begin{proof}
Recall that we write $\x = (x_1, \dots, x_n)$ for $\x \in \T_n^d$.
By Remark \ref{d19.4}, especially \eqref{d19.5} and \eqref{d19.1},
\begin{align*}
\int_0^{\infty}G(t)dt 
&= \int_0^{\infty}\sum_{\ii \in \T_n^d}(\partial_tp(t,\ii))^2 dt 
= \int_0^{\infty}\frac{1}{n^d}\sum_{\x \in \T_n^d}\left(\hl_{\x}^{(d)}\right)^2e^{-2\hl_{\x}^{(d)}t}dt\\
&= \frac{1}{2n^d}\sum_{\x \in \T_n^d}\hl_{\x}^{(d)}
= \frac{1}{2dn^d}\sum_{\x \in \T_n^d}\sum_{j=1}^d(1-\cos(2\pi x_j/n))\\
&= \frac{1}{2dn^d}dn^{d-1}\sum_{k=0}^{n-1}(1-\cos(2\pi k/n))
=\frac{1}{n}\sum_{k=1}^{n-1}\sin^2(\pi k/n) = \frac{1}{2},
\end{align*}
where the last equality follows from the  trigonometric identity $\sum_{k=1}^{n-1}\sin^2(\pi k/n) = n/2$ (see \cite[(1.18), p. 29]{Lawler}). This completes the proof of the lemma.
\end{proof}

In the remaining part of the article $C$ will denote a generic positive constant (depending on $d$ but not $n$ or $t$) whose value can change from line to line (and even within one line). 

\begin{lemma}\label{laplem}
Let $\beta_2$ be the constant in Lemma \ref{kernel}. There exists a positive integer $n_0 \ge 3$ such that for all $n \ge n_0$,
$$
\int_0^{\infty}
\exp\left(\left(2\sg_1 d^{-1} - \frac{4\beta_2}{n^{d+4}}\right)s\right)
G(s)ds \le \frac{15}{16}.
$$
\end{lemma}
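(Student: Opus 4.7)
The plan is to split the integral into three pieces at the thresholds $T$ (a large constant depending only on $d$ and $\beta_2$) and $n^2$, then apply Lemma~\ref{kernel} on the upper two pieces and Lemma~\ref{d22.2} on the bottom piece. Writing $c = 2\sg_1 d^{-1} - 4\beta_2 n^{-(d+4)}$, the key observation is that multiplying the upper bound from Lemma~\ref{kernel} by $e^{cs}$ yields the integrable integrand $\beta_2\,(s^{d/2+2}\wedge n^{d+4})^{-1} e^{-4\beta_2 s/n^{d+4}}$, where the inconvenient factor $e^{-2\sg_1 s/d}$ has been cancelled against $e^{cs}$ at the cost of leaving only the very small decay rate $4\beta_2/n^{d+4}$.

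First, on $[0,T]$ I would use the trivial pointwise bound $e^{cs} \le e^{cT}$ together with Lemma~\ref{d22.2} to obtain
\[
\int_0^T e^{cs} G(s)\,ds \;\le\; \tfrac{1}{2}\,e^{cT}.
\]
Since $\sg_1 = 1-\cos(2\pi/n) = O(n^{-2}) \to 0$, we have $cT \to 0$ as $n \to \infty$, so for $n$ large enough this piece is bounded by $9/16$. Second, for $s\in[T,n^2]$ with $T\ge 4d$, Lemma~\ref{kernel} gives $s^{d/2+2}\wedge n^{d+4} = s^{d/2+2}$, so
\[
e^{cs}G(s) \;\le\; \frac{\beta_2}{s^{d/2+2}} e^{-4\beta_2 s/n^{d+4}} \;\le\; \frac{\beta_2}{s^{d/2+2}},
\]
and integrating from $T$ to $\infty$ gives $\beta_2 T^{-(d/2+1)}/(d/2+1)$. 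Choosing $T$ large enough (depending only on $\beta_2$ and $d$) makes this at most $1/16$. Third, for $s\ge n^2$ Lemma~\ref{kernel} gives $e^{cs}G(s) \le \beta_2 n^{-(d+4)} e^{-4\beta_2 s/n^{d+4}}$, which integrates explicitly to $(1/4)\,e^{-4\beta_2 n^{-(d+2)}} < 1/4$.

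Adding the three pieces gives a bound of $9/16 + 1/16 + 1/4 = 7/8 < 15/16$, as required. The only care needed is to order the choices correctly: first fix $T \ge 4d$ so that $\beta_2 T^{-(d/2+1)}/(d/2+1) \le 1/16$, and then choose $n_0$ so large that $n_0^2 \ge T$, that $n_0$ exceeds the threshold from Lemma~\ref{kernel}, and that $cT \le \log(9/8)$; the last condition is automatic for large $n$ since $c = O(n^{-2})$. There is no serious obstacle here---the proof is essentially a bookkeeping exercise using Lemmas~\ref{kernel} and \ref{d22.2}---but the conceptual point is that the $4\beta_2/n^{d+4}$ correction in the exponent is exactly large enough to tame the $s\ge n^2$ tail through the cancellation $e^{cs}e^{-2\sg_1 s/d} = e^{-4\beta_2 s/n^{d+4}}$, while the mid-range can be handled by the polynomial decay alone.
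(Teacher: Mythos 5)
Your proof is correct and follows essentially the same strategy as the paper: split the integral into three ranges, control the low range using $\int_0^\infty G = 1/2$ (Lemma~\ref{d22.2}) and the trivial bound on the exponential, and control the middle and high ranges using the upper bound from Lemma~\ref{kernel}, with the crucial cancellation $e^{cs}\cdot e^{-2\sg_1 s/d}=e^{-4\beta_2 s/n^{d+4}}$ taming the $s\ge n^2$ tail. The only cosmetic difference is that you split at a fixed $T$ (chosen large in terms of $\beta_2,d$) where the paper splits at $\eps n^2$ (with $\eps$ fixed small); both choices make the middle piece small, yours via $T^{-(d/2+1)}$ and the paper's via $(\eps n^2)^{-(d/2+1)}$.
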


\begin{proof}
For any $\eps\in(0,1)$, we can write
\begin{align}\label{lapt1}
\int_0^{\infty}&
\exp\left(\left(2\sg_1 d^{-1} - \frac{4\beta_2}{n^{d+4}}\right)s\right)
G(s)ds \\
&= \left(\int_0^{\eps n^2}
 + \int_{\eps n^2}^{n^2}+ \int_{n^2}^{\infty}\right)
\exp\left(\left(2\sg_1 d^{-1} - \frac{4\beta_2}{n^{d+4}}\right)s\right)
G(s)ds.\notag
\end{align}

By \eqref{eigbd}, we can choose and fix $\eps>0$ sufficiently small such that for all $n \ge 3$,
$$
\exp\left(\left(2\sg_1 d^{-1} - \frac{4\beta_2}{n^{d+4}}\right)\eps n^2\right)
 \le 9/8.
$$
With this choice of $\eps$, using Lemma \ref{d22.2}, we obtain
\begin{equation}\label{lapt2}
\int_0^{\eps n^2}
\exp\left(\left(2\sg_1 d^{-1} - \frac{4\beta_2}{n^{d+4}}\right)s\right)
G(s)ds \le \frac{9}{8}\int_0^{\infty}G(s)ds = \frac{9}{16}.
\end{equation}
Moreover, using the upper bound in \eqref{d16.3},
\begin{align*}
\int_{\eps n^2}^{n^2}
&\exp\left(\left(2\sg_1 d^{-1} - \frac{4\beta_2}{n^{d+4}}\right)s\right)
G(s)ds \\
&\le \int_{\eps n^2}^{n^2} 
\exp\left(\left(2\sg_1 d^{-1} - \frac{4\beta_2}{n^{d+4}}\right)s\right)
\frac{\beta_2}{s^{\frac{d}{2} + 2} \wedge n^{d+4}}
\exp\left(-2\sg_1 d^{-1} s\right)ds\\
& \le \int_{\eps n^2}^{n^2} \frac{\beta_2}{s^{\frac{d}{2} + 2} \wedge n^{d+4}} ds
= \int_{\eps n^2}^{n^2} \frac{\beta_2}{s^{\frac{d}{2} + 2}}ds \le \frac{C}{(\eps n^2)^{\frac{d}{2} + 1}}.
\end{align*}
Choosing $n_0 \ge 3$ large enough, for any $n \ge n_0$,
\begin{equation}\label{lapt3}
\int_{\eps n^2}^{n^2}
\exp\left(\left(2\sg_1 d^{-1} - \frac{4\beta_2}{n^{d+4}}\right)s\right)
G(s)ds \le \frac{1}{8}.
\end{equation}
Again using the upper bound in \eqref{d16.3},
\begin{align}\label{lapt4}
\int_{n^2}^{\infty}
&\exp\left(\left(2\sg_1 d^{-1} - \frac{4\beta_2}{n^{d+4}}\right)s\right)
G(s)ds \\
&\le \int_{n^2}^{\infty}
\exp\left(\left(2\sg_1 d^{-1} - \frac{4\beta_2}{n^{d+4}}\right)s\right)
\frac{\beta_2}{s^{\frac{d}{2} + 2} \wedge n^{d+4}}
\exp\left(-2\sg_1 d^{-1} s\right)ds
\notag\\
&\le \int_{n^2}^{\infty}\frac{\beta_2}{n^{d+4}}
\exp\left(-\frac{4\beta_2}{n^{d+4}} s\right)ds = \frac{1}{4}
\exp\left(-\frac{4\beta_2 n^2}{n^{d+4}}\right) \le \frac{1}{4}.\notag
\end{align}
Using the bounds \eqref{lapt2}, \eqref{lapt3} and \eqref{lapt4} in \eqref{lapt1}, we obtain $n_0 \ge 3$ such that for all $n \ge n_0$,
\begin{equation}\label{lapt}
\int_0^{\infty}
\exp\left(\left(2\sg_1 d^{-1} - \frac{4\beta_2}{n^{d+4}}\right)s\right)
G(s)ds \le \frac{9}{16} + \frac{1}{8} + \frac{1}{4} = \frac{15}{16},
\end{equation}
proving the lemma.
\end{proof}

\begin{lemma}\label{enderlem}
Let $\beta_2$ be the constant in Lemma \ref{kernel}. There exist positive constants $\theta_1, \theta_2$ and $n_0 \ge 3$ (all depending on $d$) such that for all $n \ge n_0$, $t \ge 4d$,
\begin{align}\label{enderbd}
\frac{\theta_1}{t^{\frac{d}{2} + 2} \wedge n^{d+4}}\exp\left(-2\sg_1 d^{-1} t\right) & \le \mathbb{E}\sum_{\ii \in \T_n^d}\Delta_{\ii}^2(t)\\
& \le \frac{\theta_2}{t^{\frac{d}{2} + 2} \wedge n^{d+4}}\exp\left(-\left(2\sg_1 d^{-1} - \frac{4\beta_2}{n^{d+4}}\right) t\right).\notag
\end{align}
\end{lemma}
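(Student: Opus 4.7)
The lower bound is straightforward: Lemma \ref{mom2delta} combined with the lower bound of Lemma \ref{kernel} yields, keeping only the $k=1$ term in the Neumann series,
\begin{align*}
\mathbb{E}\sum_{\ii \in \T_n^d}\Delta_{\ii}^2(t) = \sum_{k\ge 1}G^{\star k}(t) \ge G(t) \ge \frac{\beta_1}{t^{d/2+2}\wedge n^{d+4}}\exp(-2\sg_1 d^{-1}t),
\end{align*}
so one may take $\theta_1 = \beta_1$.

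For the upper bound, my plan is to exploit Lemma \ref{laplem}. Write $\lambda_0 := 2\sg_1 d^{-1} - 4\beta_2/n^{d+4}$ and $\tilde G(t) := e^{\lambda_0 t}G(t)$. Since exponentials factor through convolution, $\tilde H(t) := e^{\lambda_0 t}\sum_{k\ge 1} G^{\star k}(t) = \sum_{k\ge 1}\tilde G^{\star k}(t)$, and setting $\phi(t) := (t^{d/2+2}\wedge n^{d+4})^{-1}$, the target bound in \eqref{enderbd} reduces to $\tilde H(t) \le \theta_2\, \phi(t)$ for $t\ge 4d$. Two inputs are in hand: Lemma \ref{kernel} gives $\tilde G(t) \le \beta_2\phi(t)$ for $t\ge 4d$, and Lemma \ref{laplem} combined with the identity $\int_0^\infty\tilde G^{\star k} = (\int_0^\infty\tilde G)^k$ gives the $L^1$ bound $\int_0^\infty \tilde H \le \sum_{k\ge 1}(15/16)^k = 15$.

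To upgrade the $L^1$ estimate to pointwise I would use the renewal identity $\tilde H = \tilde G + \tilde G\star \tilde H$, split the convolution integral at $s = (1-\eps)t$ for a small $\eps > 0$, and apply the elementary scaling property $\phi(\tau t) \le \tau^{-d/2-2}\phi(t)$ for $\tau \in (0,1]$ (verified case by case from the three regimes of $\phi$). The first piece satisfies
\begin{align*}
\int_0^{(1-\eps)t}\tilde G(t-s)\tilde H(s)\,ds \le \beta_2\phi(\eps t)\int_0^\infty\tilde H \le 15\beta_2\eps^{-d/2-2}\phi(t),
\end{align*}
while for the second piece, with $L(T) := \sup_{s\ge T}\tilde H(s)/\phi(s)$,
\begin{align*}
\int_{(1-\eps)t}^t \tilde G(t-s)\tilde H(s)\,ds \le L((1-\eps)t)\phi((1-\eps)t)\int_0^{\eps t}\tilde G \le q_\eps L((1-\eps)t)\phi(t),
\end{align*}
where $q_\eps := (1-\eps)^{-d/2-2}\cdot 15/16$. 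Choosing $\eps$ small enough that $q_\eps < 1$, adding $\tilde G(t) \le \beta_2\phi(t)$, and taking sup over $t \ge T$ produces the renewal inequality $L(T) \le K_\eps + q_\eps L((1-\eps)T)$ for $T \ge 4d/\eps$, with $K_\eps = \beta_2(1+15\eps^{-d/2-2})$.

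The main technical obstacle is extracting a bound $\theta_2$ independent of $n$. Iterating $m$ times gives $L(T) \le K_\eps/(1-q_\eps) + q_\eps^m L((1-\eps)^m T)$. A uniform a priori estimate $\sup_t \tilde H(t) \le 16\|\tilde G\|_\infty$ (from Young's inequality applied to $\|\tilde G^{\star k}\|_\infty \le \|\tilde G\|_\infty (15/16)^{k-1}$) translates, because $\phi$ plateaus at $n^{-d-4}$, into $L(T) \le Cn^{d+4}$ with $C$ uniform in $n$. Taking $m\sim (d+4)\log n/\log(1/q_\eps)$ kills the residual $q_\eps^m\cdot Cn^{d+4}$, provided the iteration range $(1-\eps)^j T \ge 4d/\eps$ for $j < m$ is respected, which forces $T \ge n^{C'}$ for some $C' > 0$. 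This establishes $L(T) \le K_\eps/(1-q_\eps)$ for all sufficiently large $T$ (the late regime $t \ge n^{C'}$); the early regime $4d \le t \le n^{C'}$ is then handled by one more application of the renewal equation, using the just-proved bound for the tail $s\ge n^{C'}$ and the $L^1$-bound for the bulk, the key point being that here $\phi(t)=t^{-d/2-2}$ and the estimates are essentially $n$-free. Setting $\theta_2$ to a suitable constant multiple of $K_\eps/(1-q_\eps)$ finishes the proof.
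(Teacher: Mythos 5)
Your lower bound and the overall setup for the upper bound are correct: passing to $\tilde G(t)=e^{\lambda_0 t}G(t)$, using the pointwise bound $\tilde G\le\beta_2\phi$ for $t\ge 4d$ (Lemma \ref{kernel}), the $L^1$ bound $\int_0^\infty\tilde G\le 15/16$ (Lemma \ref{laplem}), and the doubling property $\phi(\tau t)\le\tau^{-d/2-2}\phi(t)$ is exactly the right toolbox. The split of the convolution at $(1-\eps)t$ and the resulting inequality $L(T)\le K_\eps+q_\eps L((1-\eps)T)$ for $T\ge 4d/\eps$ are correct.

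The gap is in closing the iteration. Because $L(T)=\sup_{s\ge T}\tilde H(s)/\phi(s)$ is defined as a \emph{backward} sup, the recursion pushes toward small $T$, and the only available base case is the a priori estimate $L\le Cn^{d+4}$, which is off by the full plateau factor. As you note, killing the residual $q_\eps^m\,Cn^{d+4}$ forces $T\gtrsim n^{C'}$. But your proposal for the early regime $4d\le t\lesssim n^{C'}$ does not work as stated: for such $t$, the renewal integral $\int_0^t\tilde G(t-s)\tilde H(s)\,ds$ ranges over $s\le t\lesssim n^{C'}$, so there is no ``tail $s\ge n^{C'}$'' to which the just-proved late-regime bound applies. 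Bounding the near-diagonal piece by the a priori $\sup\tilde H\le C$ and the $L^1$ bound yields only $\ell(t)\le K_\eps + C t^{d/2+2}$, which is useless near $t\sim n^2$, and the backward recursion cannot reach $T_0\approx 4d/\eps$ with a residual smaller than $O(n^{d+4})$ when $T$ itself is early. So as written the argument does not produce a $\theta_2$ independent of $n$.

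The defect is repairable within your framework if you flip the direction of the sup: set $M(t):=\sup_{4d\le s\le t}\tilde H(s)/\phi(s)$. The same split then gives, for $t\ge 4d/\eps$, $\ell(t)\le K_\eps+q_\eps\sup_{s\in[(1-\eps)t,t]}\ell(s)\le K_\eps+q_\eps M(t)$, and since on $[4d,4d/\eps]$ one has the $n$-free bound $\ell(s)\le\sup\tilde H\cdot(4d/\eps)^{d/2+2}=:M_0$ (here the plateau of $\phi$ is irrelevant), this yields the self-bounding inequality $M(t)\le\max\bigl(M_0,\,K_\eps+q_\eps M(t)\bigr)$, and since $M(t)<\infty$ (by the a priori bound), one concludes $M(t)\le\max(M_0,\,K_\eps/(1-q_\eps))$ for all $t$. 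The key point you miss is that the ``base case'' has to sit at \emph{small} $t$, where $\phi$ is bounded below by an $n$-free constant, not at large $t$, where the a priori bound costs $n^{d+4}$.

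For comparison, the paper avoids the renewal-inequality iteration entirely: it proves by induction on $k$ a pointwise bound $\tilde G^{\star k}(t)\le 2\theta^k\ol G(t(1-\sum_{j\le k}a_j))+\theta^{k-1}\sum_{j\le k}\ol G(ta^*a_j)$ with a geometrically decaying splitting sequence $(a_m)$ satisfying $\sum a_m=1/2$ and tuned so that $\theta^{j-1}a_j^{-(d/2+2)}$ stays summable, then sums over $k$. This bounds each $k$-fold convolution directly with $n$-free constants and so sidesteps the early-versus-late-regime bookkeeping that your approach needs. Both proofs use the same three inputs, but the paper's finite induction is cleaner and more robust than the fixed-point iteration you set up.
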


\begin{proof}
By Lemma \ref{mom2delta}, for any $t \ge 0$,
$$
\mathbb{E}\sum_{\ii \in \T_n^d}\Delta_{\ii}^2(t)  = \sum_{k=1}^{\infty}G^{\star k}(t) \ge G(t).
$$
Choosing $n_0$ as in Lemma \ref{kernel}, for $n \ge n_0$ and $t \ge 4d$,
$$
\mathbb{E}\sum_{\ii \in \T_n^d}\Delta_{\ii}^2(t) \ge \frac{\beta_1}{t^{\frac{d}{2} + 2} \wedge n^{d+4}} 
\exp\left(-2\sg_1 d^{-1} t\right)
$$
proving the lower bound in \eqref{enderbd}.

Next we will prove the upper bound in \eqref{enderbd}. Define 
\begin{align*}
\wt{G}(t) =
\exp\left(\left(2\sg_1 d^{-1} - \frac{4\beta_2}{n^{d+4}}\right)t\right)
G(t), \quad t \ge 0.
\end{align*}
It follows from \eqref{n28.2} that for all $t>0$,
\begin{align*}%\label{n28.1}
G(t) &\le \frac{2C_2^2}{n}\sum_{j=1}^{\lfloor\frac{n}{2}\rfloor}\frac{j^4}{n^4}e^{-2C_1j^2 t / n^2} 
\le \frac{2C_2^2}{n}\sum_{j=1}^{\lfloor\frac{n}{2}\rfloor}\frac{j^4}{n^4}
\leq C < \infty.
\end{align*}
It follows from this and the upper bound in Lemma \ref{kernel} that there exists a positive constant $C_*$ depending only on $d$, such that
\begin{equation}\label{cbd1}
\wt{G}(t) \le \ol G(t) := \frac{C_*}{t^{\frac{d}{2} + 2} \wedge n^{d+4}}, \ t > 0.
\end{equation}
Write $\theta : = \frac{15}{16}$. By Lemma \ref{laplem},
\begin{equation}\label{cbd0}
\int_0^{\infty}\wt{G}(s)ds \le \theta.
\end{equation}
Define the sequence 
$$
a_m := \frac{(16/17)^{\frac{2(m-1)}{d+4}}}{2\sum_{j=1}^{\infty}(16/17)^{\frac{2(j-1)}{d+4}}}, \ m \ge 1.
$$
As $\sum_{j=1}^{\infty}a_j = 1/2$,
$$
a^* := \prod_{l=1}^{\infty}(1-a_l) >0.
$$
We claim the following: for all $k \ge 1$ and $t>0$,
\begin{equation}\label{convbd}
\wt{G}^{\star k}(t) \le \wh{G}_k(t) := 2\theta^k\ol G\left(t - t\sum_{j=1}^ka_j\right) + \theta^{k-1}\sum_{j=1}^k \ol G\left(ta^*a_j\right).
\end{equation}
Since $\sum_{j=1}^{\infty}a_j = 1/2$, $\wh{G}_k(\,\cdot\,)$ is well-defined and non-increasing for each $k \ge 1$.

We will prove \eqref{convbd} by induction. We now formulate the induction statement. Define $\{b_{jk}\}_{1 \le j \le k< \infty}$ by $b_{jj} := 1$ and
$$
b_{jk} := \prod_{l=j+1}^k(1-a_l), \qquad 1 \le j < k < \infty.
$$
Then we claim that for $k \ge 1$ and $t>0$,
\begin{equation}\label{ind}
\wt{G}^{\star k}(t) \le \wh{G}^*_k(t) := 2\theta^k\ol G\left(t - t\sum_{j=1}^ka_j\right) + \theta^{k-1}\sum_{j=1}^k \ol G\left(tb_{jk}a_j\right).
\end{equation}
As $b_{jk} \ge a^*$ for all $1 \le j \le k < \infty$ and $\ol G(\cdot)$ is non-increasing, establishing \eqref{ind} proves \eqref{convbd} for any $k \ge 1$.

For later reference, we note that $\wh{G}^*_k(\cdot)$ is non-increasing.

The claim \eqref{ind} for $k=1$ follows from \eqref{cbd1} and the observation that $\ol G(\cdot)$ is non-increasing. Suppose the claim \eqref{ind} is true for all $k \le N$. Then for any $t >0$,
\begin{align}\label{cbd2}
\wt{G}^{\star (N+1)}(t) & = \int_0^t \wt{G}(s)\wt{G}^{\star N}(t-s)ds\\
& = \int_0^{ta_{N+1}} \wt{G}(s)\wt{G}^{\star N}(t-s)ds + \int_{ta_{N+1}}^t \wt{G}(s)\wt{G}^{\star N}(t-s)ds.\notag
\end{align}
By the induction hypothesis, the monotonicity of $\wh{G}^*_N(\,\cdot\,)$ and \eqref{cbd0},
\begin{align}\label{cbd3}
\int_0^{ta_{N+1}} \wt{G}(s)\wt{G}^{\star N}(t-s)ds & \le \int_0^{ta_{N+1}} \wt{G}(s)\wh{G}^*_N(t-s)ds\\
& \le \wh{G}^*_N(t-ta_{N+1})\int_0^{ta_{N+1}} \wt{G}(s)ds \le \theta\wh{G}_N^*(t-ta_{N+1}). \notag
\end{align}
From \eqref{cbd1}, \eqref{cbd0}, and using the monotonicity of $\overline{G}(\,\cdot\,)$,
\begin{align}\label{cbd4}
\int_{ta_{N+1}}^t \wt{G}(s)\wt{G}^{\star N}(t-s)ds & \le \overline{G}(ta_{N+1})\int_{ta_{N+1}}^t \wt{G}^{\star N}(t-s)ds\\
& \le \overline{G}(ta_{N+1})\int_{0}^{\infty} \wt{G}^{\star N}(s)ds\notag\\
& = \overline{G}(ta_{N+1})\left(\int_{0}^{\infty} \wt{G}(s)ds\right)^N \le \theta^N \overline{G}(ta_{N+1}).\notag
\end{align}
Using \eqref{cbd3} and \eqref{cbd4} in \eqref{cbd2}, and recalling the explicit form of $\wh{G}^*_N$ given in \eqref{ind}, we obtain
\begin{align*}
\wt{G}^{\star (N+1)}&(t)  \le \theta\wh{G}^*_N(t-ta_{N+1}) + \theta^N \overline{G}(ta_{N+1})\\
& = 2\theta^{N+1}\ol G\left(\left(t - ta_{N+1}\right)\left(1 - \sum_{j=1}^Na_j\right)\right)\\
&\qquad \qquad \qquad + \theta^{N}\sum_{j=1}^N \ol G\left(t(1-a_{N+1})b_{jN}a_j\right) + \theta^N \overline{G}(ta_{N+1})\\
& = 2\theta^{N+1}\ol G\left(t - t\sum_{j=1}^{N+1}a_j + ta_{N+1}\sum_{j=1}^Na_j\right) + \theta^{N}\sum_{j=1}^{N+1} \ol G\left(tb_{j(N+1)}a_j\right)\\
& \le 2\theta^{N+1}\ol G\left(t - t\sum_{j=1}^{N+1}a_j\right) + \theta^{N}\sum_{j=1}^{N+1} \ol G\left(tb_{j(N+1)}a_j\right),
\end{align*}
where we used the observation that $b_{(N+1)(N+1)} = 1$ and $b_{j(N+1)} = (1-a_{N+1})b_{jN}$ for $j \le N$ to obtain the second equality, and the monotonicity of $\overline{G}(\,\cdot\,)$ to obtain the last inequality. This proves the claim \eqref{ind} for $k=N+1$. Thus, by induction, \eqref{ind}, and hence \eqref{convbd}, is true for all $k \ge 1$.

Now we prove the upper bound in \eqref{enderbd} using \eqref{convbd}. 
By Lemma \ref{mom2delta} and \eqref{convbd}, for any $t >0$,
\begin{align}\label{ender1}
\exp&\left(\left(2\sg_1 d^{-1} - \frac{4\beta_2}{n^{d+4}}\right)t\right) \ \mathbb{E}\sum_{\ii \in \T_n^d}\Delta_{\ii}^2(t) \\
& = \exp\left(\left(2\sg_1 d^{-1} - \frac{4\beta_2}{n^{d+4}}\right)t\right)
\sum_{k=1}^{\infty}G^{\star k}(t) 
= \sum_{k=1}^{\infty}\wt{G}^{\star k}(t) \notag \\
&\le \sum_{k=1}^{\infty}\wh{G}_k(t) 
= 2\sum_{k=1}^{\infty}\theta^k\ol G\left(t - t\sum_{j=1}^ka_j\right) + \sum_{k=1}^{\infty}\theta^{k-1}\sum_{j=1}^k \ol G\left(t a^* a_j\right) \notag\\
&\le 2\overline{G}(t/2)\sum_{k=1}^{\infty}\theta^k + \sum_{j=1}^{\infty}\ol G\left(t a^* a_j\right)\sum_{k=j}^{\infty} \theta^{k-1}\notag\\ 
&= \frac{2\theta}{1-\theta}\overline{G}(t/2) + \frac{1}{1-\theta}\sum_{j=1}^{\infty}\theta^{j-1} \ol G\left(t a^* a_j\right)\notag,
\end{align}
where we used the observations that $\overline{G}(\,\cdot\,)$ is non-increasing and $\sum_{j=1}^{\infty}a_j = 1/2$, along with an interchange of summation, to obtain the second inequality.

Recall that $\theta = 15/16$ and note that $\frac{15}{16} \cdot \frac{17}{16}<1$. Consider the case when $t \le 2n^2$ and note that $t/2 \leq n^2$ and $t a^* a_j \leq n^2$. We have, using \eqref{cbd1} in \eqref{ender1},
\begin{align}\label{ender2}
\exp&\left(\left(2\sg_1 d^{-1} - \frac{4\beta_2}{n^{d+4}}\right)t\right)  \mathbb{E}\sum_{\ii \in \T_n^d}\Delta_{\ii}^2(t) \\
& \le \frac{2C_*\theta}{(1-\theta)}\frac{1}{t^{\frac{d}{2} + 2}} + \frac{C_*}{1-\theta}\sum_{j=1}^{\infty}\theta^{j-1} \frac{1}{(ta^*a_j)^{\frac{d}{2} + 2}}\notag\\
&= \frac{2C_*\theta}{(1-\theta)}\frac{1}{t^{\frac{d}{2} + 2}}\notag\\
&\qquad + \frac{C_*}{(a^*)^{\frac{d}{2} + 2}(1-\theta)}\left(2\sum_{j=1}^{\infty}(16/17)^{\frac{2(j-1)}{d+4}}\right)^{\frac{d}{2} + 2} \frac{1}{t^{\frac{d}{2} + 2}}\sum_{j=1}^{\infty}\left(\frac{15}{16}\right)^{j-1} \left(\frac{17}{16}\right)^{j-1} \notag\\
&\le \frac{C}{t^{\frac{d}{2} + 2}}.\notag
\end{align}

For $t > 2n^2$, again using \eqref{cbd1} in \eqref{ender1},
\begin{align}\label{ender3}
\exp&\left(\left(2\sg_1 d^{-1} - \frac{4\beta_2}{n^{d+4}}\right)t\right)  \mathbb{E}\sum_{\ii \in \T_n^d}\Delta_{\ii}^2(t)\\
 & \le \frac{2C_*\theta}{(1-\theta)}\frac{1}{n^{d+4}} + \frac{C_*}{1-\theta}\sum_{j=1}^{\infty}\theta^{j-1} \frac{1}{(ta^*a_j)^{\frac{d}{2} + 2} \wedge n^{d+4}}\notag\\
& = \frac{2C_*\theta}{(1-\theta)}\frac{1}{n^{d+4}} + \frac{C_*}{1-\theta}\sum_{j: ta^*a_j > n^2}\theta^{j-1} \frac{1}{(ta^*a_j)^{\frac{d}{2} + 2} \wedge n^{d+4}}\notag\\
&\qquad + \frac{C_*}{1-\theta}\sum_{j: ta^*a_j \le n^2}\theta^{j-1} \frac{1}{(ta^*a_j)^{\frac{d}{2} + 2} \wedge n^{d+4}}\notag\\
&= \frac{2C_*\theta}{(1-\theta)}\frac{1}{n^{d+4}} + \frac{C_*}{1-\theta}\frac{1}{n^{d+4}}\sum_{j: ta^*a_j > n^2}\theta^{j-1}
+ \frac{C_*}{1-\theta}\sum_{j: ta^*a_j \le n^2}\theta^{j-1} \frac{1}{(ta^*a_j)^{\frac{d}{2} + 2}}\notag\\
&\le \frac{2C_*\theta}{(1-\theta)}\frac{1}{n^{d+4}} + \frac{C_*}{1-\theta}\frac{1}{n^{d+4}}\sum_{j=1}^{\infty}\theta^{j-1}\notag\\
&\qquad + \frac{C_*}{(a^*)^{\frac{d}{2} + 2}(1-\theta)}\left(2\sum_{j=1}^{\infty}(16/17)^{\frac{2(j-1)}{d+4}}\right)^{\frac{d}{2} + 2} \frac{1}{t^{\frac{d}{2} + 2}}\sum_{j=1}^{\infty}\left(\frac{15}{16}\right)^{j-1} \left(\frac{17}{16}\right)^{j-1}\notag\\
&\le \frac{C}{n^{d+4}} + \frac{C}{t^{\frac{d}{2} + 2}} \le \frac{C}{n^{d+4}}\notag.
\end{align}
Combining \eqref{ender2} and \eqref{ender3}, we obtain
$$
\mathbb{E}\sum_{\ii \in \T_n^d}\Delta_{\ii}^2(t) \le \frac{C}{t^{\frac{d}{2} +2} \wedge n^{d+4}}
\exp\left(-\left(2\sg_1 d^{-1} - \frac{4\beta_2}{n^{d+4}}\right)t
\right),
$$
which gives the upper bound in \eqref{enderbd}. This proves the lemma.
\end{proof}

Recall the energy functional $\Et(t)$ defined in \eqref{d19.8}, the generator $\LL$ of the smoothing process on $\T_n^d$ defined in \eqref{d19.7}, and $\Delta_\ii(t)$ defined in \eqref{d19.9}.
\begin{lemma}\label{energen}
For any $t \ge 0$, $\LL \Et(t) = -n^{-d}\sum_{\ii \in \T_n^d}\Delta_{\ii}^2(t)$, and, consequently,
$$
\mathbb{E}\Et(t) = n^{-d}\int_t^{\infty}\mathbb{E}\left(\sum_{\ii \in \T_n^d}\Delta_{\ii}^2(s)\right)ds.
$$
\end{lemma}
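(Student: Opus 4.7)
The plan is to first derive the pointwise generator identity $\LL \Et(\y) = -n^{-d}\sum_{\ii} \Delta_\ii(\y)^2$ by direct calculation, and then integrate using Dynkin's formula, passing to the limit $t \to \infty$ with the help of Theorem \ref{global}.

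First, I would fix a deterministic configuration $\y = (y_\ii)_{\ii \in \T_n^d}$ and compute the effect on $\Et$ of a single clock ring at site $\kk$. Only the ordered pairs in \eqref{d19.8} having $\kk$ as an endpoint are affected, and these split symmetrically into contributions from $(\kk,\jj)$ and $(\jj,\kk)$ for $\jj\sim\kk$. Writing $y_\kk'=y_\kk+\Delta_\kk(\y)$ and expanding,
\begin{align*}
(y_\kk'-y_\jj)^2 - (y_\kk-y_\jj)^2 = 2\Delta_\kk(\y)(y_\kk-y_\jj) + \Delta_\kk(\y)^2,
\end{align*}
and summing over neighbors $\jj$ of $\kk$, the linear terms collapse via the identity $\sum_{\jj\sim\kk}(y_\kk - y_\jj) = -2d\,\Delta_\kk(\y)$. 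This is the same bookkeeping as in the proof of Proposition \ref{d19.10}; with the ordered-pair normalisation of \eqref{d19.8}, the net jump is $\Et(\mathbf p^{(\kk)}\y) - \Et(\y) = -n^{-d}\Delta_\kk(\y)^2$. Summing over $\kk$ via \eqref{d19.7} gives the first claim of the lemma.

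Next, by Dynkin's formula,
\begin{align*}
M(t) := \Et(t) - \Et(0) - \int_0^t \LL\Et(s)\,ds
\end{align*}
is a local martingale. Proposition \ref{d19.10} guarantees $0 \le \Et(t) \le \Et(0)$ almost surely, which makes $M$ a true martingale (provided $\mathbb{E}\Et(0) < \infty$, which follows from the assumption $\mathbb{E}\Vt(0)<\infty$ needed already for Theorem \ref{global}) and legitimises Fubini. Taking expectations and substituting the generator identity yields, for any $0 \le t \le u$,
\begin{align*}
\mathbb{E}\Et(t) - \mathbb{E}\Et(u) = n^{-d}\int_t^u \mathbb{E}\sum_{\ii\in\T_n^d}\Delta_\ii^2(s)\,ds.
\end{align*}

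Finally, to pass $u\to\infty$ I would show $\mathbb{E}\Et(t) \to 0$. Using $(y_\ii - y_\jj)^2 = ((y_\ii - \bar y) - (y_\jj - \bar y))^2 \le 2(y_\ii-\bar y)^2 + 2(y_\jj - \bar y)^2$, and noting that each vertex appears $2d$ times as either coordinate of an ordered neighbour pair, one obtains the pointwise bound $\Et(t) \le 2\,\Vt(t)$. Theorem \ref{global} then gives $\mathbb{E}\Et(t) \le 2\mathbb{E}\Vt(0)\exp(-\sgt t) \to 0$, and letting $u\to\infty$ in the displayed identity produces the stated integral representation. The only real obstacle is the generator calculation, which is pure bookkeeping once one keeps track of the ordered-pair convention; the integrability and limit arguments are immediate from Proposition \ref{d19.10} and Theorem \ref{global}.
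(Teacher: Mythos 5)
Your proof is correct and follows essentially the same approach as the paper: compute the per-site jump of $\Et$ to obtain $\LL\Et(t) = -n^{-d}\sum_{\ii}\Delta_{\ii}^2(t)$, then take expectations and integrate forward in time; you additionally make explicit, via Dynkin's formula and the bound $\Et(t)\le 2\Vt(t)$ with Theorem \ref{global}, the step $\mathbb{E}\Et(t)\to 0$ that the paper leaves implicit in ``Taking expectation and integrating from $t$ to infinity.'' One small caution: you cite the bookkeeping of Proposition \ref{d19.10}, but the displayed computation there uses the prefactor $1/(4dn^d)$ and arrives at $-\Delta_{\ii}^2/(2n^d)$, apparently dropping the factor of two that arises because both ordered pairs $(\ii,\jj)$ and $(\jj,\ii)$ are affected; your own jump calculation, which keeps that factor and gets $-n^{-d}\Delta_{\kk}^2$, agrees with the generator computation in the paper's proof of this lemma and is the correct one.
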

\begin{proof}
We have
\begin{align}\label{eg1}
\LL \Et(t) &= \frac{1}{2dn^d} \sum_{\ii \in \T_n^d} \sum_{\jj :\jj \sim \ii}\left[(Y_{\ii}(t) - Y_{\jj}(t) + \Delta_{\ii}(t))^2 - (Y_{\ii}(t) - Y_{\jj}(t))^2\right]\\
 &= \frac{1}{2dn^d} \sum_{\ii \in \T_n^d} \sum_{\jj :\jj \sim \ii}\left[\Delta^2_{\ii}(t) + 2(Y_{\ii}(t) - Y_{\jj}(t))\Delta_{\ii}(t)\right]\nonumber\\
 &= \frac{1}{n^d} \sum_{\ii \in \T_n^d}\Delta^2_{\ii}(t) + \frac{1}{dn^d} \sum_{\ii \in \T_n^d} \sum_{\jj :\jj \sim \ii}(Y_{\ii}(t) - Y_{\jj}(t))\Delta_{\ii}(t)\nonumber.
\end{align}
Note that
\begin{align*}
\frac{1}{dn^d} \sum_{\ii \in \T_n^d} \sum_{\jj :\jj \sim \ii}(Y_{\ii}(t) - Y_{\jj}(t))\Delta_{\ii}(t) &= \frac{1}{dn^d} \sum_{\ii \in \T_n^d}\Delta_{\ii}(t) \sum_{\jj :\jj \sim \ii}(Y_{\ii}(t) - Y_{\jj}(t))\\
& = -\frac{2}{n^d}\sum_{\ii \in \T_n^d}\Delta^2_{\ii}(t).
\end{align*}
Using this in \eqref{eg1},
\begin{align}\label{engeneq}
\LL \Et(t) = -\frac{1}{n^d}\sum_{\ii \in \T_n^d}\Delta^2_{\ii}(t).
\end{align}
Taking expectation and integrating from $t$ to infinity on both sides, we obtain the result.
\end{proof}

We will write $\langle \,\cdot\,, \,\cdot\,\rangle$ to denote the inner product with respect to the counting measure on $\T_n^d$. 
Recall eigenvalues $\hl^{(d)}_\x$ and eigenfunctions $\psi_\x$ from Section \ref{prelim}. In the following, it will be more convenient to relabel the eigenvalues as $0=\hld_0< \hld_1 \leq \dots \leq \hld_{n^d-1}$. The eigenfunctions will be similarly labeled $\psi_j$. Write $V_j(t) = n^{-d/2} \langle \psi_j, \Y(t) \rangle$, $t \ge 0$, $j \ge 1$. Note that $\Y(t) = \sum_{j=0}^{n^d-1}n^{d/2}V_j(t)\psi_j$.

\begin{lemma}
We have
\begin{align}\label{d23.1}
\EE(t) &=  n^{-d}\langle \Y(t), (I-P)\Y(t)\rangle 
= \sum_{j=1}^{n^d-1}\hld_j V^2_j(t),\\
\label{energy2}
\LL \EE(t) &= -n^{-d}\sum_{\ii \in \T_n^d}\Delta_{\ii}^2(t) 
= -n^{-d}\langle \Y(t), (I-P)^2\Y(t)\rangle 
= -\sum_{j=1}^{n^d-1}\left(\hld_j\right)^2 V_j^2(t),\\
\label{energy3}
\Vt(t) &= \left\langle \sum_{j=1}^{n^d-1}V_j(t) \psi_j, \sum_{j=1}^{n^d-1}V_j(t) \psi_j \right\rangle = \sum_{j=1}^{n^d-1}V_j^2(t),\\
\label{vgen}
\LL \Vt(t) &= -2\EE(t) - (1-n^{-d})\LL \EE(t).
\end{align}
\end{lemma}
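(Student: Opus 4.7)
The plan is to derive all four identities from the spectral decomposition $\Y(t) = \sum_{j=0}^{n^d-1} n^{d/2}V_j(t)\psi_j$, using that on $\T^d_n$ the simple-random-walk matrix $P$ is symmetric with respect to the counting inner product $\langle\cdot,\cdot\rangle$ and satisfies $(I-P)\psi_j = \hld_j \psi_j$. The fourth identity \eqref{vgen} will follow by specializing the general generator formula \eqref{lyap5} that was already obtained in the proof of Theorem \ref{global}.

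For \eqref{d23.1}, I first expand
\[
\sum_{\ii \sim \jj}(Y_{\ii}(t)-Y_{\jj}(t))^2 = 4d\sum_{\ii}Y_{\ii}(t)^2 - 2\sum_{\ii\sim\jj}Y_{\ii}(t)Y_{\jj}(t),
\]
using that each vertex has $2d$ neighbors. Since $(P\Y(t))_{\ii} = \frac{1}{2d}\sum_{\jj\sim\ii}Y_{\jj}(t)$, the right-hand side equals $4d\langle \Y(t),(I-P)\Y(t)\rangle$, so $\EE(t) = n^{-d}\langle \Y(t),(I-P)\Y(t)\rangle$. Substituting the eigenexpansion of $\Y(t)$ and invoking the orthonormality relations \eqref{d19.5} gives the spectral form; the sum starts at $j=1$ because $\hld_0 = 0$.

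The first equality in \eqref{energy2} is Lemma \ref{energen}. For the second equality, \eqref{d19.9} says $\Delta_{\ii}(t) = -[(I-P)\Y(t)]_{\ii}$, so symmetry of $P$ yields $\sum_{\ii}\Delta_{\ii}^2(t) = \langle \Y(t),(I-P)^2\Y(t)\rangle$, and spectral expansion produces the final form. For \eqref{energy3}, note that $\psi_0 \equiv n^{-d/2}$ and that all other $\psi_j$ are orthogonal to $\psi_0$, hence $\bY(t) = n^{-d}\sum_{\ii}Y_{\ii}(t) = V_0(t)$ and $\Y(t)-\bY(t)\one = \sum_{j\geq 1}n^{d/2}V_j(t)\psi_j$. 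Combining $\Vt(t) = n^{-d}\|\Y(t)-\bY(t)\one\|^2$ with orthonormality produces both stated equalities in \eqref{energy3}.

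The fourth identity \eqref{vgen} is obtained by specializing \eqref{lyap5} to $\pi_{\ii}=n^{-d}$. Since $\sum_{\kk} q_{\ii\kk}Y_{\kk}(t) = (P\Y(t)-\Y(t))_{\ii} = \Delta_{\ii}(t)$, the second term of \eqref{lyap5} becomes $-n^{-2d}\sum_{\ii}\Delta_{\ii}^2(t)$, which by the already-proved first equality in \eqref{energy2} equals $+n^{-d}\LL\EE(t)$. For the first term, the same expansion performed in the proof of Theorem \ref{global} shows it equals $-n^{-d}\langle \Y(t),(I-P^2)\Y(t)\rangle$; factoring $I-P^2=(I-P)(I+P)$ and applying the spectral decomposition gives $-\sum_{j\geq 1}V_j^2(t)\,[2\hld_j-(\hld_j)^2]=-2\EE(t)-\LL\EE(t)$. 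Adding the two contributions yields $\LL\Vt(t) = -2\EE(t)-\LL\EE(t)+n^{-d}\LL\EE(t) = -2\EE(t)-(1-n^{-d})\LL\EE(t)$. There is no substantive obstacle; the only mild care needed is to track the $n^{-d}$ that comes from the $\pi_{\ii}^2$ in the second term of \eqref{lyap5}, which is precisely the source of the $(1-n^{-d})$ factor.
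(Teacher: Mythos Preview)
Your proof is correct and follows essentially the same route as the paper: both derive \eqref{d23.1}--\eqref{energy3} directly from the spectral expansion and orthonormality, and both obtain \eqref{vgen} by specializing \eqref{lyap5} to $\pi_{\ii}=n^{-d}$, identifying the first term with $-n^{-d}\langle \Y(t),(I-P^2)\Y(t)\rangle$ and the second with $n^{-d}\LL\EE(t)$. The only cosmetic difference is that the paper expands $I-P^2 = 2(I-P)-(I-P)^2$ at the level of inner products, whereas you factor $I-P^2=(I-P)(I+P)$ and read off the eigenvalues; both yield $2\hld_j-(\hld_j)^2$ and hence the same conclusion.
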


\begin{proof}
The first equality in \eqref{d23.1} follows from \eqref{d19.8} and the observation that
$$
\frac{1}{4dn^d}\sum_{\ii \sim \jj}(Y_{\ii}(t) - Y_{\jj}(t))^2=n^{-d}\langle \Y(t), (I-P)\Y(t)\rangle.
$$ 
Since $\Y(t) = \sum_{j=0}^{n^d-1}n^{d/2}V_j(t)\psi_j$ and $(I-P)\Y(t) = \sum_{j=1}^{n^d-1}n^{d/2}\hld_j V_j(t)\psi_j$, the second equality in \eqref{d23.1} follows.

Note that $\Delta_{\ii}(t) = ((P-I)\Y(t))_{\ii}$ for $\ii \in \T_n^d$ and hence 
$$
n^{-d}\sum_{\ii \in \T_n^d}\Delta_{\ii}^2(t) = n^{-d}\langle (P-I)\Y(t), (P-I)\Y(t)\rangle = n^{-d}\langle \Y(t), (I-P)^2\Y(t)\rangle
$$
for $t \ge 0$. Thus, using \eqref{engeneq}, 
\begin{equation*}%\label{energy2}
\LL \EE(t) = -n^{-d}\sum_{\ii \in \T_n^d}\Delta_{\ii}^2(t) 
= -n^{-d}\langle \Y(t), (I-P)^2\Y(t)\rangle 
= -\sum_{j=1}^{n^d-1}\left(\hld_j\right)^2 V_j^2(t),
\end{equation*}
where the third equality follows by noting that $$(I-P)^2\Y(t) = \sum_{j=1}^{n^d-1}\left(\hld_j\right)^2 V_j(t)\psi_j$$ for $t \ge 0$. This proves \eqref{energy2}. 

The first equality in \eqref{energy3} follows from \eqref{n26.1}-\eqref{d2.1} and the observation that
$$
n^{-d}\sum_{\ii \in \T_n^d}\left(Y_{\ii}(t) - \overline{Y}(t)\right)^2 = \left\langle \sum_{j=1}^{n^d-1}V_j(t) \psi_j, \sum_{j=1}^{n^d-1}V_j(t) \psi_j \right\rangle.
$$
The second equality in \eqref{energy3} follows from the orthonormality of $\{\psi_{j}(\cdot) : 0 \le j \le n^{d} -1\}$.

In view of \eqref{d19.9},
the second term  in \eqref{lyap5} (with $\yy$ replaced by $\Y(t)$) can be written as
$$
n^{-2d}\langle (I-P)\Y(t), (I-P)\Y(t)\rangle = n^{-2d}\langle \Y(t), (I-P)^2\Y(t)\rangle = n^{-2d}\sum_{\ii \in \T_n^d}\Delta_{\ii}^2(t).
$$
Hence, by \eqref{lyap5} and \eqref{d10.3},
\begin{equation}\label{vgenstep}
\LL \Vt(t) = -\EE^{(2)}(t) - n^{-2d}\sum_{\ii \in \T_n^d}\Delta_{\ii}^2(t),
\end{equation}
where $\EE^{(2)}(t) := n^{-d}\langle \Y(t), (I-P^2)\Y(t)\rangle$ is the two-step energy functional.
Note that
\begin{align*}
\EE^{(2)}(t) &= n^{-d}\langle \Y(t), (I-P^2)\Y(t)\rangle \\
&= 2n^{-d}\langle \Y(t), (I-P)\Y(t)\rangle 
- n^{-d}\langle \Y(t), (I-P)^2\Y(t)\rangle\\ 
&= 2n^{-d}\langle \Y(t), (I-P)\Y(t)\rangle - n^{-d}\sum_{\ii \in \T_n^d}\Delta_{\ii}^2(t) = 2\EE(t) + \LL \EE(t),
\end{align*}
where the last equality follows from \eqref{engeneq}.
Thus, using the above observation and \eqref{engeneq} in \eqref{vgenstep}, the variance functional has the following generator:
\begin{equation*}%\label{vgen}
\LL \Vt(t) = -2\EE(t) - (1-n^{-d})\LL \EE(t).
\end{equation*}
\end{proof}

\begin{proof}[Proof of Theorem \ref{wass}]
In view of \eqref{eigbd}, if $ t_0 \le t \le n^2$ then 
$0 \le n^{-2}t/C \le \sg_1d^{-1} t \leq C n^{-2} t \le C$ and, therefore,
\begin{align}\label{d22.6}
1/C\le\exp\left(-\sg_1d^{-1} t\right) \leq 1.
\end{align}
Recall $\beta_2$ from Lemma \ref{kernel}.
We use \eqref{eigbd} to find $C'$ and $n_1$, depending only on $d$, such that for $n\geq n_1$,
\begin{align}\label{j25.1}
2\sg_1 d^{-1} - \frac{4\beta_2}{n^{d+4}} > C' n^{-2} >0.
\end{align}
In the rest of the proof we will always assume that $n_0\geq n_1$.

Using Lemma \ref{energen}, the lower bound in \eqref{enderbd} and \eqref{d22.6}, for $t_0 \le t \le n^2$,
\begin{align*}%\label{wass3}
n^d\mathbb{E}\Et(t) &=
\int_t^{\infty}\mathbb{E}\left(\sum_{\ii \in \T_n^d}\Delta_{\ii}^2(s)\right)ds 
\ge \int_t^{\infty}\frac{C}{s^{\frac{d}{2} + 2} \wedge n^{d+4}} 
\exp\left(-2\sg_1d^{-1} s\right)ds\notag\\
& \ge \exp\left(-4\sg_1 d^{-1} n^2\right)\int_t^{2n^2}\frac{C}{s^{\frac{d}{2} + 2}} ds 
 \ge C\int_t^{2t}\frac{C}{s^{\frac{d}{2} + 2}} ds 
 \ge C t^{-(\frac{d}{2} + 1)}.\notag
\end{align*}
This yields for $n \ge n_0$, $ t_0 \le t \le n^2$,
\begin{align}\label{wass3}
\mathbb{E}\Et(t) & \ge C n^{-d} t^{-(\frac{d}{2} + 1)}
\ge \frac{C n^{-d}}{t^{\frac{d}{2} + 1} \wedge n^{d+2}} 
\exp\left(-2\sg_1d^{-1} t\right).
\end{align}
%which is the lower bound in \eqref{d22.5} for this range of parameters.
For $t > n^2$, using Lemma \ref{energen}, the lower bound in \eqref{enderbd} and \eqref{eigbd},
\begin{align}\label{wass4}
n^d\mathbb{E}\Et(t) &=
\int_t^{\infty}\mathbb{E}\left(\sum_{\ii \in \T_n^d}\Delta_{\ii}^2(s)\right)ds 
\ge \int_t^{\infty} \frac{C}{n^{d+4}} \exp\left(-2\sg_1d^{-1} s\right)ds\\
&= \frac{C}{\sg_1n^{d+4}} \exp\left(-2\sg_1d^{-1} t\right) \ge \frac{C}{n^{d+2}} \exp\left(-2\sg_1d^{-1} t\right)\notag\\
&= \frac{C }{t^{\frac{d}{2} + 1} \wedge n^{d+2}} 
\exp\left(-2\sg_1d^{-1} t\right).\notag
\end{align}
%which is the lower bound in \eqref{d22.5} for this range of parameters.
Now we obtain the upper bound in the energy estimate \eqref{d12.1}. For $n \ge n_0$, $ t_0 \le t \le n^2$, using Lemma \ref{energen}, the upper bound in \eqref{enderbd},  and \eqref{j25.1},
\begin{align}\label{wass4.1}
n^d\mathbb{E}\Et(t) &=
\int_t^{\infty}\mathbb{E}\left(\sum_{\ii \in \T_n^d}\Delta_{\ii}^2(s)\right)ds\\ 
& \le \int_t^{\infty}\frac{C}{s^{\frac{d}{2} + 2} \wedge n^{d+4}} 
\exp\left(-\left(2\sg_1 d^{-1} - \frac{4\beta_2}{n^{d+4}}\right) s\right)ds\nonumber\\
&\le \exp\left(-\left(2\sg_1 d^{-1} - \frac{4\beta_2}{n^{d+4}}\right) t\right)\int_t^{n^2}\frac{C}{s^{\frac{d}{2} + 2}}ds\notag\\
&\qquad + \int_{n^2}^{\infty}\frac{C}{n^{d+4}}\exp\left(-\left(2\sg_1 d^{-1} - \frac{4\beta_2}{n^{d+4}}\right) s\right)ds\notag\\
&\le \frac{C}{t^{\frac{d}{2} + 1}}\exp\left(-\left(2\sg_1 d^{-1} - \frac{4\beta_2}{n^{d+4}}\right) t\right)\notag\\
&\qquad + \frac{C}{n^{d+4}\left(2\sg_1 d^{-1} - \frac{4\beta_2}{n^{d+4}}\right)}\exp\left(-\left(2\sg_1 d^{-1} - \frac{4\beta_2}{n^{d+4}}\right) n^2\right)\notag\\
&\le \frac{C}{t^{\frac{d}{2} + 1}}\exp\left(-\left(2\sg_1 d^{-1} - \frac{4\beta_2}{n^{d+4}}\right) t\right)\nonumber\\
&\qquad + \frac{C}{n^{d+2}}\exp\left(-\left(2\sg_1 d^{-1} - \frac{4\beta_2}{n^{d+4}}\right) n^2\right)\notag\\
&\le \frac{C}{t^{\frac{d}{2} + 1}}\exp\left(-\left(2\sg_1 d^{-1} - \frac{4\beta_2}{n^{d+4}}\right) t\right).\notag
\end{align} 
For $t \ge n^2$, again using Lemma \ref{energen}, the upper bound in \eqref{enderbd} and \eqref{j25.1},
\begin{align}\label{wass4.2}
n^d\mathbb{E}\Et(t) &=
\int_t^{\infty}\mathbb{E}\left(\sum_{\ii \in \T_n^d}\Delta_{\ii}^2(s)\right)ds\\ 
& \le \int_t^{\infty}\frac{C}{s^{\frac{d}{2} + 2} \wedge n^{d+4}} 
\exp\left(-\left(2\sg_1 d^{-1} - \frac{4\beta_2}{n^{d+4}}\right) s\right)ds\nonumber\\
&= \int_t^{\infty}\frac{C}{n^{d+4}} 
\exp\left(-\left(2\sg_1 d^{-1} - \frac{4\beta_2}{n^{d+4}}\right) s\right)ds\notag\\
&= \frac{C}{n^{d+4}\left(2\sg_1 d^{-1} - \frac{4\beta_2}{n^{d+4}}\right)}\exp\left(-\left(2\sg_1 d^{-1} - \frac{4\beta_2}{n^{d+4}}\right) t\right)\notag\\
&\le \frac{C}{n^{d+2}}\exp\left(-\left(2\sg_1 d^{-1} - \frac{4\beta_2}{n^{d+4}}\right) t\right).\notag
\end{align}
The energy bounds in \eqref{d12.1} with $\alpha_3 := 4\beta_2$ thus follow from \eqref{wass3}, \eqref{wass4}, \eqref{wass4.1} and \eqref{wass4.2}.

Now we obtain the variance bounds in \eqref{d12.2}.
To obtain the lower bound in \eqref{d12.2}, note that by the Cauchy-Schwarz inequality, \eqref{d23.1}, \eqref{energy2} and \eqref{energy3}, for any $t \ge 0$,
\begin{align}\label{Nash1}
\Et(t) &= \sum_{j=1}^{n^d-1}\hld_j V_j^2(t) \le \left(\sum_{j=1}^{n^d-1}\left(\hld_j\right)^2 V_j^2(t)\right)^{1/2}\left(\sum_{j=1}^{n^d-1} V_j^2(t)\right)^{1/2}\\
 &= \left(n^{-d}\sum_{\ii \in \T_n^d}\Delta_{\ii}^2(t)\right)^{1/2}\Vt(t)^{1/2}.\notag
\end{align}
Taking expectations in \eqref{Nash1} and applying the Cauchy-Schwarz inequality again, we obtain
\begin{align*}
\mathbb{E}\Et(t)& \le \mathbb{E}\left[\left(n^{-d}\sum_{\ii \in \T_n^d}\Delta_{\ii}^2(t)\right)^{1/2}\Vt(t)^{1/2}\right] 
\le \left(n^{-d}\mathbb{E}
\sum_{\ii \in \T_n^d}\Delta_{\ii}^2(t)
\right)^{1/2}
\left(\mathbb{E}\Vt(t)\right)^{1/2}.
%&= \left(-\frac{d}{dt}\mathbb{E}\Et(t)\right)^{1/2}\left(\mathbb{E}\Vt(t)\right)^{1/2}.
\end{align*}
Hence, for any $t \ge 0$,
\begin{equation}\label{vlb}
\mathbb{E}\Vt(t) \ge \frac{\left(\mathbb{E}\Et(t)\right)^2}{n^{-d}\mathbb{E}
\sum_{\ii \in \T_n^d}\Delta_{\ii}^2(t)}.
\end{equation}
Using the lower bound in \eqref{d12.1} and the upper bound in \eqref{enderbd} in \eqref{vlb}, we obtain for $n \ge n_0$, $t \ge t_0$,
\begin{align}\label{varl}
\mathbb{E}\Vt(t) 
&\ge  \frac{\displaystyle\frac{Cn^{-2d}}{t^{d+2} \wedge n^{2d+4}}
\exp\left(-4\sg_1 d^{-1} t\right)}
{\displaystyle\frac{Cn^{-d}}{t^{\frac{d}{2} + 2} \wedge n^{d+4}}\exp\left(-\left(2\sg_1 d^{-1} - \frac{4\beta_2}{n^{d+4}}\right) t\right)}\\
& = \frac{Cn^{-d}}{t^{\frac{d}{2}} \wedge n^{d}}\exp\left(-\left(2\sg_1 d^{-1} + \frac{4\beta_2}{n^{d+4}}\right) t\right).\notag
\end{align}
This is the lower bound in \eqref{d12.2}.

Using the energy bounds \eqref{d12.1} in place of the bounds on $\mathbb{E}\left(\sum_{\ii \in \T_n^d}\Delta_{\ii}^2(t)\right)$ obtained in \eqref{enderbd}, calculations similar to  \eqref{wass4.1} and \eqref{wass4.2} show that there exist positive constants $C_2, t_0$ and $n_0 \ge 3$ (depending on $d$) such that for all $n \ge n_0$, $t \ge t_0$,
\begin{align}\label{j15.2}
 \int_{t}^{\infty}\mathbb{E}\Et(s)ds \le \frac{C_2n^{-d}}{t^{\frac{d}{2}} \wedge n^{d}}\exp\left(-\left(2\sg_1 d^{-1} - \frac{4\beta_2}{n^{d+4}}\right) t\right).
\end{align}
To obtain the upper bound for the variance in \eqref{d12.2}, we use \eqref{vgen} and  \eqref{j15.2} to obtain for $n \ge n_0$, $t \ge t_0$,
\begin{align}\label{varu}
\mathbb{E}\Vt(t) 
&= 2\int_{t}^{\infty}\mathbb{E}\Et(s)ds - (1-n^{-d}) \mathbb{E}\Et(t)\\
&\leq 2\int_{t}^{\infty}\mathbb{E}\Et(s)ds \leq \frac{2C_2n^{-d}}{t^{\frac{d}{2}} \wedge n^{d}}\exp\left(-\left(2\sg_1 d^{-1} - \frac{4\beta_2}{n^{d+4}}\right) t\right).\notag
\end{align}
The bounds in \eqref{d12.2} with $\alpha_3 := 4\beta_2$ follow from \eqref{varl} and \eqref{varu}. This proves the theorem.
\end{proof}

\begin{theorem}\label{locsmooth}
Fix $\ii \in \T_n^d$. Recall that  $\{\YY^{(\ii)}(t) : t \ge 0\}$ denotes the smoothing process with starting configuration $Y^{(\ii)}_{\xx}(0) = \delta_{\ii}(\xx)$ for $\xx \in \T_n^d$. Recall the constants $\alpha_1, \alpha_2$ and $\alpha_3$ from Theorem \ref{wass}.

(i)
There exist  constants  $n_0 \ge 3$, $t_0 >0$ (depending on $d$) such that for all $n \ge n_0$, $t \ge t_0$,
\begin{align*}
\frac{\alpha_1}{t^{\frac{d}{2} + 1} \wedge n^{d+2}} 
\exp\left(-2\sg_1d^{-1} t\right)  
&\le \mathbb{E}\left[
\left(\sum_{\jj \in \T_n^d} Y^{(\ii)}_{\jj}(t) - |\T_n^d | \  Y^{(\ii)}(\infty)\right)^2\right] \\
&\le \frac{\alpha_2}{t^{\frac{d}{2} + 1} \wedge n^{d + 2}} 
\exp\left(-\left(2\sg_1 d^{-1} - \frac{\alpha_3}{n^{d+4}}\right) t\right).
\end{align*}

(ii) Consider a family of non-negative random variables $\{X_{\jj} : \jj \in \mathbb{Z}^d\}$, independent of $\YY^{(\ii)}$, for which $\operatorname{Cov}\left(X_{\jj}, X_{\kk}\right) = 0$ for $\jj \neq \kk$ and $\mathbb{E}(X_{\jj}) = \mu < \infty$, $\operatorname{Var}(X_{\jj}) = \sigma^2 \in (0,\infty)$ for all $\jj \in \mathbb{Z}^d$. There exist  $\eta_1>0$ and $\eta_2>0$ (depending on $\mu$, $\sigma$ and $d$), and $n'_0 \ge 3$ and $t'_0 >0$ (depending on $d$ only) such that for all $n \ge n'_0$, $t \ge t'_0$,
\begin{align*}
\frac{\eta_1}{t^{\frac{d}{2}} \wedge n^{d}}\exp\left(-\left(2\sg_1 d^{-1} + \frac{\alpha_3}{n^{d+4}}\right) t\right) 
&\le  \mathbb{E}\left[\left(\sum_{\jj \in \T_n^d} X_{\jj}Y^{(\ii)}_{\jj}(t) - Y^{(\ii)}(\infty)\sum_{\jj \in \T_n^d} X_{\jj}\right)^2 \right]\\
&\le \frac{\eta_2}{t^{\frac{d}{2}} \wedge n^{d}}\exp\left(-\left(2\sg_1 d^{-1} - \frac{\alpha_3}{n^{d+4}}\right) t\right).
\end{align*}

\end{theorem}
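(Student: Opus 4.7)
Both parts reduce to Theorem \ref{wass} via a single martingale identity for the spatial average. Write $\overline{Y}(t)=n^{-d}\sum_\jj Y^{(\ii)}_\jj(t)$. Because the transition kernel for the simple random walk on $\T_n^d$ is doubly stochastic, applying the generator $\LL$ of the smoothing process (see \eqref{d19.7}) to the linear functional $\overline{Y}$ yields $\LL\overline{Y}=n^{-d}\sum_\kk \Delta_\kk(t)=0$ (the last equality is discrete integration by parts on the torus), so $\overline{Y}$ is a martingale. Since $0\le \overline{Y}(t)\le \max_\jj Y^{(\ii)}_\jj(0)=1$, this martingale is bounded and hence converges in $L^2$ to $\overline{Y}(\infty)=Y^{(\ii)}(\infty)$. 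A direct computation of $\LL(\overline{Y}^2)$, using $\sum_\kk\Delta_\kk=0$ to kill the cross term, gives $\LL(\overline{Y}^2)(\YY(t))=n^{-2d}\sum_\kk \Delta_\kk^2(t)$. Integrating from $t$ to $\infty$, taking expectations, and applying Lemma \ref{energen} yields the key identity
\begin{equation}\label{pkey}
\mathbb{E}\left[(\overline{Y}(t)-Y^{(\ii)}(\infty))^2\right] = n^{-2d}\int_t^\infty \mathbb{E}\sum_\jj\Delta_\jj^2(s)\,ds = n^{-d}\mathbb{E}\Et(t).
\end{equation}

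\emph{Part (i).} Since $\sum_\jj Y^{(\ii)}_\jj(t)-|\T_n^d|Y^{(\ii)}(\infty)=n^d(\overline{Y}(t)-\overline{Y}(\infty))$, the identity \eqref{pkey} gives the target expectation exactly $n^d\mathbb{E}\Et(t)$. Translation invariance on the torus shows this is independent of $\ii$ and equals the corresponding quantity for $\YY^{(\zero)}$, to which Theorem \ref{wass} directly applies (after multiplying the bounds on $\mathbb{E}\Et(t)$ by $n^d$).

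\emph{Part (ii).} By independence of $\{X_\jj\}$ from $\YY^{(\ii)}$ and the covariance structure $\mathbb{E}[X_\jj X_\kk]=\mu^2+\sigma^2\delta_{\jj\kk}$, the target expectation splits as
\begin{equation*}
\mu^2\,\mathbb{E}\!\left[\Bigl(\sum_\jj Y^{(\ii)}_\jj(t)-n^d Y^{(\ii)}(\infty)\Bigr)^2\right] + \sigma^2\sum_\jj\mathbb{E}\!\left[(Y^{(\ii)}_\jj(t)-Y^{(\ii)}(\infty))^2\right].
\end{equation*}
The first term equals $\mu^2 n^d\mathbb{E}\Et(t)$ by Part (i). For the second, decomposing $Y^{(\ii)}_\jj(t)-Y^{(\ii)}(\infty)=(Y^{(\ii)}_\jj(t)-\overline{Y}(t))+(\overline{Y}(t)-Y^{(\ii)}(\infty))$, squaring, summing over $\jj$, using $\sum_\jj(Y^{(\ii)}_\jj(t)-\overline{Y}(t))=0$, and applying \eqref{pkey} to the second piece gives $n^d\mathbb{E}\Vt(t)+\mathbb{E}\Et(t)$. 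So the full expression is
\begin{equation*}
\mu^2 n^d\mathbb{E}\Et(t)+\sigma^2 n^d\mathbb{E}\Vt(t)+\sigma^2\mathbb{E}\Et(t).
\end{equation*}
The lower bound comes from the single term $\sigma^2 n^d\mathbb{E}\Vt(t)$ via the lower bound in \eqref{d12.2}. For the upper bound, observe that $t^{d/2+1}\wedge n^{d+2}\ge t^{d/2}\wedge n^d$ for $t\ge 1$, so both $\mathbb{E}\Et(t)$-terms are of lower order than $\mathbb{E}\Vt(t)$; all three contributions are jointly bounded by $\eta_2(t^{d/2}\wedge n^d)^{-1}\exp(-(2\sg_1 d^{-1}-\alpha_3 n^{-d-4})t)$ for a suitable $\eta_2=\eta_2(\mu,\sigma,d)$.

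\emph{Main obstacle.} The delicate step is establishing \eqref{pkey} rigorously, which requires verifying that $\overline{Y}$ is a martingale with the stated quadratic variation and that the $L^2$ passage to $t=\infty$ is valid. Once \eqref{pkey} is in hand, everything else is elementary algebra that expresses the target second moments in terms of $\mathbb{E}\Et(t)$ and $\mathbb{E}\Vt(t)$, both controlled by Theorem \ref{wass}.
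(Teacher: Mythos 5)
Your proof is correct and follows essentially the same route as the paper. The one cosmetic difference is that the paper obtains the martingale property and quadratic variation of $U(t)=\sum_{\jj}Y^{(\ii)}_{\jj}(t)$ by taking $f\equiv 1$ in Lemma \ref{heatmart}, whereas you rederive both from the generator formulas $\LL\overline{Y}=0$ and $\LL(\overline{Y}^2)=n^{-2d}\sum_{\kk}\Delta_{\kk}^2$ and the boundedness of $\overline{Y}$; both paths lead to the identity $\mathbb{E}[(\overline{Y}(t)-\overline{Y}(\infty))^2]=n^{-d}\mathbb{E}\Et(t)$ via Lemma \ref{energen}, and the algebraic reduction of part (ii) to $\sigma^2 n^d\mathbb{E}\Vt(t)+(\mu^2+\sigma^2 n^{-d})\cdot n^d\mathbb{E}\Et(t)$ matches the paper's decomposition exactly.
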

\begin{proof}
(i) Let $U(t) = \sum_{\jj \in \T_n^d} Y^{(\ii)}_{\jj}(t)$. Taking $f \equiv 1$ in Lemma \ref{heatmart}, it follows that $U$ is a martingale with predictable quadratic variation given by
$
\langle U\rangle (t) = \int_0^t\sum_{\jj \in \T_n^d} \Delta^2_{\jj}(s)ds.
$
Hence, by Lemma \ref{energen},
$$
\mathbb{E}(U(t) - U(\infty))^2 = \int_t^{\infty}\mathbb{E}\left(\sum_{\jj \in \T_n^d}\Delta_{\jj}^2(s)\right)ds = n^{d}\mathbb{E}\Et(t).
$$
The result now follows from \eqref{d12.1}.

(ii) Note that
\begin{align*}
\mathbb{E}&\left[\left(\sum_{\jj \in \T_n^d} X_{\jj}Y^{(\ii)}_{\jj}(t) - Y^{(\ii)}(\infty)\sum_{\jj \in \T_n^d} X_{\jj}\right)^2\right]\\
& = \sigma^2\mathbb{E}\left(\sum_{\jj \in \T_n^d}(Y^{(\ii)}_{\jj}(t) - Y^{(\ii)}(\infty))^2\right) 
+ \mu^2 \mathbb{E}\left[\left(\sum_{\jj \in \T_n^d} Y^{(\ii)}_{\jj}(t) - |\T_n^d | \  Y^{(\ii)}(\infty)\right)^2\right]\\
& = \sigma^2\mathbb{E}\left(\sum_{\jj \in \T_n^d}(Y^{(\ii)}_{\jj}(t) - \overline{Y^{(\ii)}}(t))^2\right)\\ 
&\qquad + (\mu^2+ \sigma^2n^{-d}) \mathbb{E}\left[
\left(\sum_{\jj \in \T_n^d} Y^{(\ii)}_{\jj}(t) - |\T_n^d | \  Y^{(\ii)}(\infty)\right)^2\right]\\
&  = \sigma^2n^d \mathbb{E}\Vt(t) + (\mu^2+ \sigma^2n^{-d}) \mathbb{E}\left[ \left(\sum_{\jj \in \T_n^d} Y^{(\ii)}_{\jj}(t) - |\T_n^d | \  Y^{(\ii)}(\infty)\right)^2\right].
\end{align*}
The result now follows from part (i) and \eqref{d12.2}.
\end{proof}

\begin{remark}\label{vardif}
The upper bound in part (ii) of Theorem \ref{locsmooth} holds under the weaker assumption that the random variables $\{X_{\jj} : \jj \in \mathbb{Z}^d\}$ satisfy $\operatorname{Cov}\left(X_{\jj}, X_{\kk}\right) = 0$ for $\jj \neq \kk$,  $\mathbb{E}(X_{\jj}) = \mu < \infty$ for all $\jj \in \mathbb{Z}^d$, and $\sigma^2 := \sup_{\jj \in \mathbb{Z}^d}\operatorname{Var}(X_{\jj}) < \infty$. This can be shown by replacing  the first ``$=$'' with ``$\le$'' in the proof of part (ii).
\end{remark}

\begin{proof}[Proof of Theorem \ref{avgquick}]
Recall definitions \eqref{d25.1} and \eqref{d25.2}, Remark \ref{d26.5} and notation from 
Theorem \ref{avgquick}. In particular, recall 
 the collection of Poisson processes $\mathcal{P}$ and the family of processes $\{Y^{(\ii)}(\,\cdot\,), \ii \in \T_n^d\}$. Define $Z^{(\ii)}(\,\cdot\,) = \sum_{\jj \in \T_n^d} Y^{(\ii)}_{\jj}(\,\cdot\,)$, $\ii \in \T_n^d$. Then, using the linearity of the smoothing process, the processes $\YY$ and $\overline{Y}$ have the following representations,
\begin{align*}
Y_{\jj}(\,\cdot\,) = \sum_{\ii \in \T_n^d} y_\ii Y^{(\ii)}_{\jj}(\,\cdot\,) \text{ for } \jj \in \T_n^d,
\qquad 
\overline{Y}(\,\cdot\,) = n^{-d}\sum_{\ii \in \T_n^d}y_{\ii}Z^{(\ii)}(\,\cdot\,).
\end{align*}
 Note that the smoothing process is constant in time if the initial mass at every vertex of the torus is the same. Thus, $n^{-d}\sum_{\ii \in \T_n^d}Z^{(\ii)}(t) = 1$ for all $t\geq 0$. Using this observation along with linearity of the smoothing process, we can write $\overline{Y}(\,\cdot\,) = y^* + n^{-d}\sum_{\ii \in \T_n^d}(y_{\ii} - y^*)Z^{(\ii)}(\,\cdot\,)$. Thus, for any $t \ge 0$, using the Cauchy-Schwarz inequality,
\begin{align*}
\mathbb{E}&\left[\left(\overline{Y}(t) - \overline{Y}(\infty)\right)^2 \right]
= \mathbb{E}\left[\left(n^{-d}\sum_{\ii \in \T_n^d}(y_{\ii} - y^*)(Z^{(\ii)}(t) - Z^{(\ii)}(\infty))\right)^2\right]\\
& \le \mathbb{E}\left[\left(n^{-d}\sum_{\ii \in \T_n^d}|y_{\ii} - y^*|\right)\left(n^{-d}\sum_{\ii \in \T_n^d}|y_{\ii} - y^*|(Z^{(\ii)}(t) - Z^{(\ii)}(\infty))^2\right)\right]\\
& = \mathbb{E}\left[\left(n^{-d}\sum_{\ii \in \T_n^d}|y_{\ii} - y^*|\right)^2 \right]
\mathbb{E}\left[\left(Z^{(\zero)}(t) - Z^{(\zero)}(\infty)\right)^2\right],
\end{align*}
where we have used the independence of $\mathcal{P}$ and the random variables $\{y_{\ii}, \ii \in \T_n^d\}$, and the invariance in law of the process $Z^{(\ii)}(\,\cdot\,)$ with respect to translations of $\ii$ on the torus $\T_n^d$, to obtain the last equality. The theorem now follows from the upper bound in part (i) of Theorem \ref{locsmooth}.
\end{proof}

\begin{proof}[Proof of Theorem \ref{metone}]
The proof of Theorem \ref{meteorconv} 
applies to any finite state space $\mathcal{I}$, so we can apply it to $\mathcal{I} = \T_n^d$. We repeat the steps of that proof
up to and including the first two lines of \eqref{d25.3} to obtain,
\begin{align*}%\label{d25.3}
\left[d^{(2)}_W(\DD(\XX(t)),\DD(\XX(\infty)))\right]^2
& \le \mathbb{E}\sum_{\ii\in \T_n^d}\left| \sum_{\jj\in \T_n^d} X_\jj(0)Y^{(\ii)}_\jj(t) - Y^{(\ii)}(\infty)\sum_{\jj\in \T_n^d} X_\jj(0)\right|^2.
\end{align*}
This, an application of the upper bounds in Theorem \ref{locsmooth} (i) and (ii) and Remark \ref{vardif} yield \eqref{d26.2} and \eqref{d26.4}.

The proof of Theorem \ref{meteorconv} also shows that the following version of \eqref{d25.3} holds for all $\ii$ 
(but note that the role of $n$ in Theorem \ref{meteorconv} is now played by $n^d$),
\begin{align*}%\label{d25.3}
\left[d^{(2)}_W(\DD(X_\ii(t)),\DD(X_\ii(\infty)))\right]^2
& \le \mathbb{E}\left| \sum_{\jj\in \T_n^d} X_\jj(0)Y^{(\ii)}_\jj(t) - Y^{(\ii)}(\infty)\sum_{\jj\in \T_n^d} X_\jj(0)\right|^2.
\end{align*}
Once again, we can combine this observation with an application of the upper bounds in Theorem \ref{locsmooth} (i) and (ii) and Remark \ref{vardif} to obtain \eqref{d26.1} and \eqref{d26.3}.
\end{proof}

\begin{proof}[Proof of Theorem \ref{meteorzd}] The weak convergence claim of the theorem was proved in \cite{liggett1981ergodic}. 

Recall $\{\Y^{(\ii)}(\cdot) : \ii \in \mathbb{Z}^d\}$ and  $\wt \XX(t)$ from Remark \ref{d26.5}. In particular, we have $\wt X_{\ii}(t) = \sum_{j \in \mathbb{Z}^d} X_{\jj}Y^{(\ii)}_{\jj}(t)$, for $t \ge 0$, $\ii \in \mathbb{Z}^d$. 

We will suppress the superscript $\zero$ and write $\YY$ for $\YY^{(\zero)}$. For any odd integer $n \ge 3$, denote by $\YY^{n}$ the smoothing process on the torus, identified with  $\mathbb{B}^d_n$ (defined in \eqref{d27.1}), and coupled in the natural way with $\YY$: use the same Poisson processes of clock rings driving $\YY$ at the sites of $\mathbb{B}^d_n$ to construct $\YY^n$. We assume that $Y^n_\jj(0) = \delta_\zero(\jj)$ for $\jj \in \mathbb{B}^d_n$ and $Y^n_\jj(t) = 0$ if $\jj \notin \mathbb{B}^d_n$ and $t\geq 0$. We will denote the variance and energy functionals of the process $\YY^n$ by $\Vt^n$ and $\EE^n$, respectively (see \eqref{d2.1} and \eqref{d19.8}). 

A standard application of the ``graphical method'' introduced in \cite{Har78} shows that for every $t>0$ there exists a finite random set $\A_t$ such that  for all $\ii\notin \A_t$, a.s.,
\begin{align}\label{d28.2}
\sup_{0\leq s \leq t} Y_{\ii}(s)= 0.
\end{align}
For an implementation of the graphical method that proves a result on the meteor (potlatch) process that is essentially  ``dual'' to our claim, see  \cite[Prop. 2.1]{burdzymeteor}.
It follows that for every $t\geq 0$
there exists a random $K < \infty$ such that $\A_t \subset \mathbb{B}^d_n$ and, therefore, $\Y = \Y^n$, for $n\geq K$. 
Hence, for every $t\geq 0$
 and $\jj \in \mathbb{Z}^d$,
\begin{align}\label{d28.1}
Y_{\jj}^n(t) \to Y_{\jj}(t), \qquad \text{a.s., as } \ n \rightarrow \infty.
\end{align}

Write $\sigma^2 = \sup_{\jj \in \mathbb{Z}^d}\operatorname{Var}(X_{\jj})$. By \eqref{d28.1} and Fatou's Lemma, for any $0 \le s \le t$,
\begin{align}\label{d28.3}
\mathbb{E}&\left[\left(\sum_{\jj \in \mathbb{Z}^d}X_{\jj}(Y_{\jj}(t) - Y_{\jj}(s))\right)^2\right]\\ 
&\le \sigma^2\mathbb{E}\left(\sum_{\jj \in \mathbb{Z}^d}(Y_{\jj}(t) -Y_{\jj}(s))^2\right)
+ \mu^2\mathbb{E}\left[\left(\sum_{\jj \in \mathbb{Z}^d}Y_{\jj}(t) - \sum_{\jj \in \mathbb{Z}^d}Y_{\jj}(s)\right)^2\right]\notag\\
&\le \sigma^2 \liminf_{n \rightarrow \infty} 
 \mathbb{E}\left(\sum_{\jj \in \mathbb{Z}^d}(Y^n_{\jj}(t) -Y^n_{\jj}(s))^2\right)\notag\\
&\qquad + \mu^2 \liminf_{n \rightarrow \infty} 
\mathbb{E}\left[\left(\sum_{\jj \in \mathbb{Z}^d}Y^n_{\jj}(t) - \sum_{\jj \in \mathbb{Z}^d}Y^n_{\jj}(s)\right)^2\right]\notag\\
&\le \sigma^2 \limsup_{n \rightarrow \infty} 
 \mathbb{E}\left(\sum_{\jj \in \mathbb{Z}^d}(Y^n_{\jj}(t) -Y^n_{\jj}(s))^2\right)\notag\\
&\qquad + \mu^2 \limsup_{n \rightarrow \infty} 
\mathbb{E}\left[\left(\sum_{\jj \in \mathbb{Z}^d}Y^n_{\jj}(t) - \sum_{\jj \in \mathbb{Z}^d}Y^n_{\jj}(s)\right)^2\right]\notag\\
&= \sigma^2
\limsup_{n \rightarrow \infty} \mathbb{E}\left(\sum_{\jj \in \mathbb{B}_n^d}(Y^n_{\jj}(t) -Y^n_{\jj}(s))^2\right)\notag\\
&\qquad + \mu^2
\limsup_{n \rightarrow \infty}
\mathbb{E}\left[\left(\sum_{\jj \in \mathbb{B}_n^d}Y^n_{\jj}(t) - \sum_{\jj \in \mathbb{B}_n^d}Y^n_{\jj}(s)\right)^2\right]\notag.
\end{align}
Note that all the sums of the form $\sum_{\jj \in \mathbb{Z}^d}$ in the above formula are in fact finite sums, in view of \eqref{d28.2}.

For any $n \ge 3$ and $0 \le s \le t$, 
\begin{align}\label{d28.4}
\mathbb{E}&\left(\sum_{\jj \in \mathbb{B}_n^d}(Y_{\jj}^n(t) - Y_{\jj}^n(s))^2\right)\\
& \le 3\mathbb{E}\left(\sum_{\jj \in \mathbb{B}_n^d}(Y_{\jj}^n(t) -\overline{Y}^n(t))^2\right)
+ 3\mathbb{E}\left(\sum_{\jj \in \mathbb{B}_n^d}(Y_{\jj}^n(s) -\overline{Y}^n(s))^2\right)\notag\\
&\qquad +3 n^d\mathbb{E}\left[\left(\overline{Y}^n(t) -\overline{Y}^n(s)\right)^2\right]\nonumber\\
&= 3\mathbb{E}\left(\sum_{\jj \in \mathbb{B}_n^d}(Y_{\jj}^n(t) -\overline{Y}^n(t))^2\right)
+ 3\mathbb{E}\left(\sum_{\jj \in \mathbb{B}_n^d}(Y_{\jj}^n(s) -\overline{Y}^n(s))^2\right)\nonumber\\
&\qquad  + 3n^{-d}\mathbb{E}
\left[\left(\sum_{\jj \in \mathbb{B}_n^d}Y_{\jj}^n(t) - \sum_{\jj \in \mathbb{B}_n^d}Y_{\jj}^n(s)\right)^2\right].\nonumber
\end{align}

Recall the constant $\alpha_3$ defined in Theorem \ref{wass}. We use \eqref{j25.1} and the observation that $\sup_{0< x< \infty}\frac{1}{1 \wedge x^{-d/2}} 
\exp\left(-C x\right)< \infty$ to find $n_0 \ge 3$ such that for all $n \ge n_0$, $t >0$,
\begin{align}\label{d27.5}
\frac{\alpha_2 n^{-d}}{t^{\frac{d}{2}} \wedge n^{d}} 
\exp\left(-\left(2\sg_1 d^{-1} - \frac{\alpha_3}{n^{d+4}}\right) t\right)
&\leq
\frac{\alpha_2 n^{-d}}{t^{\frac{d}{2}} \wedge n^{d}} 
\exp\left(-C t/n^2\right)\\
&= t^{-d/2} \frac{\alpha_2 n^{-d}}{1 \wedge (n^2/t)^{d/2}} 
\exp\left(-C t/n^2\right)\notag\\
& \leq C t^{-d/2}  n^{-d}.\notag
\end{align}
This and  the (uniform in $n$) upper bound in  \eqref{d12.2} show that there exist positive constants $C, t_0$ such that for all $t > s \ge t_0$,
\begin{align}\label{zd2}
\limsup_{n \rightarrow \infty} & \left[\mathbb{E}\left(\sum_{\jj \in \mathbb{B}_n^d}(Y_{\jj}^n(t) -\overline{Y}^n(t))^2\right)
+ \mathbb{E}\left(\sum_{\jj \in \mathbb{B}_n^d}(Y_{\jj}^n(s) -\overline{Y}^n(s))^2\right) \right]\\
&=\limsup_{n \rightarrow \infty}\left[n^d\mathbb{E}\Vt ^n(t) + n^d\mathbb{E}\Vt ^n(s)\right] \le Cs^{-d/2}.\notag
\end{align}

By taking $f \equiv 1$ in Lemma \ref{heatmart} and using Lemma \ref{energen}, 
\begin{equation*}%\label{zdnew0}
\mathbb{E}\left[\left(\sum_{\jj \in \mathbb{B}_n^d}Y^n_{\jj}(t) - \sum_{\jj \in \mathbb{B}_n^d}Y^n_{\jj}(s)\right)^2 \right]
= n^d\mathbb{E}\EE^n(s) - n^d\mathbb{E}\EE^n(t). 
\end{equation*} 
From this, an estimate analogous to \eqref{d27.5}, and the upper bound in \eqref{d12.1}, we  obtain positive constants $C, t_0$ (without loss of generality, $C,t_0$ can be chosen to be same as for \eqref{zd2}) such that for all $t > s \ge t_0$,
\begin{align}\label{zdnew3}
\limsup_{n \rightarrow \infty} \mathbb{E}
\left[\left(\sum_{\jj \in \mathbb{B}_n^d}Y^n_{\jj}(t) - \sum_{\jj \in \mathbb{B}_n^d}Y^n_{\jj}(s)\right)^2\right]
& = \limsup_{n \rightarrow \infty}\left[n^d\mathbb{E}\EE^n(s) - n^d\mathbb{E}\EE^n(t)\right]\\
& \le Cs^{-\frac{d}{2}-1}.\notag
\end{align}
From \eqref{d28.4}, \eqref{zd2} and \eqref{zdnew3}, we thus conclude for all $t > s \ge t_0$,
\begin{equation}\label{zdnew4}
\limsup_{n \rightarrow \infty} \mathbb{E}\left(\sum_{\jj \in \mathbb{B}_n^d}(Y_{\jj}^n(t) - Y_{\jj}^n(s))^2\right) \le Cs^{-d/2}.
\end{equation}
Thus, using \eqref{d28.3}, \eqref{zdnew3} and \eqref{zdnew4}, we obtain the following bound for all $t > s \ge t_0$,
\begin{equation}\label{zd5}
\mathbb{E}\left[\left(\sum_{\jj \in \mathbb{Z}^d}X_{\jj}(Y_{\jj}(t) - Y_{\jj}(s))\right)^2 \right]
\le C\sigma^2s^{-d/2} + C\mu^2s^{-\frac{d}{2}-1}.
\end{equation}
Recall that $\wt X_{\0}(t) = \sum_{\jj \in \mathbb{Z}^d}X_{\jj}Y_{\jj}(t)$. From \eqref{zd5}, we can obtain an increasing sequence $t_k \rightarrow \infty$ such that
$$
\mathbb{E}\left[\left(\wt  X_{\0}(t_{k+1}) - \wt  X_{\0}(t_k)\right)^2\right]
 \le 2^{-k} \ \text{ for all } k \ge 1.
$$
Thus, $\wt  X_{\0}(t_k)$ is Cauchy in $L^2$ and hence $\wt  X_{\0}(t_k) \rightarrow \wt  X_{\0}(\infty)$ in $L^2$ as $k \rightarrow \infty$. By Theorem 4.1 of \cite{burdzymeteor}, $\wt  X_{\0}(t_k)$ converges weakly to $X_{\0}(\infty)$ and hence, $\wt  X_{\0}(\infty)$ and $X_{\0}(\infty)$ have the same distribution. From \eqref{zd5}, for any $t \ge t_0$ and $k$ such that $t_k \ge t$,
$$
\mathbb{E}\left[\left(\wt  X_{\0}(t) - \wt  X_{\0}(t_k)\right)^2\right]
 \le C\sigma^2t^{-d/2} + C\mu^2t^{-\frac{d}{2}-1}.
$$
Taking $k \rightarrow \infty$ in the above,
\begin{align}\label{d28.7}
\mathbb{E}\left[\left(\wt  X_{\0}(t) - \wt  X_{\0}(\infty)\right)^2\right] \le C\sigma^2t^{-d/2} + C\mu^2t^{-\frac{d}{2}-1}.
\end{align}

Recalling that $\wt  X_{\0}(t), \wt  X_{\0}(\infty)$ have the same distributions as $X_{\0}(t), X_{\0}(\infty)$, respectively, we obtain
$$
\left[d^{(2)}_W(\DD(X_{\0}(t)),\DD(X_{\0}(\infty)))\right]^2 \le  C\sigma^2t^{-d/2} + C\mu^2t^{-\frac{d}{2}-1}.
$$
This proves the first Wasserstein bound of the theorem for $\phi(\xx) = x_{\0}$. The claimed bound for $X_{\jj} \equiv 1$  follows upon taking $\sigma = 0$ above.

Now consider general $\phi$ which is Lipschitz with Lipschitz constant $\rho$ defined in \eqref{d28.8}, and with support contained in $\mathbb{B}_N^d$ for some odd $N  \ge 3$. The constants in the proof of \eqref{d28.7} do not depend on the special choice of coordinate $\0$ so
$$
\sup_{\ii \in \mathbb{Z}^d}\mathbb{E}
\left[\left(\wt  X_{\ii}(t) - \wt  X_{\ii}(\infty)\right)^2\right]
 \le  C\sigma^2t^{-d/2} + C\mu^2t^{-\frac{d}{2}-1}.
$$
Hence,
\begin{align*}
\mathbb{E}\left[\left(\phi(\wt  \XX(t)) - \phi(\wt  \XX(\infty))\right)^2\right]
 &\le \rho^2 \sum_{\ii \in \mathbb{B}_N^d}\mathbb{E}
 \left[\left(\wt  X_{\ii}(t) - \wt  X_{\ii}(\infty)\right)^2\right]\\
&\le \rho^2 N^{d}\left(C\sigma^2t^{-d/2} + C\mu^2t^{-\frac{d}{2}-1}\right),
\end{align*}
from which the bounds for general $\phi$ follow.
\end{proof}

\bibliographystyle{alpha}
\bibliography{NSFbib1}
\end{document}